\newtheorem*{thm-class}{Theorem \ref{thm-statecycle-classification}}
\newtheorem*{thm-lifting-qp}{Theorem \ref{lifting-q-p}}
\newtheorem*{thm-lifting-lee}{Theorem \ref{lifting-lee}}
\newtheorem{thm}{Theorem}[section]
\newtheorem{lemma}[thm]{Lemma}
\newtheorem{prop}[thm]{Proposition}
\newtheorem{defn}[thm]{Definition}
\newtheorem{cor}[thm]{Corollary}
\newtheorem{conj}{Conjecture}
\theoremstyle{remark}
\newtheorem{example}{Example}[section]
\newtheorem{remark}{Remark}[thm]
\newtheorem*{induction}{Induction Hypothesis}
\newtheorem{claim}{Claim}[thm]
\newcommand{\Q}{{\mathbb{Q}}}
\newcommand{\sz}{{\mathfrak{s}_0}}
\newcommand{\Kh}{{Kh^{i,j}}}
\DeclareMathOperator{\Span}{Span}
\DeclareMathOperator{\im}{im}
\definecolor{orange}{rgb}{1,0.84,0}
\definecolor{beige}{rgb}{0.93,0.87,.51}
\newcommand{\cy}{\cellcolor{yellow}}
\newcommand{\co}{\cellcolor{orange}}
\newcommand{\cb}{\cellcolor{beige}}
\newcolumntype{q}{>{\bfseries}c}
\title[State cycles, quasipositive modification, and constructing H-thick knots]{State cycles, quasipositive modification, and constructing H-thick knots in Khovanov homology}
\author{Andrew Elliott}
\thanks{\noindent Partially supported by the National Science 
Foundation  DMS-0706929}
\address{Department of Mathematics\\Rice University\\6100 S. Main Street\\Houston, TX 77005 }
\email{elflord@rice.edu}
\begin{document}

\begin{abstract}
We study Khovanov homology classes which have state cycle representatives, and examine how they interact with Jacobsson homomorphisms and Lee's map $\Phi$.  As an application, we describe a general procedure, quasipositive modification, for constructing H-thick knots in rational Khovanov homology.  Moreover, we show that specific families of such knots cannot be detected by Khovanov's thickness criteria.  We also exhibit a sequence of prime links related by quasipositive modification whose width is increasing.
\end{abstract}

\maketitle

\section{Introduction}

Khovanov homology is a bigraded link homology whose graded Euler characteristic is a normalization of the Jones polynomial.  As a knot invariant, it is stronger than the Jones polynomial, and it is defined in a combinatorial manner, lending itself to computer calculation.  After Bar-Natan wrote up his initial calculations, a striking pattern emerged:  when plotting homological degree against quantum degree, most small crossing number knots had every nontrivial homology group lying on two adjacent ``diagonals'' of slope 2, with y-intercept of the two diagonals being the signature of the knot $\pm 1$.  Knots for which the Khovanov homology lies on only two such diagonals are called H-thin, while those with three or more diagonals are called H-thick.  The number of these diagonals on which the homology is supported is called the homological width.

Recent work of Ozsvath and Manolescu ~\cite{manolescu-2007} has grouped all but one of these small H-thin knots into a single family, called quasi-alternating knots, for which both the Khovanov and Knot Floer homologies are $\sigma$ thin, in the sense that the knots are H-thin with intercepts related to the signature.  For these knots, the reduced Khovanov homology is determined entirely by the Jones polynomial and the signature.  From this perspective, the H-thick knots are the knots with ``interesting'' Khovanov homology.

Most of the general theorems guaranteeing thickness are laid out in ~\cite{khov-patterns}. Other authors have come up with upper bounds for the homological width of a knot.  But, to actually show a knot is H-thick, one must either do an explicit calculation, or turn to the theorems of Khovanov from ~\cite{khov-patterns}, summarized below.

\begin{thm}
(Khovanov) $K_1 \# K_2$ is H-thick if and only if at least one of $K_1, K_2$ are H-thick.
\end{thm}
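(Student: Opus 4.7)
The plan is to derive the theorem from the K\"unneth-type formula for rational Khovanov homology under connected sum, together with a short combinatorial analysis of Minkowski sums of diagonal supports.

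The main input I would use is the bigraded isomorphism of $\mathbb{Q}$-vector spaces
\[
Kh(K_1\#K_2;\mathbb{Q})\otimes_{\mathbb{Q}}V \;\cong\; Kh(K_1;\mathbb{Q})\otimes_{\mathbb{Q}}Kh(K_2;\mathbb{Q}),
\]
where $V=\mathbb{Q}\langle 1,x\rangle$ is the Frobenius algebra used to build Khovanov homology, with $1$ in bidegree $(0,1)$ and $x$ in bidegree $(0,-1)$. This is standard: it follows from the saddle cobordism relating $K_1\#K_2$ to the split link $K_1\sqcup K_2$, combined with flatness of $V$ over $\mathbb{Q}$. I would cite it rather than reprove it; establishing the bigrading-preserving isomorphism is the one step that is not purely formal, and it is the main obstacle to the whole argument.

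Write $D(K)\subset\mathbb{Z}$ for the set of diagonals $\delta=j-2i$ on which $Kh(K;\mathbb{Q})$ is nonzero, so $|D(K)|$ is the homological width. Because tensor products over a field preserve nonvanishing of every graded summand, the displayed isomorphism descends on diagonal supports to
\[
\bigl(D(K_1\#K_2)-1\bigr)\cup\bigl(D(K_1\#K_2)+1\bigr)\;=\;D(K_1)+D(K_2)
\]
(Minkowski sum on the right). Lee's theorem supplies, for every knot $K$, two nontrivial classes in homological degree $0$ at quantum degrees $s(K)\pm 1$, so $D(K)\supseteq\{s(K)-1,s(K)+1\}$ and in particular $|D(K)|\geq 2$, with equality precisely when $K$ is H-thin.

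To conclude, if both $K_i$ are H-thin with $D(K_i)=\{s_i-1,s_i+1\}$ the Minkowski sum on the right becomes the three-term arithmetic progression $\{s_1+s_2-2,\,s_1+s_2,\,s_1+s_2+2\}$, and the only subset of the odd integers whose translates by $\pm 1$ union to this progression is $D^\#=\{s_1+s_2-1,\,s_1+s_2+1\}$, so $K_1\#K_2$ is H-thin. Conversely, if $K_1\#K_2$ is H-thin then the left-hand side above has exactly three elements, forcing $|D(K_1)+D(K_2)|=3$; the elementary inequality $|A+B|\geq|A|+|B|-1$ for finite $A,B\subset\mathbb{Z}$, combined with $|D(K_i)|\geq 2$ from Lee, then pins $|D(K_1)|=|D(K_2)|=2$, and Lee's adjacency makes both $K_i$ H-thin. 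Once the K\"unneth formula is in hand the remainder is purely set-theoretic bookkeeping on finite subsets of $\mathbb{Z}$.
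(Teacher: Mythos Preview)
The paper does not supply a proof of this statement; it is simply quoted as a result of Khovanov from \cite{khov-patterns}, so there is no in-paper argument to compare against.

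Your argument is correct. The K\"unneth-type identity $Kh(K_1\#K_2;\mathbb{Q})\otimes V\cong Kh(K_1;\mathbb{Q})\otimes Kh(K_2;\mathbb{Q})$ follows over $\mathbb{Q}$ from Khovanov's reduced connected-sum formula together with the splitting $Kh(K)\cong\widetilde{Kh}(K)\otimes V$, and it yields precisely the Minkowski-sum relation on diagonal supports that you wrote down. The forward direction is then immediate from $D(K_i)=\{s_i-1,s_i+1\}$. For the converse, the key point you need (and do use, via Lee) is that when $|D^\#|=2$ the two diagonals are \emph{adjacent}, so the left-hand side really has three elements; after that, the elementary bound $|A+B|\ge |A|+|B|-1$ together with $|D(K_i)|\ge 2$ forces both widths to be $2$. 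All of this is sound.

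For comparison, Khovanov's original argument in \cite{khov-patterns} proceeds through the reduced theory directly and in fact proves the sharper additive formula $\mathrm{width}(K_1\#K_2)=\mathrm{width}(K_1)+\mathrm{width}(K_2)-2$, of which the if-and-only-if statement is an immediate corollary. Your route recovers the equivalence without needing to know in advance that diagonal supports have no gaps; if you also feed in that fact (which follows from Khovanov's Krull--Schmidt analysis), your Minkowski-sum identity yields the full width formula as well.
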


\begin{thm}
(Khovanov) The (usual) Jones polynomial of an H-thin knot is alternating.
\end{thm}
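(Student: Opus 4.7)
The plan is to use the knight-move decomposition of rational Khovanov homology to make alternation transparent. Over $\Q$, the Khovanov homology of any knot $K$ decomposes (as a consequence of Lee's theorem) as a single \emph{exceptional pair}---generators at bigradings $(0, s-1)$ and $(0, s+1)$ detecting Rasmussen's $s$-invariant---together with a collection of \emph{knight-move pairs}, each consisting of two generators whose bigradings differ by $(1,4)$. For an H-thin knot with diagonals $j-2i = s\pm 1$, the two exceptional generators sit on these diagonals, and each knight-move pair must bridge them: its lower corner at some $(i_p,\,2i_p + s - 1)$ and its upper corner at $(i_p+1,\,2(i_p+1) + s + 1)$.

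Summing over these pieces yields a clean factorization of the Jones polynomial. Setting $u = -q^2$ and letting $K(u) := \sum_p u^{i_p}$ (a polynomial with non-negative integer coefficients counting knight-move pairs by homological grading), each knight-move pair contributes $q^{s-1} u^{i_p}(1-u)(1+u)$ to $J_K(q)$, while the exceptional pair contributes $q^{s-1}(1-u)$, so
\[
J_K(q) \;=\; q^{s-1}(1-u)\bigl[\,1 + (1+u)K(u)\,\bigr].
\]
Dividing by the universal factor $q+q^{-1} = q^{-1}(1-u)$ gives $V_K(q^2) = q^{s}\bigl[\,1+(1+u)K(u)\,\bigr]$, and the substitution $t = q^2$, $u = -t$ produces
\[
V_K(-t) \;=\; (-1)^{s/2}\, t^{s/2}\bigl[\,1 + (1+t)K(t)\,\bigr].
\]
Since $K(t)$ has non-negative coefficients, so does $1 + (1+t)K(t)$; hence every coefficient of $V_K(-t)$ carries the single sign $(-1)^{s/2}$, which is precisely the statement that $V_K(t)$ is alternating in the usual sense.

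The main obstacle is the knight-move decomposition itself, which rests on Lee's theorem and the specific module structure on rational Khovanov homology. If one prefers a more self-contained argument, one can work directly with the two-diagonal dimensions $b_i := \dim Kh^{i,\,2i+s-1}$ and $a_i := \dim Kh^{i,\,2i+s+1}$: divisibility of $B(u) - u A(u)$ by $1 - u$ reduces to $B(1)=A(1)$ (Lee's theorem forces the total ranks on the two diagonals to agree), and non-negativity of the quotient may then be established by identifying it with the Poincar\'e polynomial of the reduced rational Khovanov homology along the middle diagonal $j = 2i + s$.
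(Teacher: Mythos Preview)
The paper does not supply its own proof of this statement: it is quoted as a result of Khovanov from~\cite{khov-patterns} and used only as background. So there is no in-paper argument to compare against; one can only assess your proof on its merits and, secondarily, against Khovanov's original.

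Your main argument is correct. For an H-thin knot the Lee spectral sequence necessarily degenerates at $E_2$ (a $(1,4r)$ map with $r\ge 2$ shifts the diagonal grading $j-2i$ by $4r-2\ge 6$, which is impossible when only two adjacent diagonals are occupied), so the knight-move decomposition is available. Your bookkeeping with $u=-q^2$ is accurate: the exceptional pair contributes $q^{s-1}(1-u)$, each knight-move pair contributes $q^{s-1}u^{i_p}(1-u)(1+u)$, and after dividing by $q+q^{-1}=q^{-1}(1-u)$ and substituting $t=q^2$ you land on $V_K(-t)=(-1)^{s/2}t^{s/2}\bigl[1+(1+t)K(t)\bigr]$ with $K$ a Laurent polynomial in $t$ with non-negative integer coefficients. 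That gives alternation of signs. One small point worth stating explicitly: $K(u)$ is a \emph{Laurent} polynomial (the $i_p$ may be negative), but this does not affect the non-negativity conclusion.

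Compared with Khovanov's original treatment, your route is heavier: you invoke Lee's theorem and Rasmussen's $s$, neither of which Khovanov had or needed. The alternative you sketch at the end---via the reduced theory---is both lighter and closer in spirit to the source. Over $\Q$ one has $Kh^{i,j}(K)\cong\widetilde{Kh}{}^{i,j-1}(K)\oplus\widetilde{Kh}{}^{i,j+1}(K)$, so H-thinness of $Kh$ forces $\widetilde{Kh}$ onto a single diagonal $j=2i+s$; then $V_K(q^2)=\sum_i(-1)^i\dim\widetilde{Kh}{}^{i,2i+s}\,q^{2i+s}$ has coefficients $(-1)^i\dim\widetilde{Kh}{}^{i,2i+s}$, which visibly alternate. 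If you want a self-contained write-up, that version is preferable.
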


\begin{thm}
(Khovanov) Adequate non-alternating knots are H-thick.
\end{thm}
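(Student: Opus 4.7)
The plan is to prove the contrapositive: any adequate H-thin knot must be alternating. The argument combines two ingredients: a computation of the extreme homological gradings of $Kh$ for an adequate diagram, and the classical state-sum inequality $|s_A| + |s_B| \leq n+2$ with equality exactly when the diagram is alternating.

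\emph{Step 1 (extreme homology of adequate diagrams).} Let $D$ be an adequate diagram for $K$ with $n_+$ positive and $n_-$ negative crossings, and let $|s_A|, |s_B|$ denote the number of circles in the all-$A$ and all-$B$ Kauffman states. In the cube-of-resolutions model, I would consider the generator at the all-$A$ state with every circle labeled $v_+$. Because $D$ is $A$-adequate, each crossing's $B$-resolution merges two distinct circles of $s_A$, which forces the outgoing differential on this generator to vanish. Since this generator already sits at the minimum homological grading $i = -n_-$, it survives to give a class $\alpha_A \in Kh^{-n_-,\,j_A}(K)$, where $j_A$ can be written explicitly in terms of $n_+$, $n_-$, and $|s_A|$. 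The symmetric argument at the all-$B$ state, using $B$-adequacy, produces a class $\alpha_B \in Kh^{n_+,\,j_B}(K)$.

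\emph{Step 2 (width constraint from H-thinness).} If $K$ is H-thin, then $Kh(K)$ is supported on two adjacent diagonals $j = 2i + c$, $j = 2i + c + 2$, so in particular
\[
\bigl(j_B - 2 n_+\bigr) - \bigl(j_A + 2 n_-\bigr) \;\leq\; 2.
\]
Substituting the formulas from Step 1, this reduces after standard bookkeeping to the inequality $|s_A| + |s_B| \geq n + 2$, where $n = n_+ + n_-$.

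\emph{Step 3 (forcing alternation).} The classical inequality $|s_A| + |s_B| \leq n + 2$ holds for every connected link diagram, with equality if and only if $D$ is alternating. Combined with Step 2, H-thinness of $K$ forces equality, hence $D$ is alternating, hence $K$ is alternating. Taking the contrapositive gives the theorem.

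I expect the main obstacle to be Step 1, where one must keep track of the grading shifts carefully and verify that the adequacy hypothesis is precisely what kills the differentials into and out of the extremal generators. As a backup route, if the direct extreme-homology computation proves awkward, one could try to argue that for an adequate non-alternating $D$ the Kauffman bracket has non-alternating extreme coefficients, and invoke the ``H-thin implies alternating Jones polynomial'' theorem stated just above; but the diagonal argument above is cleaner and makes the role of both adequacy hypotheses symmetric.
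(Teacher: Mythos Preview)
Your Step 1 contains a decisive error: the all-$v_+$ generator at the all-$A$ state is \emph{not} a cycle. $A$-adequacy says each outgoing edge map from the all-$A$ state merges two distinct circles, i.e.\ is of the form $\mu$; but $\mu(v_+\otimes v_+)=v_+\neq 0$, so the differential does not vanish on this generator. The state cycle that $A$-adequacy actually produces at the all-$A$ state is the all-$v_-$ one (this is exactly Example~\ref{sigma0} in the paper); dually, at the all-$B$ state it is the all-$v_+$ generator that is nontrivial (Example~\ref{sigma1}). The paper does not reprove Khovanov's theorem, but it explicitly identifies these two classes as ``a critical part of Khovanov's proof.''

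Once you substitute the correct classes, the bookkeeping in Step~2 runs the wrong way. With $\alpha_0$ at $(i,j)=(-n_-,\,-|s_A|+n_+-2n_-)$ and $\alpha_1$ at $(n_+,\,|s_B|+2n_+-n_-)$, the diagonal difference $(j_B-2i_B)-(j_A-2i_A)$ equals $|s_A|+|s_B|-n$. H-thinness then only forces $|s_A|+|s_B|-n\in\{-2,0,2\}$, and combined with the classical bound $|s_A|+|s_B|\leq n+2$ this does not pin down equality: an adequate non-alternating diagram with $|s_A|+|s_B|=n$ (Turaev genus one) passes straight through your argument. Your original labels would have produced a diagonal difference of magnitude $|s_A|+|s_B|+n$, which is why the arithmetic appeared to land on $|s_A|+|s_B|\geq n+2$ --- but those classes are simply not present in homology. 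Khovanov's argument in \cite{khov-patterns} needs more than the location of the two corner classes; it uses structural information about the complex (the decomposition the paper alludes to when discussing diagonals in Section~2) to control the full diagonal support.
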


\noindent Some other knots for which we know the thickness come from explicit calculations of the Khovanov homology for those families.  These include some torus knots ~\cite{stosic-2006} ~\cite{turner-2006} and pretzel knots ~\cite{suzuki-2006}.  Concurrently with our work, Adam Lowrance ~\cite{lowrance-thick-rational} has determined the width of all 3-braids by showing that width does not change when ``width-preserving'' crossings are replaced by rational tangles.

\subsection{Results}
In this paper, we will demonstrate a new method for constructing H-thick knots, which can produce examples not detected by Khovanov's thickness criterion:  in particular, we will construct several infinite families of H-thick hyperbolic knots (hence prime, and not torus knots) that are not adequate and have an alternating Jones polynomial.  Instead of fully calculating the Khovanov homology, we will show that certain special homology classes, which have \emph{state cycle} representatives, persist under an operation we call \emph{quasipositive modification}.  In brief, a state cycle is when a single generator in the Khovanov chain complex, represented by an enhanced state, is a cycle; quasipositive modification is the process of ``gluing'' in a quasipositive braid in a way that is compatible with state cycles one wishes to persist.  See Figure \ref{figure-qp-schematic} for a schematic of this procedure.

\begin{figure}[hbtp]
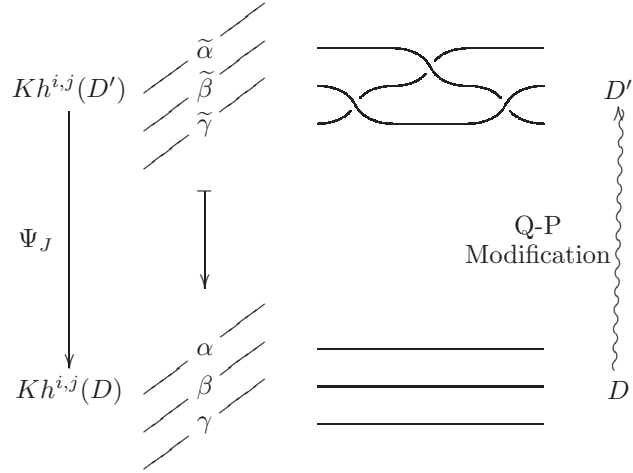

\begin{centering}
\begin{displaymath}
\xy
(-33.5,0)*++{\xy
(0,0)*{}="C1";
(8,6)*+{\gamma}="C2";
(16,12)*{}="C3";
(0,5)*{}="B1";
(8,11)*+{\beta}="B2";
(16,17)*{}="B3";
(0,10)*{}="A1";
(8,16)*+{\alpha}="A2";
(16,22)*{}="A3";
"A1"; "A2" **\dir{-}; 
"A2"; "A3" **\dir{-}; 
"B1"; "B2" **\dir{-}; 
"B2"; "B3" **\dir{-}; 
"C1"; "C2" **\dir{-}; 
"C2"; "C3" **\dir{-}; 
\endxy }="Kh1";
(21.5,0)*+{D}="Label1";
(-51.5,0)*+{\Kh(D)}="LabelKh1";
(-33.5,40)*++{\xy
(0,0)*{}="C1";
(8,6)*+{\widetilde{\gamma}}="C2";
(16,12)*{}="C3";
(0,5)*{}="B1";
(8,11)*+{\widetilde{\beta}}="B2";
(16,17)*{}="B3";
(0,10)*{}="A1";
(8,16)*+{\widetilde{\alpha}}="A2";
(16,22)*{}="A3";
"A1"; "A2" **\dir{-}; 
"A2"; "A3" **\dir{-}; 
"B1"; "B2" **\dir{-}; 
"B2"; "B3" **\dir{-}; 
"C1"; "C2" **\dir{-}; 
"C2"; "C3" **\dir{-}; 
\endxy }="Kh2";
(21.5,40)*+{D'}="Label2";
(-51.5,40)*+{\Kh(D')}="LabelKh2";
(-3.5,40)*++{\xy
(0,0)*{}="A1";
(0,5)*{}="A2";
(0,10)*{}="A3";
(10,0)*{}="B1";
(10,5)*{}="B2";
(10,10)*{}="B3";
(20,0)*{}="C1";
(20,5)*{}="C2";
(20,10)*{}="C3";
(30,0)*{}="D1";
(30,5)*{}="D2";
(30,10)*{}="D3";
\vtwist~{"A1"}{"A2"}{"B1"}{"B2"};
\vtwist~{"B2"}{"B3"}{"C2"}{"C3"};
\vcross~{"C1"}{"C2"}{"D1"}{"D2"};
"B1"; "C1" **\dir{-};
"A3"; "B3" **\dir{-};
"C3"; "D3" **\dir{-};
\endxy}="Braid2";
(-3.5,0)*++{\xy
(0,0)*{}="A1";
(0,5)*{}="A2";
(0,10)*{}="A3";
(30,0)*{}="D1";
(30,5)*{}="D2";
(30,10)*{}="D3";
"A1"; "D1" **\dir{-};
"A2"; "D2" **\dir{-};
"A3"; "D3" **\dir{-};
\endxy}="Braid1";
{\ar@{|->} "Kh2";"Kh1"};
{\ar@{->}_*+{\Psi_J} "LabelKh2";"LabelKh1"};
{\ar@{~>}^{\txt{Q-P \\Modification}} "Label1";"Label2"};
\endxy
\end{displaymath}
\end{centering}
\caption{A schematic for constructing thick knots via quasipositive modification.}
\label{figure-qp-schematic}
\end{figure}

The first main result is a prescription for what a nontrivial state cycle has to look like.  A rough statement is given by:

\begin{thm-class}
For a state cycle to represent a nontrivial homology class, the underlying state must be ``almost'' adequate, and ``most'' loops of the 1-block must be marked by $v_+$, while all 0-tracing loops must be marked by $v_-$.
\end{thm-class}

A precise statement is given in Section 4 where the theorem appears.  Our next result is that these state cycles persist under the operation of quasipositive modification:

\begin{thm-lifting-qp}
Let $D$ be an oriented diagram and $\alpha$ a nontrivial state cycle.  Suppose $D'$ is gotten from $D$ by quasipositive modification compatible with $\alpha$, and that $\Psi$ is the associated Jacobsson homomorphism from $Kh(D')$ to $Kh(D)$.  Then there exists a state cycle $\widetilde{\alpha}$ so that $\Psi(\widetilde{\alpha}) = \pm \alpha$.  If $B$ is the quasipositive braid associated to this modification, then $\widetilde{\alpha}$ is the enhanced traced state where:
\begin{itemize}
\item All crossings from $D$ are smoothed as in $\alpha$.
\item Negative crossings from $B$ are 1-smoothed, positive crossings from $B$ are 0-smoothed.
\item Every loop in $\widetilde{\alpha}$ is marked the same as $\alpha$. 
\end{itemize}
\end{thm-lifting-qp}

Furthermore, if multiple such state cycles are compatible with a quasipositive modification, the ``lifts'' of these state cycles retain the same relative grading difference; this acts as the main workhorse for constructing H-thick knots.

Quasipositive modification is also compatible with Lee's homomorphism, in the following sense:

\begin{thm-lifting-lee}
Let $D$ be an oriented diagram, and $\alpha$ a nontrivial state cycle of $D$.  Suppose $D'$ is a diagram gotten by quasipositive modification on $D$ compatible with $\alpha$, and that $\widetilde{\alpha}$ is the lift of $\alpha$.  Then $\Psi_J(\Phi_{Lee}(\widetilde{\alpha})) = \Phi_{Lee}(\Psi_J(\widetilde{\alpha})) = \Phi_{Lee}(\alpha)$.
\end{thm-lifting-lee}

This last result is useful in reducing the number of state cycles needed to construct H-thick knots:  one can find two nontrivial state cycles in distinct diagonals, and then examine Lee's spectral sequence to see that one of them is paired with a third nonzero diagonal by the Lee homomorphism.  This pairing is then preserved under quasipositive modification, so that at least three distinct diagonals will be present in the modified knot.

\subsection{Layout}
In Section 2, we go over the basics of Khovanov homology.  Section 3 introduces the notion of a state cycle, the special case when a single generator is a cycle in the Khovanov chain complex.  Section 4 gives the classification of state cycles which represent nontrivial homology classes, and Section 5 examines how state cycles interact with various maps and the Lee spectral sequence.  Section 6 carefully defines quasipositive modification and discusses how it interacts with state cycles and related cycles.  Section 7 lays out our examples of families of H-thick knots, and details other base knots where this construction works to construct H-thick knots.  Included are two infinite families of H-thick knots which cannot be detected by the Khovanov thickness criterion, and a sequence of prime links related by quasipositive modification which have increasing width.

\subsection*{Acknowledgements} The author would like to thank John Baldwin, Eric Chesebro, and Matt Hedden for helpful conversations, and Tim Cochran for encouraging him to continue with this problem.  The author also thanks Scott Morrison for sharing his recent update to JavaKh which can calculate the Khovanov homology of much larger diagrams, written for calculations in a forthcoming paper ~\cite{fgmw-spc4}.

\section{Short Review of Khovanov Homology}

Experts in the field should feel free to skip this section, with the understanding that we follow Bar-Natan's ~\cite{bn-khov-1} convention for labelling generators, and work with $\Q$ coefficients.  What follows is a review of the cube of resolutions construction of Khovanov homology.

\begin{figure}[ht!]
\centering
\includegraphics[height=50pt]{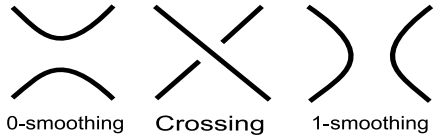}
\caption{Standard smoothing convention for a crossing}
\label{figure-smoothing}
\end{figure}

Given a diagram $ D $ for an oriented link $ L $, one can construct a state for that diagram by replacing every crossing with a choice of smoothing, per the convention in Figure \ref{figure-smoothing}.  The result will be a collection of planar loops, as shown in Figure \ref{figure-smoothing_diagram}.

\begin{figure}[ht!]
\begin{center}
\begin{displaymath}
\xymatrix{
\includegraphics{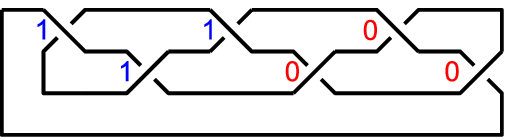} \ar@{~>}[r] & \includegraphics{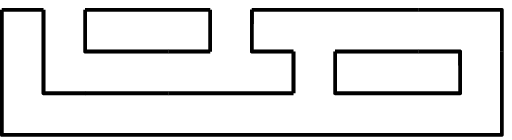} \\
}
\end{displaymath}
\end{center}
\caption{On the left, a choice of smoothings has been assigned to every crossing.  On the right, each crossing has been replaced by its chosen smoothing, giving a state for the diagram.} 
\label{figure-smoothing_diagram}
\end{figure}

By choosing an ordering for the crossings, and keeping track of the choice of 0 or 1-smoothing for each crossing, one gets a label for the state:  a string of 0's and 1's encoding the smoothing choices.  This label can also be viewed as a binary coordinate system for a hypercube whose vertices are all the possible states for the diagrams, arranged according to this coordinate system of the labels.  This hypercube is what is called the cube of resolutions for the diagram $ D $.

The chain complex for Khovanov homology is constructed by associating a graded vector space to each state, and then organizing them in some fashion that respects the structure of this cube of resolutions.  The construction is modelled on a categorification of the Kauffman state-sum formula for the Jones polynomial, with a slightly different normalization.

\subsection{Chain Groups}
Specifically, let $ V $ be the graded vector space over $\Q$ with basis $<v_+, v_->$ of grading +1, -1 respectively.  Given a state $\sigma$, let $C_\sigma$ be the graded vector space $V^{\otimes k}$, where $k$ is the number of loops in $\sigma$.  Let $h(\sigma)$, the height of the state, be the number of 1-smoothings in the label for $\sigma$.  Then, the $r^{th}$ unnormalized chain group of the diagram is defined by: 

\begin{equation}
\label{chaingroup}
C^r(D) = \bigoplus\limits_{h(\sigma) = r} C_\sigma
\end{equation}

This breakdown of generators by state lends itself to viewing generators of the chain groups as enhanced states:  states with a basis element of $V$ marked on each loop.

\subsection{Differentials}
The differential is also defined recursively at the level of these state groups, and further broken down into edge differentials, corresponding to the edges of the cube of resolutions.  An edge between two states exists if the labels for the two states differ in only a single place, where a 0 changes to a 1.  The associated edge differential is viewed as going from the state with the the 0 label to the state with the 1 label, and corresponds to the cobordism of the change from the 0 smoothing to the 1 smoothing.  This cobordism either merges two circles from the initial state, or pinches off a second circle from a circle in the initial state, and the edge differential is defined in accordance with this dichotomy, as shown in Table \ref{edge-differential}.  One labels an edge by taking the label of its originating state and substituting a * for the 0 which changes to 1 in the target of the edge.

\begin{table}[ht!]
\begin{displaymath}
\xymatrix@R=2pt{ \bigcirc \phantom{\otimes} \bigcirc \ar[r]^{\quad \mu} & \bigcirc & \qquad & \bigcirc \ar[r]^{\Delta \qquad \quad} & \bigcirc \phantom{\otimes} \bigcirc \phantom{+ \bigcirc \otimes \bigcirc}\\
v_+ \otimes v_+ \ar@{|->}[r] & v_+ & \qquad & v_+ \ar@{|->}[r] & v_+ \otimes v_- + v_- \otimes v_+ \\
v_+ \otimes v_- \ar@{|->}[r] & v_- & \qquad & v_- \ar@{|->}[r] & v_- \otimes v_- \phantom{+ v_- \otimes v_-}\\
v_- \otimes v_+ \ar@{|->}[r] & v_- \\
v_- \otimes v_- \ar@{|->}[r] & 0}
\end{displaymath}
\caption{The edge differential, $d_e$ takes one of the above forms depending on whether the associated cobordism takes two circles to one, or vice versa.  Outside of the changed part of the state, the edge differential acts as the identity.\label{edge-differential}}
\end{table}

These edge differentials are then bundled into a state differential, with a sprinkling of negative signs added in such a way that squares in the cube of resolutions anticommute.  Namely, let $|e|$ be the number of 1's which occur in the edge's label before the *, and choose $(-1)^{|e|}$ to be the sign for edge differential $d_e$.   We then combine all of the state differentials into another direct sum, to obtain the full chain differential:

\begin{equation}
\xymatrix{ d_\sigma: C_\sigma \longrightarrow C^{r+1} &
d^r: C^r \longrightarrow C^{r+1} 
\\
d_\sigma = \bigoplus \limits_{\text{edges }e} (-1)^{|e|} d_e &
d^r = \bigoplus \limits_{h(\sigma) = r} d_\sigma}
\end{equation}

For our purposes, the most important thing to remember is that edge differentials leave a state's chain group for every 0 in the state's label, and enter the state's chain group for every 1 in the state's label.  Much of the analysis of whether things are nontrivial cycles will revolve around careful analysis of the edge differentials from this perspective.

\subsection{Gradings}
To get the bigrading of $ Kh(L) $, there are some index shifts from the writhe and the cube construction that need to be addressed.  Since we will be dealing directly with chain generators, it will be sufficient to describe the bigrading of a chain generator.

So, let $ \alpha_\sigma $ be a chain generator based on state $ \sigma $.  Let $ n_+ $ and $ n_- $ be the number of positive and negative crossings of $ L $ respectively, following the usual righthanded sign convention.  Let $ v_+ (\alpha) $ and $ v_- ( \alpha ) $ denote the number of $ v_+ $ and $ v_- $ elements in the tensor for $ \alpha $.  Then, the bigrading $ (t, q) $, representing the \emph{homological grading} and \emph{quantum grading} respectively, is given by:

\begin{align}
t(\alpha) &= h(\sigma) - n_- \tag{homological grading}\\
q(\alpha) &= v_+(\alpha) - v_-(\alpha) + h(\sigma) + n_+ - 2 n_-
\tag{quantum grading}
\label{khov-bigrading}
\end{align}

All cycles constructed later will be in terms of chain generators, so this gives a concise way to calculate their bigrading.

\subsection{Diagonals}
Finally, let's review information about the diagonals of Khovanov homology.  A common way to present the Khovanov homology of a link is in a table of the following form, where the ranks of the Khovanov homology at bigrading $(i, j)$ are plotted so that homological grading goes horizontally, and quantum grading vertically.  Note that the quantum gradings shown are either all even or all odd, because the Khovanov homology of a link of $n$ components is supported only on quantum gradings $j \equiv n \pmod{2}$.  An example is given in Table \ref{table-9_41}.

\begin{table}[htbp]
\begin{center}
\setlength\arrayrulewidth{2pt}
\setlength\doublerulesep{2pt}
\begin{tabular}{|q||q|q|q|q|q|q|q|q|q|q|} \hline
 & -6 & -5 & -4 & -3 & -2 & -1 & \phantom{-}0 & \phantom{-}1 & \phantom{-}2 & \phantom{-}3\\ \hline\hline
 
\phantom{- }7  	&   	 &   	 & 		 &  	 &  	 &  	 & 		 & 		 & 		 &{\cy}1  \\ \hline
\phantom{- }5  	&   	 &   	 &  	 &  	 &  	 &		 &		 &		 &{\cy}2 &{\cb}    \\ \hline
\phantom{- }3  	&   	 &   	 & 	 	 & 		 &		 &		 &		 &{\cy}3 &{\cb}1 &    \\ \hline
\phantom{- }1  	&   	 &   	 &  	 &		 &		 &		 &{\cy}5 &{\cb}2 &  	 &    \\ \hline
\phantom{ }-1 	&   	 &		 & 		 &		 &		 &{\cy}4 &{\cb}4 & 		 &  	 &    \\ \hline
\phantom{ }-3 	&   	 &		 &		 &		 &{\cy}4 &{\cb}4 &  	 &  	 &  	 &    \\ \hline
\phantom{ }-5 	&		 &		 &		 &{\cy}3 &{\cb}4 &  	 &  	 &  	 &  	 &    \\ \hline
\phantom{ }-7 	& 		 &		 &{\cy}2 &{\cb}4 &   	 &   	 &  	 &  	 &  	 &    \\ \hline
\phantom{ }-9 	& 		 &{\cy}1 &{\cb}3 &   	 &   	 &  	 &  	 &  	 &    	 &	  \\ \hline
-11 			& {\cy}  &{\cb}2 &   	 &   	 &  	 &  	 &  	 &    	 &		 &	  \\ \hline
-13 			& {\cb}1 &		 &   	 &   	 &  	 &  	 &  	 &    	 &		 &	  \\ \hline
\end{tabular}
\end{center}
\caption{Rational Khovanov homology of $9_{41}$, an H-thin knot.  Its two diagonals are shaded.}
\label{table-9_41}
\end{table}

A pattern for the support of the Khovanov homology is that it always lies on a certain number of slope 2 diagonals (this has to do with Khovanov's Krull-Schmidt decomposition of the chain complex in ~\cite{khov-patterns}).  A diagonal of grading $\delta$ comprises all $\Kh(L)$ of the form $\delta = 2i-j$.  When a link's homology is supported on only 2 such diagonals, we say the link is \emph{H-thin}.  An example is shown in Table \ref{table-9_41}.

\begin{table}[htbp]
\begin{center}
\setlength\arrayrulewidth{2pt}
\setlength\doublerulesep{2pt}
\begin{tabular}{|q||q|q|q|q|q|q|q|} \hline
 & -4 & -3 & -2 & -1 & \phantom{-}0 & \phantom{-}1 & \phantom{-}2 \\ \hline\hline
\phantom{-}7  	&   	 &   	 & 		 &  	 &  	 &  	 &{\cy}1  \\ \hline
\phantom{-}5  	&   	 &   	 &  	 &  	 &  	 &{\cy}  &{\cb}    \\ \hline
\phantom{-}3  	&   	 &   	 & 	 	 & 		 &{\cy}1 &{\cb}1 &    \\ \hline
\phantom{-}1  	&   	 &   	 &  	 &{\cy}1 &{\cb}1 &  	 &    \\ \hline
-1 				&   	 &		 & {\cy} &{\cb}1 &{\co}1 &  	 &    \\ \hline
-3 				&   	 &{\cy}1 &{\cb}1 &  	 &  	 &  	 &    \\ \hline
-5 				& {\cy}  &{\cb}	 &  	 &  	 &  	 &  	 &    \\ \hline
-7 				& {\cb}1 &   	 &   	 &  	 &  	 &  	 &    \\ \hline
\end{tabular}
\end{center}
\caption{Rational Khovanov homology of $9_{42}$, an H-thick knot.  The three diagonals are shaded in different colors.}
\label{table-9_42-diagonals}
\end{table}

When a link's Khovanov homology is supported on 3 or more diagonals, we say the link is \emph{H-thick}.  An example is seen in Table \ref{table-9_42-diagonals}.  We say that the \emph{width} of a link's Khovanov homology is the number of diagonals on which its Khovanov homology is supported.

\section{State Cycles}

Enhanced states comprise the generators of the Khovanov chain complex.  But sometimes, a single enhanced state turns out to be a cycle representative.  These special cycles are especially convenient to manipulate, and serve as the basis for the cycles we will construct explicitly to generate our H-thick families.

\begin{defn}
We say that an enhanced state $\alpha$ is a \emph{state cycle} if the associated element of the chain complex is a cycle; namely, $d(\alpha) = 0$.  We say that a state cycle is \emph{nontrivial} if it represents a nontrivial homology class.
\label{def-state_cycle}
\end{defn}

By the definition of the Khovanov differential in terms of edge differentials leaving a state's chain group, it is a much simpler task to determine if a single enhanced state $\alpha_\sigma$ is a cycle than it is to show that it is nontrivial.  Indeed, a quick look over the edge differentials in Table \ref{edge-differential} reveals that for this to happen, every edge exiting $\sigma$ must fall in the $\mu$ case, with the respective loops marked by $ v_-$ in $ \alpha_\sigma$.  This forces every edge differential leaving $\alpha_\sigma$ to vanish; since $\alpha_\sigma$ lies in $ker(d)$, it is a cycle in $Kh(L)$.

\begin{figure}[ht!]
\centering
\includegraphics[height=50pt]{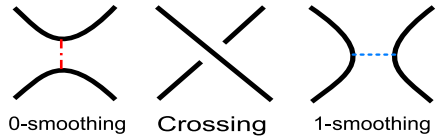}
\caption{Smoothings marked with traces of the crossings. 0-smoothings will be red, dot-and-dashed lines; 1-smoothings will be blue, dotted lines.}
\label{figure-trace}
\end{figure}

To better analyze this situation, it is helpful to record not just the smoothings of $\sigma$, but also the traces of the crossings, as shown in Figure \ref{figure-trace}. The \emph{trace} of a crossing is a shadow to show where a crossing has been smoothed to get the state $\sigma$, and represents where an edge differential either enters or exits a chain generator based on $\sigma$.  See Figure \ref{figure-trace-diff} for a schematic of this relation.  A state with all its tracings marked is said to be a \emph{traced state}; an enhanced state with all its tracing marks is said to be an \emph{enhanced trace state}, or \emph{ET state}.  See Figure \ref{figure-tracing_diagram} for an example of an ET state.

\begin{figure}[ht!]
\centering
\includegraphics{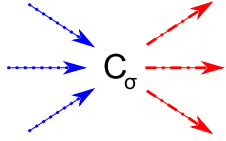}
\caption{Schematic for how traces correspond to edge differentials.  Red dot-and-dash edge differentials exit the chain group, and come from the red dot-and-dash 0-traces; blue dotted edge differentials enter the chain group, and come from the blue dotted 1-traces.}
\label{figure-trace-diff}
\end{figure}

\begin{figure}[ht!]
\begin{center}
\begin{displaymath}
\xymatrix{
\includegraphics{figures/6_3_diagram_numbered-small} \ar@{~>}[r] & \includegraphics{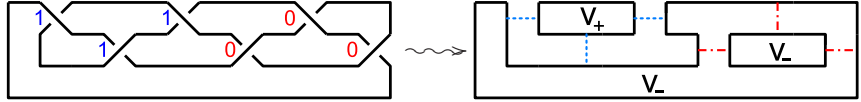} \\
}
\end{displaymath}
\end{center}
\caption{On the left, a choice of smoothings has been assigned to every crossing.  On the right, an ET state corresponding to $v_+ \otimes v_- \otimes v_-$ is shown.} 
\label{figure-tracing_diagram}
\end{figure}

Let us introduce some further terminology to discuss these traces of crossings.  We say that a trace is a \emph{mergetrace} or \emph{pinchtrace} if the trace connects two or one loop in $\sigma$, respectively.  See Figure \ref{figure-tracetypes} for some examples.  The terminology is meant to suggest that when the crossing associated to a mergetrace is changed to the opposite smoothing, the two loops joined by the mergetrace are merged together; similarly, when the crossing associated to a pinchtrace is changed to the opposite smoothing, the original loop is pinched into a pair of loops.

\begin{figure}[ht!]
\centering
\includegraphics{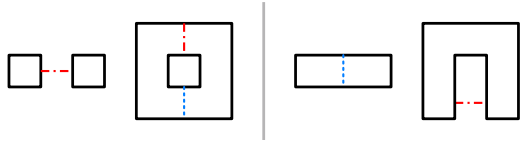}
\caption{The left half shows examples of mergetraces; the right half shows examples of pinchtraces.}
\label{figure-tracetypes}
\end{figure}

We can further differentiate traces by keeping track of which kind of smoothing they are associated to in $\sigma$.  If a mergetrace is associated to a crossing that has been 0-smoothed in $\sigma$, we say it is a \emph{0-mergetrace}, and so on.  Returning to the discussion of state cycles, the condition now becomes that $\sigma$ must be a state such that every 0-trace is a mergetrace, and that every loop touched by a 0-mergetrace must be marked by $v_-$ in $\alpha_\sigma$.

Now, let us introduce some terminology describing a state in terms of its traces.  A state is said to be \emph{0-merging} if every 0-trace in the state is a mergetrace.  Similarly, a state is said to be \emph{1-merging} if every 1-trace is a mergetrace, and so on for \emph{0-pinching} and \emph{1-pinching}.  This leads to the definition of an \emph{adequate state} as a state that is both 0-merging and 1-merging (compare to Ozawa's definition ~\cite{ozawa-state}).  

Unfortunately, the term ``adequate'' has been overloaded in the literature.  Building off the above definition, a diagram is said to be \emph{adequate} if its all-0 state and its all-1 state are both adequate as states, \emph{+ adequate} if its all-0 state is adequate as a state, and \emph{- adequate} if its all-1 state is adequate as a state.  Similarly, a link is said to be any of the above if it admits a diagram with that property.  This was the context in which Ozawa introduced the notion of an adequate state in ~\cite{ozawa-state}, where he studied properties of the surfaces associated to such states in the knot complement.

As a final bit of terminology useful in discussing ET states, we say that a loop in $\sigma$ is \emph{0-tracing} if it is touched by a 0-trace in the traced state $\sigma$.  The \emph{1-block} of $\sigma$ is the set of loops which are 1-tracing, but not 0-tracing.  It follows from our earlier discussion that for state cycles, the ET state $\alpha_\sigma$ must be 0-merging and mark every 0-tracing loop with $v_-$, but there is no condition on loops in the 1-block.  In summary, we have:

\begin{prop}
\label{zeromergingcycle}
Let $\alpha_\sigma$ be an ET state.  Then $\alpha_\sigma$ is a state cycle if and only if $\sigma$ is 0-merging and every 0-tracing loop of $\alpha_\sigma$ is marked by $v_-$.
\end{prop}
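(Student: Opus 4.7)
The plan is to decouple the cycle condition $d(\alpha_\sigma) = 0$ into a separate vanishing condition at each edge out of $\sigma$, and then read off what each such vanishing says combinatorially. First I would note that edges out of $\sigma$ have pairwise distinct target states $\sigma'$, since they correspond to distinct neighbors of $\sigma$ in the cube of resolutions. Hence the images of the various $d_e(\alpha_\sigma)$ lie in distinct summands $C_{\sigma'}$ of the direct sum $C^{r+1}$ from (\ref{chaingroup}), so no cancellation between outgoing edge differentials is possible; in particular the signs $(-1)^{|e|}$ play no role. It follows that $d(\alpha_\sigma) = 0$ if and only if $d_e(\alpha_\sigma) = 0$ for every edge $e$ leaving $C_\sigma$. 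By the earlier remark that edges leave $C_\sigma$ at each $0$ in its label, these edges are in bijection with the 0-traces of $\sigma$.

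Next I would split into cases based on the type of each 0-trace. For a 0-pinchtrace, $d_e$ acts as $\Delta$ on the pinched loop and as the identity elsewhere; since $\Delta(v_\pm) \neq 0$ on both basis vectors, the output $d_e(\alpha_\sigma)$ is automatically nonzero, ruling out 0-pinchtraces and forcing $\sigma$ to be 0-merging. For a 0-mergetrace joining two loops labelled $x, y \in \{v_+, v_-\}$, $d_e$ acts as $\mu$ on those two loops and as the identity elsewhere, and reading Table \ref{edge-differential} shows $\mu(x \otimes y) = 0$ if and only if $x = y = v_-$. Since 0-merging implies every 0-tracing loop sits at an endpoint of some 0-mergetrace, combining the two cases gives exactly the condition that every 0-tracing loop of $\alpha_\sigma$ is marked $v_-$.

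The converse is then immediate: if $\sigma$ is 0-merging and all 0-tracing loops carry $v_-$, each outgoing edge differential is $\mu(v_- \otimes v_-) = 0$, so $d(\alpha_\sigma) = 0$. The only subtle point in the proof is the no-cancellation observation in the first step, but this is essentially built into the direct-sum decomposition of the chain complex over the vertices of the cube of resolutions; the remainder of the argument is a direct unpacking of Table \ref{edge-differential}.
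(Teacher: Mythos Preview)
Your proof is correct and follows essentially the same approach as the paper's. You are somewhat more explicit than the paper in justifying why $d(\alpha_\sigma)=0$ forces each individual $d_e(\alpha_\sigma)=0$ (namely, the distinct-target-state argument), and in separating out the 0-pinchtrace case where $\Delta$ never vanishes; the paper compresses both of these into the single remark that ``the only way an individual edge differential can be zero is when $d_e$ corresponds to multiplication and acts on two loops marked by $v_-$.''
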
 
\begin{proof}
Suppose $\alpha_\sigma$ is a state cycle.  Since it is only a single generator in the chain complex, the only way it can lie in the kernel of the differential is if every outgoing edge differential is zero.  Looking over Table \ref{edge-differential}, one sees that the only way an individual edge differential can be zero is when $d_e$ corresponds to multiplication and acts on two loops marked by $v_-$.  Since we have an outgoing edge differential for every 0-trace in the state, this means that every 0-trace must be a mergetrace and every loop touched by a 0-trace must be marked by $v_-$, as claimed.

For the opposite direction, the setup guarantees every outgoing edge differential will be zero, by the above discussion.  As the total differential is a sum of these edge differentials, $ \alpha_\sigma $ lies in $ker(d)$ and is a state cycle.
\end{proof}

\begin{example}
\label{sigma0}
Suppose $D$ is a + adequate diagram.  Let $\sigma_0$ denote the all-0 state of $D$.  By definition, $\sigma_0$ is an adequate state, and in particular 0-merging.  Let $\alpha_0$ be the ET state of $\sigma_0$ where every loop is marked by $v_-$, as shown in Figure \ref{figure-example_sigma0}.  Proposition \ref{zeromergingcycle} tells us that $\alpha_0$ is a state cycle, but because $\sigma_0$ is at the very bottom of the cube of resolutions for $D$, there are no differentials entering $\sigma_0$.  Therefore $\alpha_0$ actually represents a nontrivial homology class, one of minimal homological and quantum grading since it is at the bottom of the cube of resolutions.
\end{example}

\begin{figure}[ht!]
\begin{center}
\begin{displaymath}
\xymatrix{
\includegraphics{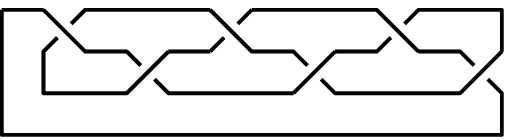} \ar@{~>}[r] & \includegraphics{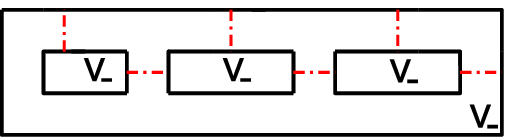} \\
}
\end{displaymath}
\end{center}
\caption{On the left is a + adequate diagram of $6_3$.  On the right, the all-0 ET state $\alpha_0$ is shown.} 
\label{figure-example_sigma0}
\end{figure}

\begin{example}
\label{sigma1}
Suppose $D$ is a - adequate diagram.  Let $\sigma_1$ denote the all-1 state of $D$.  Again, $\sigma_1$ is an adequate state, and so is both 0- and 1-merging.  Let $\alpha_1$ be the ET state of $\sigma_1$ where every loop is marked by $v_+$, as shown in Figure \ref{figure-example_sigma1}.  The set of 0-tracing loops is empty because this is the all-1 state, so Proposition \ref{zeromergingcycle} trivially holds and $\alpha_1$ is a state cycle.

However, because $\sigma_1$ lies at the top of the cube of resolutions, $C_{\sigma_1}$ is the only chain group of its height $h$ in the Khovanov chain complex for $D$.  This means that every edge differential of height $h-1$ targets $C_{\sigma_1}$, allowing us to restrict to a single edge differential for each chain group of height $h-1$.  Because every 1-trace of $\sigma_1$ is a mergetrace, every incoming edge differential must lie in the $\Delta$ half of Table \ref{edge-differential}.  By inspection, no edge differential contains a term marking $ v_+ $ on every loop of an ET state for $ \sigma_1 $.  So, no linear combination of terms can have boundary equal to $\alpha_1$.  Therefore, $\alpha_1$ does not lie in $im(d)$ for the Khovanov differential, and hence must represent a nontrivial homology class of $Kh(L)$.
\end{example}

\begin{figure}[ht!]
\begin{center}
\begin{displaymath}
\xymatrix{
\includegraphics{figures/6_3_diagram} \ar@{~>}[r] & \includegraphics{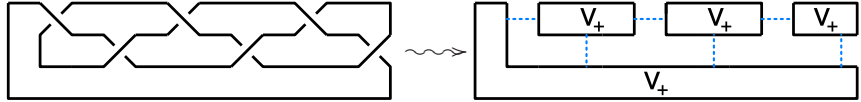} \\
}
\end{displaymath}
\end{center}
\caption{On the left is a - adequate diagram of $6_3$.  On the right, the all-1 ET state $\alpha_1$ is shown.} 
\label{figure-example_sigma1}
\end{figure}

\begin{example}
\label{olga-class}
Let $D$ be a braid diagram, and $\sigma$ be the oriented resolution, the state where every positive crossing has been 0-smoothed, and every negative crossing has been 1-smoothed.  This smoothing choice is such that the loops of the resulting state are just the strands of the braid, a concentric set of circles, with each trace going between two strands.  So, each trace is a mergetrace, and the state is adequate.  Let $ \psi $ be the ET state where every loop of $\sigma$ has been marked by $v_-$, as shown in Figure \ref{figure-example_olga}.  Proposition \ref{zeromergingcycle} tells us that $\psi $ is a cycle in Khovanov homology, but more can be said:  Plamenevskaya has shown ~\cite{olga} that $[ \psi ]$ is a transverse knot invariant, the Plamenevskaya class for $Kh(L)$.
\end{example}

\begin{figure}[ht!]
\begin{center}
\begin{displaymath}
\xymatrix{
\includegraphics{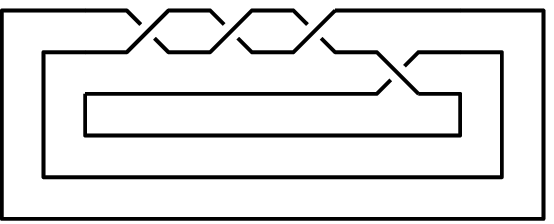} \ar@{~>}[r] & \includegraphics{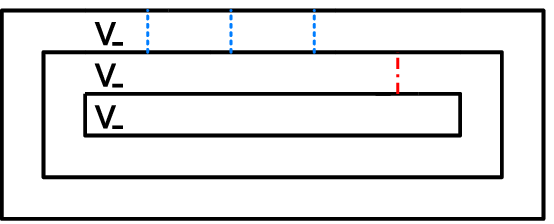} \\
}
\end{displaymath}
\end{center}
\caption{On the left is a braid diagram of the negative trefoil.  On the right, the ET state for the oriented resolution is shown, a representative for the Plamenevskaya class.} 
\label{figure-example_olga}
\end{figure}

Examples \ref{sigma0} and \ref{sigma1} are of special interest because the respective state cycles represent nontrivial homology classes.  They were a critical part of Khovanov's proof that adequate, nonalternating knots are H-thick ~\cite{khov-patterns}, and will turn out to be good models for what a general state cycle which represents a nontrivial homology class must look like.

Note that Examples \ref{sigma1} and \ref{olga-class} choose different values for their respective 1-blocks:  $v_+$ is marked on the 1-block of \ref{sigma1} while $v_-$ is marked on the 1-block of the Plamenevskaya class.  Furthermore, for all three examples, the underlying state is not simply 0-merging, but actually an adequate state.  In fact, every state cycle used in constructing the H-thick families will come from an adequate state.

\section{Classifying Nontrivial State Cycles}

In this section, we will work towards a classification of nontrivial state cycles.  The tools are a series of limits on how the 1-block can be assigned, and at the end of the section we will have a strong necessary condition for a state cycle to represent a nontrivial homology class.
  
The first step towards the classification is to place a restriction on the 1-traces of a nontrivial state cycle:

\begin{prop}
\label{adequatestatesonly}
Let $\sigma$ be a state in a diagram $D$, and $\alpha_\sigma$ an associated ET state which represents a cycle in Khovanov homology.  Suppose $\sigma$ has a 1-pinchtrace, and let $L_\sigma$ be the loop this 1-pinchtrace touches.  If $L_\sigma$ lies in the 1-block of $\sigma$, then $[ \alpha_\sigma ]$ is trivial.
\end{prop}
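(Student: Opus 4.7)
The plan is to exhibit $\alpha_\sigma$ as a boundary via a single enhanced state living one step below $\sigma$ in the cube. Let $c$ be the crossing giving the 1-pinchtrace, and let $\sigma'$ be the state obtained from $\sigma$ by replacing the 1-smoothing at $c$ with a 0-smoothing. Because a 1-pinchtrace touches only $L_\sigma$, this change splits $L_\sigma$ into two loops $L_1',L_2'$ of $\sigma'$, while leaving every other loop of $\sigma$ intact. The edge from $\sigma'$ to $\sigma$ at $c$ is therefore a $\mu$ edge merging $L_1'\otimes L_2'$ back into $L_\sigma$.

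Now define an ET state $\beta_{\sigma'}$ by copying $\alpha_\sigma$ on the loops common to $\sigma$ and $\sigma'$, and by marking $L_1',L_2'$ so that the $\mu$ map reproduces the marking that $\alpha_\sigma$ put on $L_\sigma$: take $v_+\otimes v_+$ if $\alpha_\sigma$ marks $L_\sigma$ with $v_+$, and $v_+\otimes v_-$ if $\alpha_\sigma$ marks $L_\sigma$ with $v_-$. By Table \ref{edge-differential}, the edge differential at $c$ then sends $\beta_{\sigma'}$ to $\pm\alpha_\sigma$.

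What remains is to check that every other outgoing edge differential from $\sigma'$ vanishes. The remaining outgoing edges of $\sigma'$ correspond to the 0-traces of $\sigma$, which persist in $\sigma'$ because only the crossing $c$ was changed. This is exactly where the hypothesis that $L_\sigma$ lies in the 1-block enters: since $L_\sigma$ is not 0-tracing, none of these 0-traces meets $L_\sigma$, so each one is still a mergetrace between the same two loops in $\sigma'$ as it was in $\sigma$, and both of those loops are marked by $v_-$ in $\beta_{\sigma'}$ by Proposition \ref{zeromergingcycle}. Hence each such edge differential is $\mu(v_-\otimes v_-)=0$, giving $d(\beta_{\sigma'})=\pm\alpha_\sigma$ and so $[\alpha_\sigma]=0$. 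The only real subtlety, and the place the 1-block hypothesis is essential, is precisely this step: without it, splitting $L_\sigma$ could convert a 0-mergetrace of $\sigma$ into a 0-pinchtrace in $\sigma'$ (introducing a $\Delta$-edge) or could leave $\beta_{\sigma'}$ with a $v_+$ on a 0-mergetrace endpoint, and the vanishing of the extra differentials would no longer be automatic.
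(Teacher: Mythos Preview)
Your proof is correct and follows essentially the same approach as the paper: change the 1-pinchtrace crossing to a 0-smoothing, mark the two new loops so that $\mu$ reproduces the original marking on $L_\sigma$, and use the 1-block hypothesis to see that all other outgoing edge differentials remain $\mu(v_-\otimes v_-)=0$. The paper presents the two markings of $L_\sigma$ as separate cases, but the content is identical to what you wrote.
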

\begin{proof}
Let $b$ be the crossing associated to the 1-pinchtrace.  Let $\sigma_b$ be the state obtained by replacing this 1-smoothing by a 0-smoothing, and otherwise matching the smoothings of $\sigma$.  Let $L_1$ and $L_2$ be the two loops $L_\sigma$ was split into in $\sigma_b$.  Note that since $L_\sigma$ lies in the 1-block, it is only touched by 1-traces in $\sigma$:  this means that splitting it into two loops for $\sigma_b$ will not affect any of the 0-traces coming from $\sigma$, so that they remain mergetraces. 

Case 1:  Suppose $L_\sigma$ is marked by $v_+$.  Let $\beta$ be the ET state associated to $\sigma_b$ where $L_1$ and $L_2$ are marked by $v_+$, and all other loops match their marking in $\alpha_\sigma$.  Every 0-trace except that coming from $b$ will be a mergetrace between two loops marked by $v_-$ since $\alpha
_\sigma$ is a state cycle.  So, all of the edge differentials applied to $\beta$ will be 0 except for $d_b$, the one coming from $b$.  Examining Table \ref{edge-differential}, one sees that $d_b$ takes the $v_+ \otimes v_+$ on $L_1$ and $L_2$ to the $v_+$ on $L_\sigma$, and otherwise acts as the identity.  In other words, $d_b (\beta) = \alpha_\sigma$, and since all other edge differentials were zero, $d(\beta) = \pm \alpha_\sigma$.  It follows that $[ \alpha_\sigma ]$ is trivial.

Case 2: Suppose $L_\sigma$ is marked by $v_-$.  Let $\beta$ be the ET state associated to $\sigma_b$ where $L_1$ is marked by $v_+$, $L_2$ is marked by $v_-$, and every other loop matches the marking of $\alpha_\sigma$.  From the same argument as above, the only nonzero edge differential is $d_b$; examining Table \ref{edge-differential}, we see that $d_b$ again takes $\beta$ precisely to $\alpha_\sigma$.  So, $d(\beta) = \pm \alpha_\sigma$, and hence $[ \alpha_\sigma ]$ is trivial.
\end{proof}

\begin{remark}
In fact, one can loosen this condition to having one of $L_1$ or $L_2$ ``lie'' in the 1-block, in the sense that the only 0-trace touching it is the one associated to $b$.  This only matters for Case 2, where $L_\sigma$ was marked by $v_-$:  in such a situation, one marks the loop which is only 0-traced by $b$ with $v_+$, and the other by $v_-$, and the same proof holds.  When both such loops are 0-traced by traces other than $b$, one has potentially many different nonzero edge differentials, and \emph{a priori} one is not guaranteed a method of cancelling these unwanted factors.   
\end{remark}

So, every trace touching a loop in the 1-block must be a 1-mergetrace for a state cycle to represent a nontrivial homology class.  The next proposition gives some restrictions on how the loops of the 1-block can be marked:

\begin{prop}
\label{onetracepair-trivial}
Let $\sigma$ be a state in a diagram $D$, and $\alpha_\sigma$ an associated ET state which represents a cycle in Khovanov homology.  Suppose that there is some pair of loops in $\sigma$ which are only connected by 1-traces.  If those two loops are marked by $v_-$ in $\alpha_\sigma$ then $[ \alpha_\sigma ] $ is trivial.
\end{prop}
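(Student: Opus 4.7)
The plan is to mimic the strategy of Proposition \ref{adequatestatesonly}: choose a crossing whose smoothing change will produce a new state $\sigma_b$ in which $\alpha_\sigma$ is hit by a single nontrivial edge differential, so $\alpha_\sigma$ becomes a boundary.

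Let $b$ be a crossing corresponding to one of the 1-traces connecting $L_1$ and $L_2$, and let $\sigma_b$ be the state obtained from $\sigma$ by replacing the 1-smoothing at $b$ with a 0-smoothing. Since $b$ is a mergetrace between $L_1$ and $L_2$ in $\sigma$, changing its smoothing merges $L_1$ and $L_2$ into a single loop $L$ in $\sigma_b$, while leaving every other loop of $\sigma$ unchanged. Define $\beta$ to be the ET state over $\sigma_b$ in which $L$ is marked by $v_-$ and every other loop carries the same marking as in $\alpha_\sigma$. The edge differential $d_b$ from $\sigma_b$ to $\sigma$ corresponds to the pinch cobordism $L \to L_1 \sqcup L_2$, which is $\Delta$. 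Since $\Delta(v_-) = v_- \otimes v_-$ and $L_1,L_2$ are both marked by $v_-$ in $\alpha_\sigma$, we get $d_b(\beta) = \alpha_\sigma$ up to sign, with the identity acting elsewhere.

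The key step is then verifying that every other outgoing edge differential from $\beta$ vanishes, so that $d(\beta) = \pm \alpha_\sigma$. The outgoing edges from $\sigma_b$ correspond to the 0-traces of $\sigma_b$: the new 0-trace at $b$ (already accounted for) together with all the 0-traces already present in $\sigma$. For such a carry-over 0-trace $c$, Proposition \ref{zeromergingcycle} guarantees that $c$ was a 0-mergetrace in $\sigma$ between two loops both marked $v_-$ in $\alpha_\sigma$. The hypothesis that \emph{only} 1-traces connect $L_1$ and $L_2$ forbids $c$ from having one endpoint on $L_1$ and one on $L_2$; and since $c$ was a mergetrace (not a pinchtrace) in $\sigma$, its two endpoints cannot both lie on $L_1$ or both on $L_2$ either. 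Consequently $c$ remains a mergetrace in $\sigma_b$ between two distinct loops, both still marked by $v_-$ in $\beta$, so $d_c(\beta) = \mu(v_- \otimes v_-) = 0$.

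Combining these observations, $d(\beta) = \pm \alpha_\sigma$, which exhibits $\alpha_\sigma$ as a boundary and proves $[\alpha_\sigma]$ is trivial. The only subtle point is the case analysis on what $c$ looks like after the merger $L_1 \cup L_2 \rightsquigarrow L$; the hypothesis on $L_1, L_2$ together with the state-cycle condition on $\sigma$ ensures no 0-trace can turn into a pinchtrace or become a mergetrace onto two loops marked inconsistently, so no spurious nonzero edge differentials appear. This is the main technical content; everything else is immediate from Table \ref{edge-differential} and Proposition \ref{zeromergingcycle}.
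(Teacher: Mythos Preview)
Your proof is correct and follows essentially the same approach as the paper's: change the smoothing at one of the connecting 1-traces to obtain $\sigma_b$, mark the merged loop by $v_-$ to get $\beta$, and observe that the only nonzero outgoing edge differential from $\beta$ is the $\Delta$ map at $b$, which returns $\alpha_\sigma$. Your case analysis on why the carried-over 0-traces remain mergetraces in $\sigma_b$ is slightly more explicit than the paper's one-line justification, but the content is the same.
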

\begin{proof}
Let $b$ denote one of the 1-traces between the two loops in question, and $\sigma_b$ the state gotten by changing the smoothing for $b$ to a 0-smoothing, and otherwise retaining the smoothing choices of $\sigma$.  Since $b$ was a 1-mergetrace in $\sigma$, $b$ becomes a 0-pinchtrace in $\sigma_b$; all other 0-smoothings in $\sigma_b$ remain 0-mergetraces, since there were no 0-traces between the two loops that have been merged by changing $b$.

Let $\beta$ be the ET state for $\sigma_b$ gotten by marking the newly merged loop by $v_-$, and all other loops by their markings on $\alpha_\sigma$.  Since $\alpha_\sigma$ is a state cycle, every 0-tracing loop is a mergetrace between two loops marked by $v_-$.  This same situation holds for $\beta$ for every 0-trace except $b$, since no 0-tracing loop of $\sigma$ was altered by changing $b$.  Let the edge differential associated to $b$ be denoted $d_b$; then every edge differential on $\beta$ except $d_b$ will remain 0, as it each comes from a mergetrace between two loops marked by $v_-$.  In contrast, the edge differential $d_b$ associated to this 0-trace is nonzero:  examining Table \ref{edge-differential} one sees that $d_b$ takes the $v_-$ on the merged loop to $v_- \otimes v_-$.  So $d_b(\beta)$ is $\alpha_\sigma$, since edge differentials act as the identity outside of the changed part of the state.  Because every other edge differential was zero, $d(\beta) = \pm \alpha_\sigma$.  Therefore $[ \alpha_\sigma ]$ is trivial.
\end{proof}   

\begin{remark}
This is a generalization of Plamenevskaya's triviality condition for her transverse invariant, but the method of proof is different.  Compare to Proposition 3 of ~\cite{olga}.
\end{remark}

Even a single 0-mergetrace between the loops of Proposition \ref{onetracepair-trivial} has the chance to make $[ \alpha_\sigma ]$ nontrivial, though, depending on how the rest of the cube of resolutions is structured.  Each extra 0-mergetrace gives a new 0-pinchtrace, and another nonzero edge differential to complicate the situation.

Adjacency of a pair of $v_-$ loops is not the only problem with $v_-$ loops in the 1-block:  in fact, if there is a pair of $v_-$ loops that can be joined by a path of 1-traces in the 1-block, then the ET state is trivial: 

\begin{prop}
\label{prop-vplus-1block}
Let $\sigma$ be a state in a diagram $D$, and $\alpha_\sigma$ an associated ET state which represents a cycle in Khovanov homology.  Suppose that in $\alpha_\sigma$ there are two loops $L_\alpha$ and $L_\gamma$ marked by $v_-$, and that there exists a path of 1-traces $\{b_1, b_2, \ldots, b_n\}$ between these loops, so that every loop in this path (including $L_\alpha, L_\gamma$) lies in the 1-block.  Then $[ \alpha_\sigma ]$ is trivial.
\end{prop}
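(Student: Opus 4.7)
The plan is to induct on the length $n$ of the 1-trace path. The base case $n=1$ falls out of Proposition \ref{onetracepair-trivial}: loops in the 1-block are, by definition, touched by no 0-traces, so two 1-block loops are automatically ``only connected by 1-traces'' in the sense required by that proposition, and when both are marked $v_-$ it applies directly.

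For the inductive step, label the path as $L_\alpha = L_0, L_1, \ldots, L_n = L_\gamma$ with $b_i$ a 1-trace between $L_{i-1}$ and $L_i$, and split on the marking of the first intermediate loop $L_1$. If $L_1$ is marked $v_-$, the base case applied to the adjacent pair $L_\alpha, L_1$ already concludes the argument. So the substantive case is when $L_1$ is marked $v_+$.

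In that case I would exhibit an explicit cobounding chain $\beta$. Let $\sigma'$ be the state obtained by switching the smoothing at the crossing underlying $b_1$ from $1$ to $0$; this merges $L_\alpha$ and $L_1$ into a single loop $L^*$ and turns $b_1$ into a 0-pinchtrace on $L^*$. Because $L_\alpha, L_1$ lay in the 1-block, no 0-traces touched them in $\sigma$, so every 0-trace of $\sigma'$ other than $b_1$ remains a 0-mergetrace between the same pair of $v_-$-marked loops as before. Let $\beta$ be the ET state on $\sigma'$ that marks $L^*$ with $v_+$ and keeps every other loop's marking from $\alpha_\sigma$. Then the only potentially nonzero outgoing edge differential on $\beta$ is the pinch $d_{b_1}$, which via $\Delta(v_+) = v_+\otimes v_- + v_-\otimes v_+$ produces two terms: one recovers $\alpha_\sigma$ exactly, and the other gives an ET state $\alpha''$ on $\sigma$ that differs from $\alpha_\sigma$ only by swapping the markings on $L_\alpha$ and $L_1$ (so $L_\alpha \mapsto v_+$, $L_1 \mapsto v_-$).

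This $\alpha''$ is still a state cycle by Proposition \ref{zeromergingcycle}, since both swapped loops lie in the 1-block and so are untouched by any 0-trace; moreover, $\alpha''$ now has $v_-$ on $L_1$ and on $L_\gamma$, connected through the 1-block by the shorter 1-trace path $b_2, \ldots, b_n$ of length $n-1$. The inductive hypothesis then yields $\alpha'' = d(\gamma)$, so $\alpha_\sigma = \pm d(\beta) - d(\gamma)$ is a boundary, finishing the induction. The delicate point, and the main thing to verify carefully, is that the 1-block's total absence of 0-traces simultaneously (a) forces every edge differential on $\beta$ other than $d_{b_1}$ to vanish, and (b) guarantees that the leftover term $\alpha''$ remains a state cycle on the original diagram and satisfies the hypothesis of the proposition for a strictly shorter path.
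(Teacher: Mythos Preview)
Your proof is correct and is essentially the inductive repackaging of the paper's argument: the paper writes out the same construction as an explicit telescoping sum of elements $\beta_i$ (each obtained by switching one 1-trace $b_i$ to a 0-smoothing and marking the merged loop $v_+$), while your induction peels off one such $\beta$ at a time and invokes the hypothesis on the shorter path. The key step---that the 1-block condition kills every outgoing edge differential on $\beta$ except the pinch $d_{b_1}$, whose $\Delta(v_+)$ output produces $\alpha_\sigma$ plus a state cycle with the $v_-$ shifted one loop along the path---is identical in both.
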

\begin{proof}
Essentially, we will construct a telescoping series of boundaries, finishing with the kind of boundary constructed in Proposition \ref{onetracepair-trivial}.  We will assume that the path chosen is minimal, in the sense that every loop between $L_\alpha$ and $L_\gamma$ in the path is marked by $v_+$ in $\alpha_\sigma$.  Furthermore, Proposition \ref{adequatestatesonly} lets us assume every 1-trace in this path is a mergetrace, as otherwise we already know that $[ \alpha_\sigma ]$ is trivial. So, let's establish some notation.

Let $\sigma_i$ be the state where crossing $b_i$ has been 0-smoothed, and all other smoothings match those of $\sigma$.  Let $d_i$ be the edge differential from $\sigma_i$ associated to the trace $b_i$.  Let $L_1$ be $L_\alpha$, and $L_i$ the loop besides $L_{i-1}$ which trace $b_{i-1}$ touches:  $L_\gamma$ then becomes $L_{n+1}$.  Furthermore, note that in $\sigma_i$, the loops $L_i$ and $L_{i+1}$ are merged, while every other loop remains the same.  Denote this merged loop in $\sigma_i$ by $M_i$.

Let $\beta_i$ be the ET state associated to $\sigma_i$ where the loops $L_1$ and  $M_i$ are marked by $v_+$ and all other loops match the marking of $\alpha_\sigma$.  Let $\delta_i$ be the ET state associated to $\sigma$ where $L_1$ and $L_i$ are marked by $v_+$, $L_{i+1}$ is marked by $v_-$, and all other loops are marked as in $\alpha_\sigma$.  See Figure \ref{figure-prop4_3-delta} for an illustration.

\begin{figure}[ht!]
\centering
\includegraphics{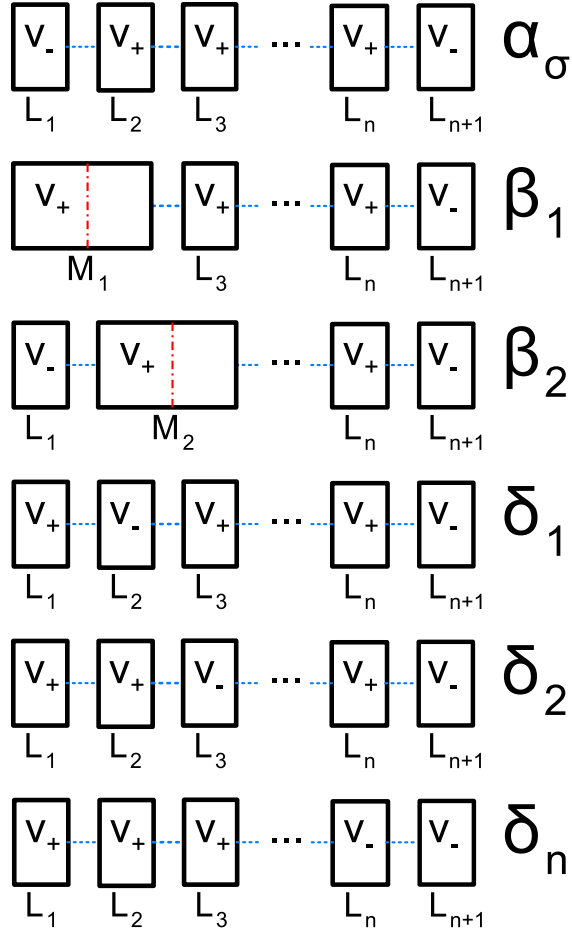}
\caption{ET states showing relevant loops for $\alpha_\sigma$, and the $\beta_i$ and $\delta_i$.}
\label{figure-prop4_3-delta}
\end{figure}

Because the only traces between each pair $L_i$ and $L_{i+1}$ are 1-traces by assumption, the only new edge differential out of $\beta_i$ will be $d_i$.  Furthermore, since $L_i, L_{i+1}$ lie in the 1-block, $d_i$ is the only edge differential out of $\beta_i$ which involves the merged loop $M_i$.  This means that all the other edge differentials remain zero:  none of the 0-tracing loops from $\alpha_\sigma$ are marked differently, and none of the 0-traces from $\alpha_\sigma$ have been changed in $\beta_i$, so that each edge differential other than $d_i$ comes from a mergetrace between two loops marked by $v_-$.

Consulting Table \ref{edge-differential}, we see that $d_i$ acting on $\beta_i$ will take the $v_+$ on loop $M_i$ to a pair of ET states, where $v_- \otimes v_+ + v_+ \otimes v_-$ suggests the marking on loops $L_i$ and $L_{i+1}$, and otherwise the markings match $\alpha_\sigma$.  In particular, writing only the tensor elements for the loops $L_i$, and marking the cobordant loops in parentheses:
\begin{align}
d_1(\beta_1) &= (v_- \otimes v_+) \otimes \cdots \otimes v_- + (v_+ \otimes v_-) \otimes \cdots \otimes v_- \\
&=\alpha_\sigma + \delta_1 \\
d_2(\beta_2) &= v_+ \otimes (v_- \otimes v_+) \otimes \cdots \otimes v_- + v_+ \otimes (v_+ \otimes v_-) \otimes \cdots \otimes v_- \\
&= \delta_1 + \delta_2 \\
d_i(\beta_i) &= \delta_{i-1} + \delta_i \\
\sum \limits_{i=1}^{n} (-1)^{i+1} d_i(\beta_i) &= \alpha_\sigma + (-1)^{n+1} \delta_n
\end{align}

Now, note that in $\delta_n$ we have an ET state where there are loops $L_n$, $L_{n+1}$ marked with $v_-$ and joined only with 1-traces.  The proof of Proposition \ref{onetracepair-trivial} gives us an explicit boundary for $\delta_n$:  let $\gamma$ be the ET state associated to $\sigma_n$ where $M_n$ is marked by $v_-$, $L_1$ is marked by $v_+$, and other loops are marked as in $\alpha_\sigma$.  Then $d(\gamma) = \pm \delta_n$.  Adding this to our sum above gives an explicit boundary for $\alpha_\sigma$, modulo some sign changes which come from the signs of the edge differentials when passing from $d_i$ to $d$.  Therefore, $[ \alpha_\sigma ]$ is trivial. 
\end{proof}

\begin{remark}
The restriction that each loop in this path must lie in the 1-block is needed to guarantee triviality.  Even a single 0-trace touching one of the loops will give a second potential nonzero edge differential for this construction that can obstruct triviality, as the following example shows:
\end{remark}

\begin{figure}[ht!]
\centering
\includegraphics{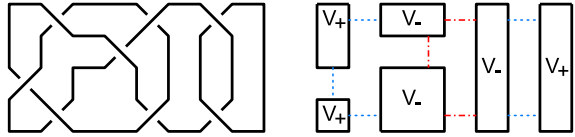}
\caption{On the left is a diagram for a link, and on the right is a particular enhanced state giving a state cycle.  Direct calculation shows that this state cycle represents a nontrivial homology class.}
\label{figure-prop4_3-1block-required}
\end{figure}

\begin{example}
Consider the link diagram and enhanced state for that diagram shown in Figure \ref{figure-prop4_3-1block-required}.  The two leftmost loops marked by $v_-$ can be joined by a path of loops joined by 1-traces, but direct calculation shows that the associated homology class is nontrivial.  So, the extra 0-traces coming off the starting and ending loops can sometimes obstruct triviality.
\end{example}

Using these propositions, we can give a fairly restricted picture of what enhanced states can represent nontrivial homology classes.  Roughly speaking, the underlying state must be ``almost'' adequate, and ``most'' loops of the 1-block must be marked by $v_+$, while all 0-tracing loops must be marked by $v_-$.  To be precise about what ``most'' loops means, we will need to introduce a number of definitions related to a graph theoretic interpretation of the traced states.

\begin{defn}
Given a state $\sigma$, the associated \emph{state graph} $\Gamma_\sigma$ is constructed by taking the loops of $\sigma$ to be vertices, and the traces to be edges.
\end{defn}

\begin{remark}
In general this is really a pseudograph, because an edge may start and end at the same vertex if the state is not adequate, and two vertices may be joined by multiple edges.  When the state is adequate, the state graph is an honest multigraph.
\end{remark}

The state graph, and subgraphs of the state graph, turns out to be a useful tool for describing conditions for triviality and nontriviality of a state cycle.  For example, consider the following definition:

\begin{defn}
We say that a state is \emph{even} if every circuit in its state graph $\Gamma_\sigma$ has even length.  Otherwise, we say that the state is \emph{odd}.
\label{def-evenstate}
\end{defn}

\begin{remark}
A pinchtrace gives a closed path of length 1 from the loop it joins to itself.  So, an even state is also an adequate state.
\end{remark}

\begin{remark}
A graph theory consequence is that if $\sigma$ is even, $\Gamma_\sigma$ is 2-colorable.  Namely, there is an assignment of a color to every vertex, using only 2 colors, so that two vertices joined by an edge have distinct colors.  One can view a 2-coloring as a sign choice for something associated to each vertex, so that one color represents ``+'' and the other represents ``-''.
\end{remark}

\begin{example}
The primary example of an even state is the Seifert state $\sigma_s$, the state gotten by choosing the smoothing for each crossing corresponding to that used in Seifert's algorithm.  $\sigma_s$ is also called the oriented resolution, because the smoothing choice is the one consistent with the orientation of the link:  positive crossings are 0-smoothed, and negative crossings are 1-smoothed, as shown in Figure \ref{figure-oriented-smoothing}.  For an example of a Seifert state and graph, see Figure \ref{figure-seifert-stategraph}.

\begin{figure}[ht!]
\centering
\includegraphics{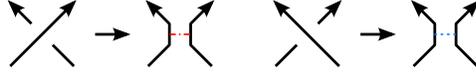}
\caption{The smoothing choice for Seifert's algorithm is the smoothing consistent with orientation.}
\label{figure-oriented-smoothing}
\end{figure}

The reason the Seifert state is even has to do with the surface $\Sigma$ associated to this state by Seifert's algorithm, formed by gluing disks to each loop of the state, and twisted bands to every trace which match the original crossing.  An odd path of traces means there is an annulus with an odd number of twists contained in the surface (gotten by following the twisted bands associated to the traces of the path), contradicting orientability of the Seifert's algorithm surface.
\label{ex-seifert-state}
\end{example}

\begin{figure}[ht!]
\centering
\includegraphics{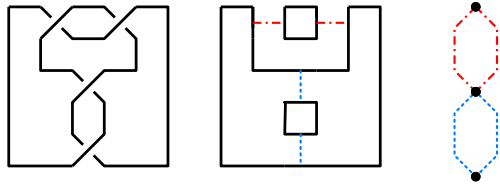}
\caption{On the left is a diagram of the figure 8.  In the middle is its Seifert state.  On the right is the state graph for this state.  By inspection this is an even state.}
\label{figure-seifert-stategraph}
\end{figure}

The fact that even states can be 2-colored turns out to be quite useful:  interpreting this 2-coloring as a sign convention can be used to construct the generators of Lee homology, which is the perspective we work with in analyzing this generator in Theorem \ref{thm-s-singleloop}.  But, for general state cycles, we will want to consider similar notions of even and oddness, restricted to the 1-block.  To describe these notions, we will examine a particular subgraph of the state graph:

\begin{defn}
Given a state $\sigma$, the associated \emph{1-block graph} $\Gamma_1$ is constructed by taking the loops of the 1-block of $\sigma$ to be vertices, and the 1-traces to be edges.
\end{defn}

In general, $\Gamma_1$ will not be connected, and its connected components turn out to be the natural setting for describing restrictions on the 1-block.

\begin{defn}
A \emph{connected component of the 1-block} refers to the set of loops from a connected component of $\Gamma_1$.  We say that a connected component of the 1-block is \emph{even} if every circuit in that component of $\Gamma_1$ has even length, and \emph{odd} otherwise.
\end{defn}

Now we are ready to state the classification theorem precisely.  Restrictions (S1) and (S2) give the precise definition of an ``almost'' adequate state, and restrictions (L2)-(L4) spell out the full obstructions to having loops in the 1-block marked by $v_-$:

\begin{thm}[State Cycle Classification]
For a state cycle $\alpha$ based on state $\sigma$ to represent a nontrivial homology class in Khovanov homology, it must satisfy the following restrictions:
\begin{itemize}
\item[(S1)] $\sigma$ must be 0-merging.
\item[(S2)] $\sigma$ can have no 1-pinchtraces touching loops in its 1-block.
\item[(L1)] 0-tracing loops of $\alpha$ must be marked by $v_-$.
\item[(L2)] No pair of loops both marked by $v_-$ can be joined by only 1-traces.
\item[(L3)] Every loop in an odd connected component of the 1-block must be marked by $v_+$.
\item[(L4)] At most one loop in an even connected component of the 1-block may be marked by $v_-$; all other loops in that component must be marked by $v_+$.
\end{itemize}
Furthermore, other than condition (L2), changing which loop is marked by $v_-$ for a fixed even component from (L4) only changes the sign of the resulting homology class.
\label{thm-statecycle-classification}
\end{thm}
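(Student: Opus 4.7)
The plan is to dispatch each restriction in turn using the propositions established earlier in the section, with the bulk of the real work concentrated in restriction (L3).

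Restrictions (S1) and (L1) are immediate from Proposition \ref{zeromergingcycle}: these are exactly the necessary and sufficient conditions for $\alpha$ to represent a cycle in the first place, so any violation makes $\alpha$ fail to be a chain-level cycle at all. Restriction (S2) is Proposition \ref{adequatestatesonly}, and restriction (L2) is Proposition \ref{onetracepair-trivial}. For restriction (L4), I would apply Proposition \ref{prop-vplus-1block}: two loops of the 1-block marked $v_-$ in the same connected component of $\Gamma_1$ are, by connectedness, joined by a path of 1-traces lying entirely in the 1-block, and that proposition then yields triviality.

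The substantive new content is (L3). Suppose $L_0$ is a loop marked $v_-$ in an odd connected component $C$ of the 1-block; by (L4) I may assume $L_0$ is the only $v_-$ loop in $C$. Since $C$ has an odd circuit, there is a closed walk from $L_0$ back to itself of odd length lying inside $C$, constructed by concatenating a simple path from $L_0$ to some vertex on an odd circuit, one traversal of that circuit, and the reverse path back to $L_0$. The idea is to adapt the telescoping construction in the proof of Proposition \ref{prop-vplus-1block} to this closed walk: building intermediate states $\beta_i$ and $\delta_i$ one 1-trace at a time along the walk, the odd parity of its length should force the alternating-sign telescoping sum to identify $\alpha_\sigma$ itself with an explicit boundary, rather than leaving a nontrivial residual term as occurs in the open-path setting of \ref{prop-vplus-1block}.

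The main obstacle is that Proposition \ref{prop-vplus-1block} relies essentially on having distinct path endpoints $L_\alpha \neq L_\gamma$, so coherent bookkeeping of signs and markings when the walk closes up on $L_0$ (and possibly revisits other loops along the way, since closed walks in non-tree graphs may force repetition) will require reindexing the construction in terms of the 1-traces used rather than the loops they connect. For the final sign-independence clause of the theorem, I would run the same telescoping machinery along a minimal path in $\Gamma_1$ between two candidate $v_-$ loops $L$ and $L'$ in an even component; because the even component is bipartite, the resulting parity is well-defined, and this makes the identity $[\alpha_L] = \pm [\alpha_{L'}]$ modulo boundaries independent of the path chosen, with the sign controlled by the 2-coloring of the component.
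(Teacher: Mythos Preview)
Your proposal is correct and follows essentially the same route as the paper: the earlier propositions handle (S1), (S2), (L1), (L2), and the two-$v_-$ case of (L3)/(L4), while the single-$v_-$ case of (L3) and the sign-independence clause are both handled by telescoping sums along walks in the 1-block. The one point worth flagging is that the anticipated bookkeeping obstacle is milder than you suggest: indexing the $\beta_i$ by the traces of the odd closed walk (exactly as you propose) makes repeated loops harmless, and the telescoping collapses to $\alpha_1 + (-1)^{n+1}\alpha_{n+1} = 2\alpha$ rather than to $\alpha$ itself, so the argument uses that $2$ is invertible in $\Q$.
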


\begin{proof}
Most of the tools have been assembled already.  First we will deal with the restrictions on the underlying state:

\noindent $\mathbf{(L1)}:$ Proposition \ref{zeromergingcycle} tells us that the underlying state for any state cycle must be 0-merging.

\noindent $\mathbf{(L2)}:$ Proposition \ref{adequatestatesonly} tells us that every 1-trace between loops in the 1-block must be a mergetrace.

Now, we will address the restrictions on the loop markings.

\noindent $\mathbf{(S1)}:$ Proposition \ref{zeromergingcycle} tells us that the 0-tracing loops of any state cycle must be marked by $v_-$.

\noindent $\mathbf{(S2)}:$ Proposition \ref{onetracepair-trivial} obstructs such a pair of loops marked by $v_-$ which are joined by only 1-traces.

\noindent $\mathbf{(S3)}:$ If there are two or more loops in this odd component marked by $v_-$, then there is a path of 1-traces from one to the other involving only loops in the 1-block, so Proposition \ref{prop-vplus-1block} tells us $[ \alpha ] =0$.

Suppose only one loop in the odd connected component is marked by $v_-$.  Denote this loop by $L_a$.  Since the component is odd, there is some closed path of 1-traces in that component with odd length.  Let this path be $P$.  We can construct a path $P_a$ from $L_a$ to itself of odd length by taking a path $P'$ from $L_a$ to any loop of this odd path, follow the odd path $P$, and then follow the reverse of path $P'$ back to $L_a$.  The length of $P_a$ is then twice the length of $P'$ plus the length of $P$, and so must be odd.

Let's view $P_a$ as a sequence $T_i$ of traces going from loop $L_i$ to $L_{i+1}$, of length $n$.  To each trace $T_i$, we can associate an enhanced state $\beta_i$, in the following way.  The underlying state for $\beta_i$ is $\sigma$, modified by changing the trace $T_i$ from 1 to a 0.  The loops of $\beta_i$ will have a merged loop $M_i$ corresponding to the change of the mergetrace $T_i$ to a pinchtrace, and will otherwise match the loops of $\sigma$.  So, we will label the loops of $\beta_i$ by the matching label of $\alpha$ for every loop that was unchanged, and by $v_+$ for the merged loop.  See Figure \ref{figure-thm-statecycle-betas}.

\begin{figure}[ht!]
\centering
\includegraphics{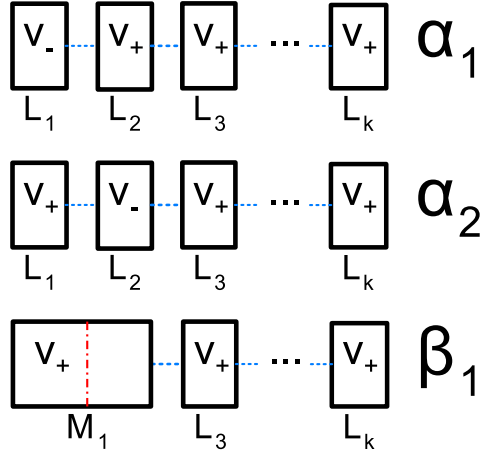}
\caption{Schematic showing the local picture for the $\alpha_i$ and $\beta_i$ in Case 1b.}
\label{figure-thm-statecycle-betas}
\end{figure}

Following the loop numbering of trace $T_i$ joining loop $L_i$ to loop $L_{i+1}$, let $\alpha_i$ denote the enhanced state where $\sigma$ is marked so that a $v_-$ is on loop $L_i$, and all other loops are marked by $v_+$, as shown in Figure \ref{figure-thm-statecycle-betas}.  By this convention, $\alpha_1 = \alpha_{n+1} = \alpha$.  By construction, $d(\beta_i) = \pm (\alpha_i + \alpha_{i+1})$:  there is only one edge differential from each state to the all-1 state, and we are in the case $\Delta(v_+) = v_- \otimes v_+ + v_+ \otimes v_-$ of Table \ref{edge-differential}.  For our purposes, we can ignore this sign associated to the edge differential during calculations and correct by the correct sign at the end.

With this setup, the claim is that $2 \alpha = \sum \limits_{i=1}^{n} (-1)^{i+1} d(\beta_i) $.  The reason is that this is a telescoping series - observe that $d(\beta_i) - d(\beta_{i+1}) = (\alpha_i + \alpha_{i+1}) - (\alpha_{i+1} + \alpha_{i+2}) = \alpha_i - \alpha_{i+2}$.  So, since the path is of odd length, the alternating sum collapses to $\alpha_1 + \alpha_{n+1} = 2 \alpha$, as claimed.  It follows that $[ \alpha ]$ is trivial.

\noindent $\mathbf{(S4)}:$ If there are two or more loops in this component marked by $v_-$, then there is a path of 1-traces from one to the other involving only loops in the 1-block, so Proposition \ref{prop-vplus-1block} tells us $[ \alpha ] =0$.

The last claim to check is that if some loop in such an even component of the 1-block is marked by $v_-$, then the choice of which loop in that component to mark $v_-$ only changes the sign of the resulting homology class.

Label the loops of this component of the 1-block by $L_1$ to $L_n$, and the traces by $T_1$ to $T_m$.  Let $\alpha_i$ be the enhanced state where loop $L_i$ is marked by $v_-$, the other $L_k$ are marked by $v_+$, and loops outside this component are marked as in $\alpha$.  Let $\sigma_j$ be the state where $\sigma$ is modified by changing trace $T_j$ from 1 to 0.  Because of restriction (L2), we know that $T_j$ is a mergetrace in $\sigma$, so $\sigma_j$ will have one loop $M_j$ which was merged by the change of smoothings, and otherwise will match the loop structure of $\sigma$.  Let $\beta_j$ be the enhanced state where $\sigma_j$ is the underlying state, $M_j$ and the other $L_k$ are all marked by $v_+$, and the other loops retain their marking from $\alpha$.

Given $i$ and $j$, there is some path of traces from $L_i$ to $L_j$ in this connected component of the 1-block.  Let this path of traces be $T'_1, \ldots, T'_p$, and trace $T'_h$ join loop $L'_h$ to loop $L'_{h+1}$.  Similarly, let $\alpha'_h$ and $\beta'_h$ be the obvious analogues in this renumbering of loops and traces.  By this setup, $d(\beta'_h) = \alpha'_h + \alpha'_{h+1}$, and we have the familiar telescoping series $\sum (-1)^{h+1} d(\beta'_h) = \alpha'_1 \pm \alpha'_p = \alpha_i \pm \alpha_j$.  So, up to sign any choice of $\alpha_i$ represents the same homology class, as claimed.
\end{proof}

\begin{remark}
While this classification theorem narrows down the candidates for nontrivial state cycles, it is a necessary condition, not a sufficient condition.  So, one still must check that a particular candidate represents a nontrivial homology class in practice.   
\end{remark}

\section{Maps acting on State Cycles}

This section examines how state cycles interact with various homomorphisms and spectral sequences.  Here we follow the paradigm that sometimes one can understand objects better by studying the maps which act on them.

The first maps we will examine are the Jacobsson homomorphisms, which give a way to relate Khovanov homology classes of two different links.  It turns out that in certain situations, Jacobsson homomorphisms can take state cycles to state cycles.  Suppose $\alpha, \beta$ are state cycles, $[ \alpha ] \neq 0$, and $\Psi(\beta) = \alpha$, for some Jacobsson homomorphism $\Psi$.  It follows that $[ \beta ] \neq 0$, since homomorphisms take 0 to 0.  In this way, Jacobsson homomorphisms can let us ``lift'' nontrivial state cycles to nontrivial state cycles, under the right setup.

The second kind of map we will examine is the Lee homomorphism $\Phi_{Lee}$.  Because of the convergence of an associated spectral sequence, in many cases $\Phi$ gives a way to organize the homology classes of a fixed diagram $D$ into pairs of the form $(\alpha, \Phi(\alpha))$, of relative bidegree (1,4).  We will see later that when the $\alpha$ of such a Lee pair is a nontrivial state cycle, there is a natural way to lift the Lee pair of one diagram to one of a related diagram.  So, we examine a special case where we can guarantee the existence of such a nontrivial Lee pair.

Finally, we conclude the section by examining a relationship between state cycles based on the Seifert state, and Rasmussen's $s$-invariant, which comes from the Lee spectral sequence.  In certain cases, we can use this information to conclude that the state cycle based on the Seifert state represents a nontrivial homology class. 

\subsection{Jacobsson Homomorphisms} The first maps we will work with are the Jacobsson homomorphisms.  For every orientable cobordism between two links, Jacobsson ~\cite{jacobsson-cob} constructed homomorphisms between the Khovanov homology of the two links and showed this construction is functorial up to sign, meaning that given two isotopic cobordisms $\Sigma_1, \Sigma_2$, the induced homomorphisms are the same up to sign.  These homomorphisms preserve the homological grading, and shift the quantum grading by the Euler characteristic of the cobordism.  In other words, a cobordism $\Sigma$ induces a $(0, \chi(\Sigma))$ homomorphism $\Psi_\Sigma$ on $\Kh(L_1)$, as shown in Table \ref{table-jacobsson-eulershift}.

\begin{table}[ht!]
\begin{displaymath}
\xymatrix@R=8pt{L_1 \ar[r]^*+{\Sigma} & L_2 \\
**[l] \Kh(L_1) \ar[r]^*+{\Psi_\Sigma} & **[r] Kh^{i, j + \chi(\Sigma)}(L_2)
}
\end{displaymath}
\caption{A cobordism $\Sigma$ induces a $(0, \chi(\Sigma))$ homomorphism $\Psi_\Sigma$ on $\Kh(L_1)$.}
\label{table-jacobsson-eulershift}
\end{table}

The construction defines maps at the chain level for each Morse and Reidemeister move, and shows that these chain maps take cycles to cycles, inducing homomorphisms.  Jacobsson's paper follows the Viro convention for marking enhanced states, so we will rewrite the homomorphisms used here in terms of the Bar-Natan generators, for convenience.  The definitions are shown in terms of ET states:  ET states not shown in the definitions will go to 0 under the homomorphisms.  The light turquoise lines in the figures indicate how arcs in the local picture of the diagram complete to loops in the state in question.

Our goal now is to analyze situations in which these homomorphisms take state cycles to state cycles, so that we can have a way of ``lifting'' nontriviality of state cycles from one diagram to another.  In this subsection we will describe a special case where this happens, which we will call \emph{positive modification}.  This construction will later be generalized to \emph{quasipositive modification} in Section \ref{q-p-mod}.

\begin{figure}[ht!]
\centering
\includegraphics{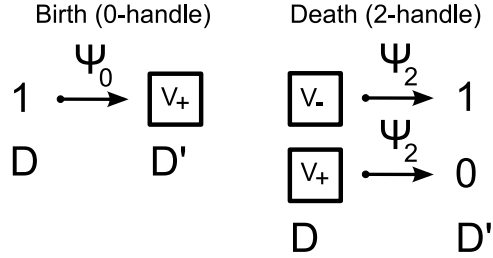}
\caption{Simplest Jacobsson homomorphisms, coming from the Morse moves for birth and death of loops.  Compare to Figure 15 on page 1226 of ~\cite{jacobsson-cob}.}
\label{figure-jacobsson-morse-0}
\end{figure}

\begin{figure}[htb!]
\centering
\includegraphics{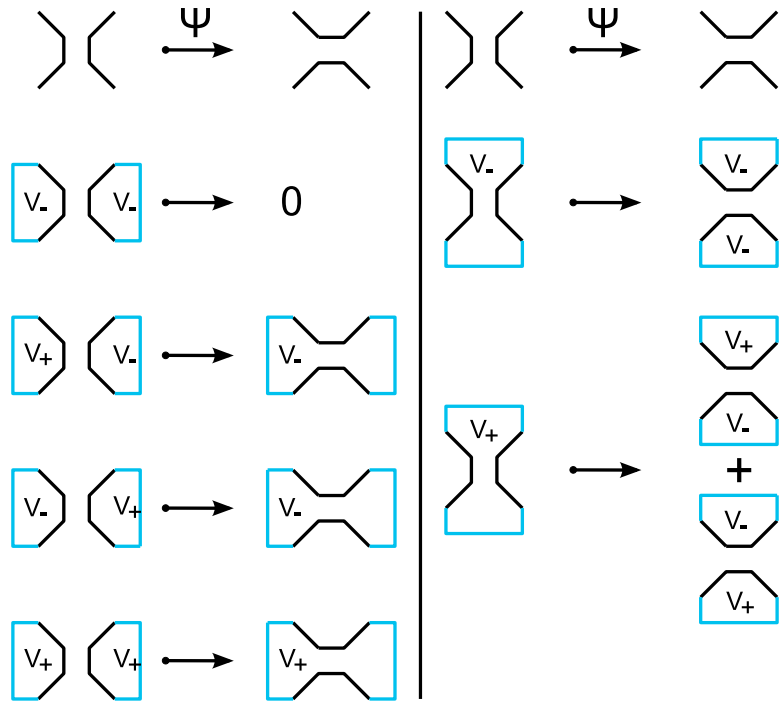}
\caption{Jacobsson homomorphisms coming from the saddle Morse moves.  Note that there are no traces here joining the loops.  Compare to Figure 3 on page 1216 of ~\cite{jacobsson-cob}.}
\label{figure-jacobsson-morse-1}
\end{figure}

\begin{figure}[htb!]
\centering
\includegraphics{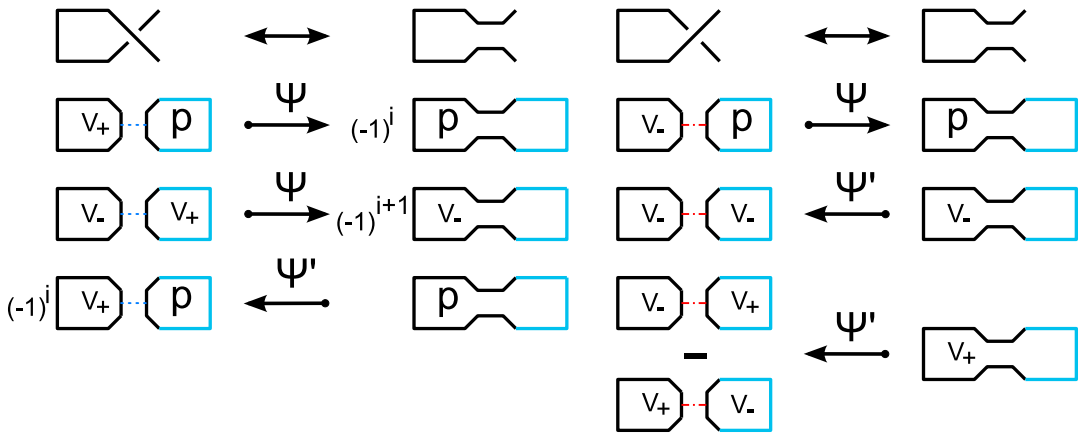}
\caption{Jacobsson homomorphisms coming from negative and positive Reidemeister I, from left to right.  Compare to Figure 16 on page 1228 of ~\cite{jacobsson-cob}.  The $p$ here indicates that either $v_-$ or $v_+$ can be marked on the loop marked by $p$.}
\label{figure-jacobsson-r1}
\end{figure}

\begin{figure}[htb!]
\centering
\includegraphics{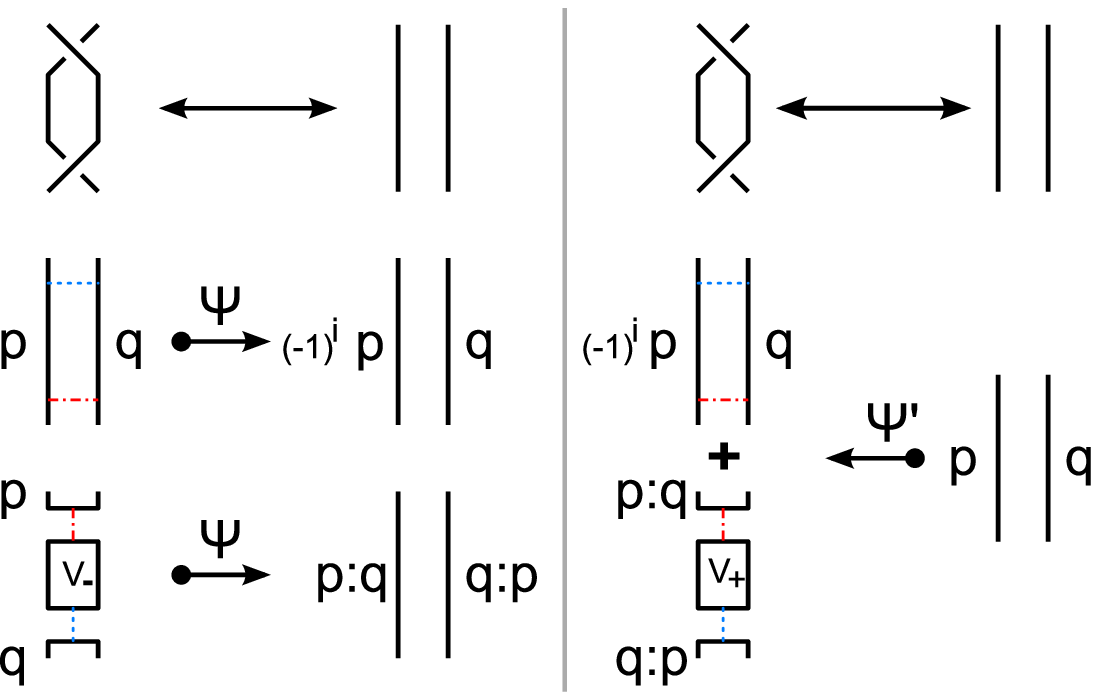}
\caption{Jacobsson homomorphisms coming from Reidemeister II.  The p:q, q:p convention indicates that locally there is a saddle cobordism from the pair of arcs $(p, q)$ to the pair of arcs $(p:q, q:p)$.  Given a choice of generators for $p$ and $q$, the values of $p:q$ and $q:p$ match those of the saddle move from Figure \ref{figure-jacobsson-morse-1} whose initial arcs match $p$ and $q$.  Compare to Figure 17 on page 1229 of ~\cite{jacobsson-cob}.}
\label{figure-jacobsson-r2}
\end{figure}


\begin{defn}
\label{positive-mod-def}
Let $D$ be an oriented diagram, and $\alpha$ an ET state representing a cycle in $Kh(D)$.  We say that an oriented diagram $D'$ is obtained from $D$ by \emph{positive modification} on $D$ compatible with $\alpha$ if there is a positive crossing $b$ in $D'$ so that 0-resolving $b$ yields $D$ as an oriented diagram and the 0-trace of $b$ connects two distinct 0-tracing loops of $\alpha$.  See Figure \ref{figure-pos-mod} for an illustration.
\end{defn}

\begin{figure}[ht!]
\centering
\includegraphics{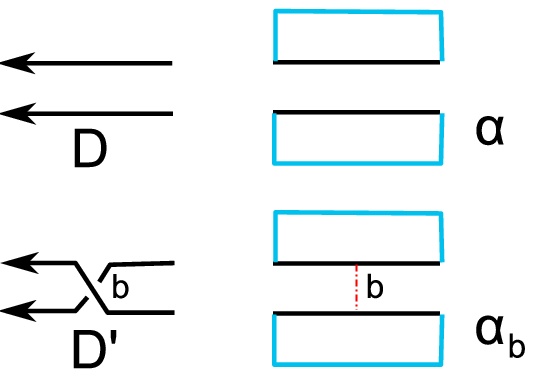}
\caption{Illustration of positive modification at the diagram and state level.}
\label{figure-pos-mod}
\end{figure}

\begin{lemma}
\label{positive-mod-lemma}
Let $D$ be a diagram for a link, and $\alpha$ a state cycle based on state $\sigma$ of $D$.  Consider a diagram $D'$ obtained from $D$ by positive modification compatible with $\alpha$.  Let $\Psi_J$ be the Jacobsson homomorphism induced by the cobordism of 0-resolving this positive crossing $b$ associated to the positive modification.  Then there exists a state cycle $\widetilde{\alpha}$ such that $\Psi_J(\widetilde{\alpha}) = \alpha$.  In particular, if $[ \alpha ]$ is nontrivial, so is $[ \widetilde{\alpha} ]$.
\end{lemma}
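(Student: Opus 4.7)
The plan is to write down $\widetilde{\alpha}$ explicitly, verify it is a state cycle via Proposition \ref{zeromergingcycle}, and show that Jacobsson's map sends it to $\alpha$. The nontriviality conclusion then follows because homomorphisms take $0$ to $0$: if $[\widetilde{\alpha}] = 0$, so would $[\alpha]=\Psi_J([\widetilde{\alpha}])$.

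Concretely, I take $\widetilde{\alpha}$ to be the ET state on $D'$ whose underlying state $\widetilde{\sigma}$ agrees with $\sigma$ at every crossing of $D$ and assigns the 0-smoothing to $b$, with each loop marked exactly as in $\alpha$. Because 0-resolving $b$ recovers $D$, the loops of $\widetilde{\sigma}$ are in bijection with those of $\sigma$, so the marking is well-defined. The 0-traces of $\widetilde{\sigma}$ are those of $\sigma$ together with one new 0-trace from $b$. The old 0-traces are already mergetraces between $v_-$-loops because $\alpha$ is a state cycle. For the new 0-trace, the compatibility hypothesis guarantees it joins two \emph{distinct} 0-tracing loops of $\alpha$, and both endpoints are marked $v_-$ because 0-tracing loops of any state cycle are $v_-$-marked by Proposition \ref{zeromergingcycle}. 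So every 0-trace of $\widetilde{\sigma}$ is a mergetrace between two $v_-$-loops, and Proposition \ref{zeromergingcycle} gives that $\widetilde{\alpha}$ is a state cycle.

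For $\Psi_J(\widetilde{\alpha}) = \alpha$, I use the saddle map description from Figure \ref{figure-jacobsson-morse-1}. The $b$-0-smoothed subcomplex $C_0(D') \subset C(D')$ is canonically identified with $C(D)$ up to Khovanov's standard grading shifts, and Jacobsson's saddle map restricted to $C_0(D')$ realizes precisely this identification. Since $\widetilde{\alpha}$ lies in $C_0(D')$ with the same loop markings as $\alpha$, it maps to $\alpha$ directly. A bigrading sanity check supports this: because $b$ is positive, $n_+(D') = n_+(D) + 1$ forces $q(\widetilde{\alpha}) = q(\alpha) + 1$, and the saddle cobordism has $\chi(\Sigma) = -1$, so Jacobsson's shift by $(0,\chi)$ sends $\Psi_J(\widetilde{\alpha})$ to quantum grading $q(\alpha)$, in agreement with $\alpha$.

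The main obstacle is pinning down the chain-level action of $\Psi_J$ on the $b$-0-smoothed subcomplex. One way is to decompose the cobordism into elementary Morse pieces consistent with Figure \ref{figure-jacobsson-morse-1} and chase the resulting chain maps. Alternatively, one can invoke functoriality of Jacobsson's construction alongside the mapping-cone description of $C(D')$ across the edge differential $d_b$ to see that $\Psi_J$ must restrict to the canonical identification on $C_0(D')$, which is all that is needed since $\widetilde{\alpha}$ lies entirely in this subcomplex.
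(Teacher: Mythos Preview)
Your construction of $\widetilde{\alpha}$ and the verification that it is a state cycle are correct and match the paper's argument exactly. The gap is in the step $\Psi_J(\widetilde{\alpha}) = \alpha$.

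You assert that ``Jacobsson's saddle map restricted to $C_0(D')$ realizes precisely this [canonical] identification'' and then immediately concede in your final paragraph that ``the main obstacle is pinning down the chain-level action of $\Psi_J$ on the $b$-0-smoothed subcomplex,'' offering two possible strategies without executing either. That is precisely the content of the lemma: the claim that the cobordism map acts as the identity on the relevant markings is what needs to be \emph{checked}, not assumed. A grading sanity check is not a substitute for the computation.

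Moreover, your appeal to Figure~\ref{figure-jacobsson-morse-1} alone is misleading: the cobordism ``$0$-resolve the positive crossing $b$'' is \emph{not} a single saddle move between diagrams. The paper decomposes it as a $1$-handle addition followed by undoing a positive Reidemeister~I twist (Figure~\ref{figure-pos-mod-lem}), and then traces the markings through both elementary maps. Under the $1$-handle, the $v_-$ on the relevant loop is sent to $v_- \otimes v_-$ via $\Delta$; under the positive RI removal (Figure~\ref{figure-jacobsson-r1}), the pair of $v_-$-loops joined by the $0$-trace of $b$ is sent back to a single $v_-$-loop. This two-step trace is short, but it is the actual content of the proof, and it uses the RI map from Figure~\ref{figure-jacobsson-r1} in an essential way---the crossing $b$ is removed there, not by a Morse saddle. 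Without carrying this out, your argument stops exactly where the work begins.
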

\begin{proof}
Let $\widetilde{\alpha}$ be the ET state in $D'$ corresponding to $\alpha$:  the loops and markings match $\alpha$, with the only difference being the added 0-mergetrace for the extra crossing $b$.  Because the two loops joined by $b$ are 0-tracing in $\alpha$, they are marked by $v_-$.  Note that since the new 0-trace for $b$ joins two loops marked by $v_-$ in $\widetilde{\alpha}$, the new edge differential for $b$ will also be zero, so that $\widetilde{\alpha}$ is also a state cycle.

\begin{figure}[ht!]
\centering
\includegraphics{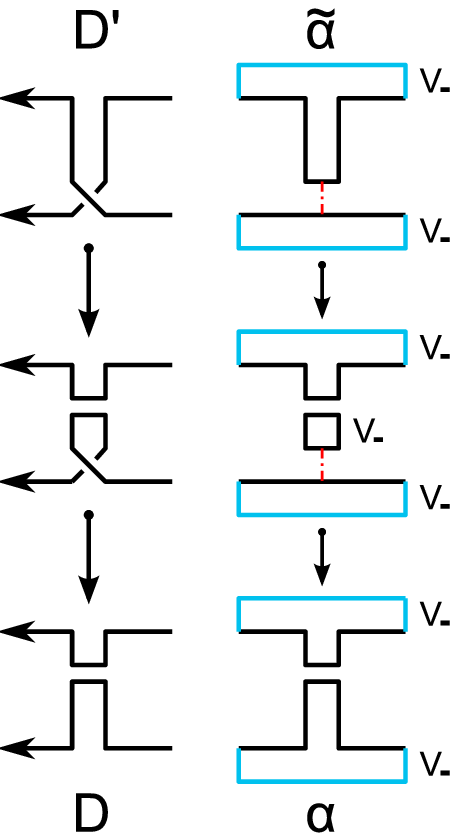}
\caption{Illustration of cobordism involved in Lemma \ref{positive-mod-lemma} at the diagram and state level.}
\label{figure-pos-mod-lem}
\end{figure}

The cobordism between $D'$ and $D$ can be broken down into adding a 1-handle, followed by undoing a positive Reidemeister I twist - see Figure \ref{figure-pos-mod-lem}.  When adding the 1-handle, we are splitting one loop into two, so the associated Jacobsson homomorphism takes the $v_-$ on that loop to $v_- \otimes v_-$.  Then, for undoing the Reidemeister one twist, the Jacobsson homomorphism takes the $v_- \otimes v_-$ pair of loops joined by the 0-trace of $b$ to a single merged loop marked by $v_-$.  So, writing this out in tensors, we have 
\begin{align}
\widetilde{\alpha} = (v_-) \otimes v_- \otimes \cdots & \stackrel{\Psi_1}{\longrightarrow} (v_- \otimes v_-) \otimes v_- \otimes \cdots \tag{1-handle} \\
v_- \otimes (v_- \otimes v_-) \otimes \cdots & \stackrel{\Psi_2}{\longrightarrow}  v_- \otimes (v_-) \otimes \cdots = \alpha \tag{RI+}
\end{align}
See Figure \ref{figure-pos-mod-lem} for an illustrated version of this.
\end{proof}

\begin{remark}
This is a generalization of Plamenevskaya's theorem about positive resolution (see Theorem 4 of ~\cite{olga}) to state cycles, and gives the key idea for ``lifting'' state cycles by Jacobsson homomorphisms.  Section \ref{q-p-mod} will extend Definition \ref{positive-mod-def} to a slightly more general kind of modification, for which we will give an analogue of Lemma \ref{positive-mod-lemma}.
\end{remark}

The Euler characteristic of the cobordism constructed in the lemma is -1, meaning that the quantum grading of $\widetilde{\alpha}$ is 1 higher than that of $\alpha$.  But, this shift is applied uniformly to every lift associated with this cobordism.

\begin{cor}
Suppose there are two state cycles, $\alpha$ and $\beta$, which are compatible with the same positive modification.  Then, the Khovanov bigrading difference between the lifts, $\widetilde{\alpha}$ and $\widetilde{\beta}$, matches the bigrading difference between $\alpha$ and $\beta$.  In particular, this means that if $\alpha$ and $\beta$ lie on two distinct diagonals in $Kh(D)$, then $\widetilde{\alpha}$ and $\widetilde{\beta}$ lie on distinct diagonals in $Kh(D')$, of the same relative separation.
\end{cor}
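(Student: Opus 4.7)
The plan is to reduce the corollary to a uniform bigrading shift calculation that depends only on the modification data, not on the particular state cycle being lifted. Since Lemma~\ref{positive-mod-lemma} gives a lift $\widetilde{\alpha}$ for any state cycle $\alpha$ compatible with the positive modification, and an identical construction produces $\widetilde{\beta}$ from $\beta$, it suffices to check that the bigrading difference $\bigl(t(\widetilde{\alpha}),q(\widetilde{\alpha})\bigr) - \bigl(t(\alpha),q(\alpha)\bigr)$ is independent of the underlying state cycle.

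First I would read off from the description of $\widetilde{\alpha}$ given in the lemma what changes combinatorially when passing from $D$ to $D'$: the diagram $D'$ has exactly one new positive crossing $b$ and no new negative crossings, so $n_+$ increases by $1$ and $n_-$ is unchanged. The underlying state of $\widetilde{\alpha}$ has $b$ 0-smoothed, so the height $h(\sigma)$ is unchanged; and since the $0$-trace of $b$ merely adds a mergetrace between two already-$v_-$-marked loops without adding or removing loops from the state, the multiset of $V$-markings is also unchanged, giving $v_+(\widetilde{\alpha}) = v_+(\alpha)$ and $v_-(\widetilde{\alpha}) = v_-(\alpha)$. Plugging these into the grading formulas in Section~2.3 yields
\begin{align*}
t(\widetilde{\alpha}) - t(\alpha) &= 0, \\
q(\widetilde{\alpha}) - q(\alpha) &= (n_+ + 1) - n_+ = 1.
\end{align*}
This matches the $(0,\chi(\Sigma))=(0,-1)$ shift convention of the Jacobsson homomorphism $\Psi_J$ (with the sign reversed because $\widetilde{\alpha}$ is the preimage, not the image).

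Applying the identical analysis to $\beta$ gives the same shift $(0,1)$ for $\widetilde{\beta}$. Subtracting, we obtain
\[
\bigl(t(\widetilde{\alpha}) - t(\widetilde{\beta}),\, q(\widetilde{\alpha}) - q(\widetilde{\beta})\bigr) = \bigl(t(\alpha) - t(\beta),\, q(\alpha) - q(\beta)\bigr),
\]
which is the first claim. For the diagonal statement, recall that the $\delta$-grading is $\delta = 2t - q$, so a uniform $(0,1)$ shift translates every diagonal by $-1$ but preserves all relative separations; in particular, if $\alpha$ and $\beta$ occupy distinct diagonals in $Kh(D)$ then $\widetilde{\alpha}$ and $\widetilde{\beta}$ occupy distinct diagonals in $Kh(D')$, separated by the same $\Delta\delta$.

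There is essentially no hard step here: the whole argument is that the combinatorial changes induced by positive modification (increment $n_+$, leave loops/markings/height untouched) are the same for every compatible state cycle, so the bigrading shift is a constant depending only on $D \rightsquigarrow D'$. The only thing worth double-checking is that the lifted cycles $\widetilde{\alpha}$ and $\widetilde{\beta}$ genuinely have the same number of loops and the same $v_\pm$-multiset as $\alpha$ and $\beta$ respectively, which follows directly from the explicit recipe for $\widetilde{\alpha}$ in the proof of Lemma~\ref{positive-mod-lemma}.
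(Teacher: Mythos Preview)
Your proof is correct and follows essentially the same reasoning as the paper: the corollary is immediate once one observes that the bigrading shift from $\alpha$ to $\widetilde{\alpha}$ depends only on the modification and not on the particular compatible state cycle. The only cosmetic difference is that the paper phrases this via the Euler characteristic of the associated cobordism (the Jacobsson homomorphism is a $(0,\chi(\Sigma))=(0,-1)$ map, so every lift has quantum grading one higher), whereas you verify the $(0,1)$ shift directly from the combinatorial grading formulas of Section~2.3; both routes are one-line observations yielding the same uniform shift.
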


In theory one could try to check the nontriviality of a cycle by relating it to another known nontrivial state cycle by a more complicated series of Jacobsson homomorphisms.  In practice, this is not so easy to produce.  Even if one had a cobordism between two diagrams, the projection down of some cycle you want to investigate will probably not end up as a state cycle, limiting the option of iterating the process.  It's also very possible for the induced homomorphism to be trivial on your cycle of interest, which gives no information.

\subsection{The Lee Homomorphism}  Lee's homomorphism $\Phi_{Lee}$, which serves as a differential on $ Kh(L) $ and induces a spectral sequence ~\cite{lee}, is the next map we will analyze, to see what we can learn about state cycles.  For knots, the Lee spectral sequence converges to $\Q \oplus \Q$; by work of Rasmussen ~\cite{rasmussen-slice}, there are two homology classes in $Kh$ which survive to those two copies of $\Q$ of bigrading $(0, s \pm 1)$, where $s$ is a smooth concordance invariant to be discussed in the next subsection.  In practice, the quick convergence of this spectral sequence gives a (1,4) pairing of the other homology classes of $Kh$, which will prove of interest for state cycles.

$\Phi_{Lee}$ is defined analogously to the Khovanov differential:  first Lee defines edge differentials, as shown in Table \ref{edge-differential-lee}, then she pieces them together with signs based on the edges in exactly the same way as done in the Khovanov differential.  So, $\Phi$ is defined on the same chain groups as Khovanov homology; it satisfies $\Phi \circ \Phi = 0$, anticommutes with the Khovanov differential, and is invariant under Reidemeister moves.  This means that $\Phi + d$ gives a differential on the Khovanov chain complex $CKh$, and the resulting homology $H(CKh, d+ \Phi)$ is called the Lee homology of a link, $Kh_{Lee}(L)$.  Note also that since $\Phi$ anticommutes with $d$, $\Phi$ takes cycles of $(CKh, d)$ to cycles of $(CKh, d)$ and induces a (1,4) homomorphism on $Kh$.

\begin{table}[ht!]
\begin{displaymath}
\xymatrix@R=2pt@!C{ \bigcirc \phantom{\otimes} \bigcirc \ar[r]^{\quad \mu_\Phi} & \bigcirc & \qquad & \bigcirc \ar[r]^{\Delta_\Phi} & \bigcirc \phantom{\otimes} \bigcirc\\
v_+ \otimes v_+ \ar@{|->}[r] & 0 & \qquad & v_+ \ar@{|->}[r] & 0 \phantom{\otimes v_+} \\
v_+ \otimes v_- \ar@{|->}[r] & 0 & \qquad & v_- \ar@{|->}[r] & v_+ \otimes v_+ \\
v_- \otimes v_+ \ar@{|->}[r] & 0 \\
v_- \otimes v_- \ar@{|->}[r] & v_+}
\end{displaymath}
\caption{The edge differential for $\Phi$ shifts every nonzero result by +4 in quantum grading.\label{edge-differential-lee}}
\end{table}

In terms of the Khovanov bigrading, $\Phi$ has bidegree $(1, 4)$, while the differential $ \Phi + d$ either preserves the quantum degree or raises it by 4 for each monomial in its result.  As Rasmussen ~\cite{rasmussen-slice} observed, this means that quantum grading gives a filtration on the Khovanov chain complex which the differential $\Phi + d$ respects.  This induces a spectral sequence converging to $Kh_{Lee}$ whose $E_1$ page is given by $(Kh(L), \Phi)$.

$\Phi$ gives more information about knots whose Khovanov homology we have already calculated, particularly when this Khovanov homology has homological width less than 4.  Because the bidegree of the differentials in the Lee spectral sequence, in terms of the Khovanov bigrading,  is (1, 4i), one can often show that the spectral sequence converges at the $E_2$ page; in fact, it is conjectured that the spectral sequence always converges at the $E_2$ page.  In such a case, $\Phi$ gives a (1,4) pairing between all nontrivial Khovanov homology classes which vanish in the Lee spectral sequence.  This is not a bilinear pairing, but rather an enumeration of the homology classes by pairs of the form $(\alpha, \Phi(\alpha))$, plus the pair of homology classes which survive under the spectral sequence.  It comes from the fact that convergence of the spectral sequence at the $E_2$ page means that:
\begin{equation*}
Kh_{Lee} = \frac{\ker(\Phi)}{\im(\Phi)}
\end{equation*}

We will be interested in Lee pairs $(\alpha, \Phi(\alpha))$ where $\alpha$ is a nontrivial state cycle.  Because of the (1,4) bigrading, $\alpha$ and $\Phi(\alpha)$ lie on adjacent diagonals, so one can interpret this Lee pair as a pair of diagonals related to the state cycle $\alpha$.  Theorem \ref{lifting-lee} will tell us later that we can then lift this pair of diagonals from one diagram to another via quasipositive modification.  For now, we give a useful situation where this state cycle Lee pairing arises:

\begin{prop}
\label{lee-sigma0}
Suppose $D$ is a + adequate diagram of a knot with homological width less than 4, and that its minimum homological dimension is less than 0 (i.e. $D$ is not a positive diagram).  Then $(\alpha_0, \Phi(\alpha_0))$ is a Lee pair, where $\alpha_0$ is the ET state in which the all-0 state has every loop marked by $v_-$.
\end{prop}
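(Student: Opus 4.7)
The plan is to pin down exactly where $\alpha_0$ sits in the spectral sequence, and then show that the existence of a nonzero class $\Phi(\alpha_0)$ on the next diagonal is forced by a counting/grading argument. First I would observe from Example \ref{sigma0} that $\alpha_0$ is a nontrivial Khovanov class; its homological grading is $h(\sigma_0) - n_- = -n_-$, which is also the \emph{minimum} homological grading appearing in $CKh(D)$ since every state has $h(\sigma) \ge 0$. The hypothesis that the minimum homological dimension of $Kh(D)$ is less than $0$ forces $n_- > 0$, so $\alpha_0$ lies in strictly negative homological degree.

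Next I would argue that the Lee spectral sequence for $D$ collapses at the $E_2$ page. The differential $d_r$ has bidegree $(1, 4r)$ in the Khovanov bigrading, which shifts the $\delta$-coordinate by $2 - 4r$; for $r \ge 2$ this shift is at least $6$. Since the homological width is less than $4$, the entire homology is supported on at most $3$ diagonals, spanning a range of at most $4$ in $\delta$, so every $d_r$ with $r \ge 2$ is forced to vanish. Therefore $E_2 = E_\infty = \ker(\Phi)/\operatorname{im}(\Phi) \cong \Q \oplus \Q$, with the two surviving classes concentrated in homological grading $0$ by Rasmussen's work.

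With these pieces in place, I would derive a contradiction from the assumption $\Phi(\alpha_0) = 0$ in $Kh(D)$. In that case $\alpha_0 \in \ker(\Phi)$. Since $\Phi$ raises homological grading by $1$ and $\alpha_0$ sits at the minimum homological grading $-n_-$, there is no class in $Kh^{-n_- - 1, *}(D)$ that could map to it; hence $\alpha_0 \notin \operatorname{im}(\Phi)$. Therefore $\alpha_0$ represents a nonzero class in $E_2$, and by collapse at $E_2$ it survives to $Kh_{Lee}(D)$. But every surviving class has homological grading $0 \ne -n_-$, contradicting $n_- > 0$.

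Consequently $\Phi(\alpha_0) \ne 0$ in $Kh(D)$. Since $\Phi \circ \Phi = 0$, the class $\Phi(\alpha_0)$ lies in $\ker(\Phi) \cap \operatorname{im}(\Phi)$, so both $\alpha_0$ and $\Phi(\alpha_0)$ die by the $E_2$ page and are paired by $\Phi$; this is exactly what it means for $(\alpha_0, \Phi(\alpha_0))$ to be a Lee pair. The main conceptual obstacle is really just the convergence statement, and once one has accepted the diagonal-counting argument that $d_r = 0$ for $r \ge 2$, the rest is grading bookkeeping that exploits $\alpha_0$'s extremal position at the bottom of the cube of resolutions.
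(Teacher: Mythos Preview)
Your argument is correct and follows essentially the same route as the paper's own proof: establish collapse of the Lee spectral sequence at $E_2$ from the width hypothesis, observe that $\alpha_0$ sits at the minimal homological grading and hence cannot lie in $\im(\Phi)$, and conclude that $\Phi(\alpha_0)\neq 0$ since otherwise $\alpha_0$ would survive to $E_\infty$ in nonzero homological degree. The only cosmetic differences are that the paper cites Rasmussen's Proposition~5.2 for the collapse statement (which is precisely the diagonal-counting argument you wrote out), and that the paper phrases the not-in-image step in terms of minimal \emph{bigrading} rather than just minimal homological grading; your version is slightly cleaner on that point.
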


\begin{proof}
Following Proposition 5.2 of ~\cite{rasmussen-slice}, the width condition on $Kh(D)$ forces the Lee spectral sequence to converge at the $E_2$ page.  The $E_1$ page of the spectral sequence is given by $(Kh(D), \Phi)$: convergence means that the homology of $Kh(D)$ under this differential $\Phi$ is $\Q \oplus \Q$ in homological dimension 0, and trivial elsewhere.

Consider the ET state $\alpha_0$, which marks every loop of the all-zero state $\sigma_0$ with $v_-$.  As observed in Example \ref{sigma0}, this represents a nontrivial homology class, in minimal quantum and homological grading.  We claim that $[ \Phi(\alpha_0) ]$ cannot be 0.

$\alpha_0$ definitely dies in the $E_2$ page of the spectral sequence, because its homological dimension is not 0 by hypothesis ($\alpha_0$ has minimal homological grading, which we assumed was less than 0).  But, because $\alpha_0$ is in the minimal bigrading and $\Phi$ is a (1,4) homomorphism, there cannot be any $\beta$ so that $\Phi(\beta) = \alpha_0$.  So, since $\alpha_0$ does not lie in the image of $\Phi$, if it lay in the kernel of $\Phi$ it would survive to the $E_2$ page, a contradiction.  It follows that $[ \Phi(\alpha_0) ]$ is nontrivial, so that $(\alpha_0, \Phi(\alpha_0))$ is a Lee pair.
\end{proof}

\begin{remark}
Note that $\Phi(\alpha_0)$ will not be a state cycle, but its form is sufficiently simple that we can see how it interacts under a certain class of Jacobsson homomorphisms later (see Theorem \ref{lifting-lee}).
\end{remark}

\subsection{The s-invariant}
In this subsection, we will examine how to exploit the relationship of certain state cycles to the $s$-invariant to conclude nontriviality of those special state cycles.  Rasmussen's $s$-invariant ~\cite{rasmussen-slice} is defined roughly to be average of the quantum gradings of the generators which survive to the $E_\infty$ page of the Lee spectral sequence.  It turns out that $s$ is a smooth concordance invariant, and it is intrisically linked to the Seifert state (the state smoothed according to the algorithm for the canonical Seifert surface of a diagram).

When a state cycle for the Seifert state has quantum grading equal to $s-1$, one can try to generalize an argument of Baldwin and Plamenevskaya ~\cite{bo-openbook} and prove that the state cycle is nontrivial.  Recall that the Seifert state is always adequate, per the argument of Example \ref{ex-seifert-state}.

\begin{prop}
Suppose $\alpha$ is a state cycle associated to the Seifert state, and its quantum grading is $s-1$.  If $\alpha$ has no 1-block, then $\alpha$ represents a nontrivial homology class.
\end{prop}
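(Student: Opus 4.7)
The plan is to adapt the Baldwin--Plamenevskaya strategy for the Plamenevskaya class, using Lee's canonical cycles together with Rasmussen's $s$-invariant to trap $[\alpha]$ at a filtration level that forces nontriviality. First I would pin down the form of $\alpha$: since $\alpha$ has no $1$-block, every loop of $\sigma_s$ is $0$-tracing, and Proposition \ref{zeromergingcycle} then forces every loop to be marked by $v_-$, so $\alpha = v_-^{\otimes k}$ where $k = |\sigma_s|$ is the number of Seifert loops. Its quantum grading is $w-k$, which equals $s-1$ by hypothesis.

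Next I would introduce Lee's canonical $(d+\Phi)$-cycles $\mathfrak{s}_o$ and $\mathfrak{s}_{\bar o}$: both are supported entirely on $\sigma_s$ by marking each loop with $v_+ + v_-$ or $v_+ - v_-$ via an orientation-determined $2$-coloring, and together they give a basis of $Kh_{Lee}(K)\cong\Q^2$. Expanding in the standard basis yields a signed sum of enhanced states on $\sigma_s$; the quantum grading of a term $v_{\epsilon_1}\otimes\cdots\otimes v_{\epsilon_k}$ equals $w+\sum\epsilon_i$, so $\alpha$ is the unique term at grading $s-1$, and its coefficient in each of $\mathfrak{s}_o,\mathfrak{s}_{\bar o}$ is $\pm 1$. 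Setting $\zeta_\pm := \mathfrak{s}_o\pm\mathfrak{s}_{\bar o}$, exactly one of them (call it $\zeta$) has $\alpha$-coefficient $c=\pm 2$ while the other (call it $\eta$) has $\alpha$-coefficient $0$, so $\eta$ is supported entirely in quantum grading $\geq s+1$. Both $[\zeta]$ and $[\eta]$ are nonzero in $Kh_{Lee}$, and Rasmussen's theorem says they realize the two Lee filtration levels $s-1$ and $s+1$. Since $\eta$ already has filtration $\geq s+1$, $[\eta]$ must be the filtration-$(s+1)$ class, forcing $[\zeta]$ to be the filtration-$(s-1)$ class.

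Finally I would run the cancellation argument. Suppose for contradiction that $\alpha=d\beta$ for some $\beta\in CKh^{-1,s-1}$, and set
\[
\zeta' := \zeta - c(d+\Phi)\beta = (\zeta - c\alpha) - c\,\Phi\beta.
\]
Since $(d+\Phi)\beta$ is a Lee boundary, $[\zeta']=[\zeta]$. But $\alpha$ was the only term of $\zeta$ at quantum grading $s-1$, so $\zeta - c\alpha$ has all terms at grading $\geq s+1$, and $\Phi\beta$ lives at grading $s+3$; hence $\zeta'$ has quantum filtration $\geq s+1$. This contradicts $[\zeta]$ having filtration exactly $s-1$, and we conclude $[\alpha]\neq 0$. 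The main technical obstacle is the bookkeeping in the second paragraph: one must verify that $[\zeta]$ rather than $[\eta]$ carries the filtration-$(s-1)$ class, which is where the coefficient-of-$\alpha$ analysis inside $\mathfrak{s}_o$ and $\mathfrak{s}_{\bar o}$ meets Rasmussen's identification of the two Lee filtrations.
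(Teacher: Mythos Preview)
Your proof is correct and follows essentially the same Baldwin--Plamenevskaya strategy as the paper. The only difference is bookkeeping: the paper works directly with a single Lee generator $\mathfrak{s}_0$ and invokes Rasmussen's Corollary~3.6 to get $s(\mathfrak{s}_0)=s-1$ immediately, whereas you take the slightly longer route of forming $\zeta_\pm=\mathfrak{s}_o\pm\mathfrak{s}_{\bar o}$ and pinning down which one sits at filtration $s-1$ by eliminating the other. Either way the contradiction step---subtracting a Lee boundary to raise the filtration of a class whose filtration is known to be exactly $s-1$---is identical.
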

\begin{proof}
Let $\sz$ be the Seifert state, where each loop is marked by $v_- \pm v_+$ in such a fashion that loops which share a crossing alternate in the sign for $v_+$.  Such a sign choice is possible for every loop because the Seifert state is 2-colorable.  So, $\sz$ is given by the tensor $(v_- + v_+) \otimes (v_- - v_+) \otimes \cdots \otimes (v_- \pm v_+)$.  By Corollary 3.6 of Rasmussen ~\cite{rasmussen-slice}, the smallest quantum grading term of $\sz$ that is nontrivial in the Lee homology has grading $s-1$.  Following Rasmussen's notation, this means that $s(\sz) = s-1$.

When $\alpha$ has no 1-block, every loop is marked by $v_-$.  So, if you multiply out the tensor for $\sz$, one sees that $\sz = \alpha + \tau$, where every term in $\tau$ has higher quantum grading (since it has one or more $v_+$ factors).

Suppose that $[ \alpha ]$ is trivial.  Then there is some $\gamma$ so that $d \gamma = \alpha$.  Note that since the Khovanov differential $d$ preserves quantum grading, $\gamma$ also has quantum grading $s-1$.

Let $d' = d + \Phi$ be the Lee differential.  By definition, $\Phi$ shifts quantum grading up by 4.  So, $d' \gamma = d \gamma + \Phi \gamma = \alpha + \upsilon$, where the quantum grading of every term of $\upsilon$ has strictly higher quantum grading than $s-1$.

Consider $\sz - d' \gamma$.  This is homologous to $\sz$, so should have the same minimum nontrivial quantum grading as $\sz$.  But, $\sz - d' \gamma = \tau + \upsilon$, so that $s(\sz - d' \gamma) > s-1$, a contradiction.  Therefore, no such $\gamma$ exists and $\alpha$ represents a nontrivial homology class. 
\end{proof}
\begin{remark}
The argument for this case is essentially the same as that of Theorem 1.2 of Baldwin and Plamenevskaya ~\cite{bo-openbook}.  We reorganize it slightly here since it will serve as a model for other state cycles.
\end{remark}

\begin{thm}
Suppose $\alpha$ is a state cycle associated to the Seifert state, and its quantum grading is $s-1$.  If $\alpha$ has a single loop in its 1-block, and that loop is marked by $v_+$, then $\alpha$ represents a nontrivial homology class.
\label{thm-s-singleloop}
\end{thm}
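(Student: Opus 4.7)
The plan is to extend the argument of the preceding proposition by exploiting that $L_0$, the unique 1-block loop, is touched only by 1-traces at the Seifert state, so no outgoing edge at the Seifert state acts on it. Accordingly I can factor $L_0$ out of Lee's 2-coloring cycle: writing $\sz = \sigma^- + \epsilon_0 \sigma^+$ with $\sigma^\pm = v_\pm|_{L_0} \otimes T$ and $T = \bigotimes_{j \neq 0}(v_- + \epsilon_j v_+)$, both $\sigma^\pm$ turn out to be genuine Lee cycles, since Lee's lemma applies to each 0-mergetrace (which joins two non-$L_0$ loops of opposite 2-coloring sign), annihilating the corresponding tensor factor. I will also use the companion Lee cycle $\bar{\sz} = \tilde\sigma^- - \epsilon_0 \tilde\sigma^+$, where $\tilde\sigma^\pm = v_\pm|_{L_0} \otimes \tilde T$ with $\tilde T = \bigotimes_{j \neq 0}(v_- - \epsilon_j v_+)$, by the same reasoning.

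The key quantum-grading pieces are $\sigma^+_{s-1} = \tilde\sigma^+_{s-1} = \alpha$, $\sigma^-_{s-3} = \tilde\sigma^-_{s-3} = \alpha_{\min}$ (the all-$v_-$ marking), and $\sigma^-_{s-1} = -\tilde\sigma^-_{s-1} = \sum_{j \neq 0} \epsilon_j \alpha^{(j)}$, where $\alpha^{(j)}$ is the enhanced Seifert state with $v_+$ on the 0-tracing loop $L_j$ and $v_-$ elsewhere. Suppose for contradiction that $[\alpha] = 0$ in $Kh$, so $\alpha = d\gamma$ with $q(\gamma) = s-1$. Then $\sigma^+ - d'\gamma$ and $\tilde\sigma^+ - d'\gamma$ each have vanishing $(s-1)$-parts and the rest at grading $\geq s+1$, so $s([\sigma^+]), s([\tilde\sigma^+]) \geq s+1$. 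Combining via $\sigma^- - \tilde\sigma^- = (\sz - \bar{\sz}) - \epsilon_0(\sigma^+ + \tilde\sigma^+)$ and using Rasmussen's fact that $s([\sz - \bar{\sz}]) = s+1$ for a knot, I conclude $s([\sigma^- - \tilde\sigma^-]) \geq s+1$. But $(\sigma^- - \tilde\sigma^-)_{s-3} = 0$ and $(\sigma^- - \tilde\sigma^-)_{s-1} = 2\sum_{j \neq 0} \epsilon_j \alpha^{(j)}$, so a Lee representative at minimum grading $\geq s+1$ forces $\sum_{j \neq 0} \epsilon_j \alpha^{(j)}$ to be a $d$-boundary in $Kh$.

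On the other hand, $[\sigma^-] = [\sz] - \epsilon_0[\sigma^+]$ together with $s([\sz]) = s-1$ and $s([\sigma^+]) \geq s+1$ forces $s([\sigma^-]) = s-1$ exactly. By Proposition \ref{onetracepair-trivial} applied to any 1-trace from $L_0$ to a 0-tracing loop (such a trace must exist, since $L_0$ is joined to the rest of the diagram only by 1-traces), there is $\beta$ with $d\beta = \pm\alpha_{\min}$ and $q(\beta) = s-3$. Then $\sigma^- \mp d'\beta$ is a Lee representative of $[\sigma^-]$ at minimum grading $s-1$ whose $(s-1)$-part is precisely $\sum_{j \neq 0} \epsilon_j \alpha^{(j)}$. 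The filtration statement $s([\sigma^-]) = s-1$ says this $(s-1)$-part is a nonzero class in $E_\infty^{0, s-1}$, and therefore cannot be a $d$-boundary in $Kh^{0, s-1}$, contradicting the previous paragraph; hence $[\alpha] \neq 0$. The main technical point will be confirming that $\sigma^\pm$ and $\tilde\sigma^\pm$ are Lee cycles, which the 1-block hypothesis on $L_0$ sets up precisely so that Lee's lemma applies to all the 0-mergetrace edges at the Seifert state.
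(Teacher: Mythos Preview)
There is a genuine gap. Your claim that $s([\sz - \bar\sz]) = s+1$ is not correct with your own definitions. Since $\sz = \bigotimes_j(v_- + \epsilon_j v_+)$ and $\bar\sz = \bigotimes_j(v_- - \epsilon_j v_+)$, one has $(\sz + \bar\sz)_{s-1} = 0$ and $(\sz + \bar\sz)_{s-3} = 2\alpha_{\min}$, which is a Khovanov boundary; hence $s([\sz + \bar\sz]) = s+1$. Because $[\sz + \bar\sz]$ and $[\sz - \bar\sz]$ are linearly independent and the subspace of $Kh_{Lee}$ of filtration $\geq s+1$ is one-dimensional, it follows that $s([\sz - \bar\sz]) = s-1$, not $s+1$. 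Plugging this corrected value into $\sigma^- - \tilde\sigma^- = (\sz - \bar\sz) - \epsilon_0(\sigma^+ + \tilde\sigma^+)$ now yields only $s([\sigma^- - \tilde\sigma^-]) = s-1$, which is the same conclusion as your final paragraph, not its opposite. So under the assumption $[\alpha]=0$ you have established that $\sum_{j\neq 0}\epsilon_j\alpha^{(j)}$ is \emph{not} a Khovanov boundary, but you no longer have any argument that it \emph{is} one, and the contradiction collapses.

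The paper closes exactly this gap by an explicit construction rather than a filtration argument: writing $\beta_1 = \sum_{j\neq 0}\epsilon_j\alpha^{(j)}$ (your $\sigma^-_{s-1}$), it verifies directly that $\beta_1$ is a Khovanov cycle and then builds a chain $\mu$, supported on the state obtained from the Seifert state by $0$-smoothing a single $1$-trace incident to $L_0$, with $d\mu = \beta_1 \pm \alpha$. This relation is what makes the argument go through: if $\alpha = d\gamma$ then $\beta_1 = d(\mu \mp \gamma)$ is a boundary, contradicting $s([\sigma^-]) = s-1$ exactly as in your last paragraph. Your decomposition $\sz = \sigma^- + \epsilon_0\sigma^+$ and the observation that $\sigma^\pm,\tilde\sigma^\pm$ are Lee cycles are correct and package the filtration side nicely, but the $\bar\sz$ machinery alone does not produce the needed link between $\beta_1$ and $\alpha$; you still need the primitive $\mu$, and the single-loop $1$-block hypothesis is precisely what makes that construction work.
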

\begin{proof}
This time, we will show that $\sz = \beta_0 + \beta_1 + \alpha + \tau$, where $\tau$ has quantum grading higher than $\alpha$, and both $\beta_0$ and $\beta_1$ are trivial cycles in Khovanov homology, of quantum grading $\leq s-1$.  Then, if $\alpha$ is trivial, we will again look at something homologous to $\sz$ in Lee homology, and show that it has quantum grading strictly greater than $s-1$, getting a contradiction.

First, consider $\alpha$.  Since every loop outside of the 1-block must be marked by $v_-$, up to ordering of the loops $\alpha = v_+ \otimes v_- \otimes \cdots \otimes v_-$.  Since the quantum grading of $\alpha$ is $s-1$, this means that any marking of the Seifert state with one loop marked by $v_+$, and the rest of the loops marked by $v_-$, will also have quantum grading $s-1$; a state with loops marked by only $v_-$ will have quantum grading $s-3$.

Let $\beta_0$ be the Seifert state marked by $v_-$ on every loop.  By Proposition \ref{onetracepair-trivial}, we know that $\beta_0$ is trivial because the (only) loop in the 1-block is marked by $v_-$:  it must be connected to some 0-tracing loop by only 1-traces, and that 0-tracing loop is also marked $v_-$.  So, there is some $\gamma_0$ so that $d \gamma_0 = \beta_0$, and as before, $d' \gamma_0 = \beta_0 + \upsilon_0$ for $\upsilon_0$ of quantum grading 4 higher than the grading for $\beta_0$, namely $s+1$.  In particular, $\upsilon_0$ has quantum grading higher than $s-1$.

Next, number the loops of the Seifert state other than the one in the 1-block by 1 through $n$, where the first loop is connected by a 1-smoothing to the loop marked by $v_+$ in $\alpha$.  Let $\delta_i$ be the Seifert state where loop $i$ is marked by $v_+$ and all other loops are marked by $v_-$.  Let $\epsilon_i$ denote the number of traces in a path from the loop marked by $v_+$ in $\alpha$ to loop $i$ in the Seifert state, modulo 2.  Note that this is well-defined, because the Seifert state is even, as discussed in Example \ref{ex-seifert-state}; one can also view this sign choice simply as a choice of a 2-coloring for the associated state graph.  Define $\beta_1$ by:
\begin{equation*}
\beta_1 = \sum \limits_{i=1}^{n} (-1)^{\epsilon_i} \delta_i
\end{equation*}
If we choose $\sz$ so that a $v_- + v_+$ term lies on the circle marked $v_+$ on $\alpha$, then the sign of each term of $\beta_1$ will match up with the sign of the corresponding monomial in $\sz$, since loops in $\sz$ are marked by an alternation of $v_- + v_+$ and $v_- - v_+$ by Corollary 2.5 of Rasmussen ~\cite{rasmussen-slice}.  Either such choice satisfies $s(\sz) = s-1$ by Corollary 3.6 of Rasmussen, so we are free to pick this convention for $\sz$.  By construction, we now have that $\sz = \beta_0 + \beta_1 + \alpha + \tau$, where $\tau$ are terms of quantum degree higher than $s-1$.

We claim that $\beta_1$ is a cycle.  To see this, consider first $d \delta_i$.  The only nonzero edge differentials will come from 0-traces joining the loop marked $v_+$ with an adjacent $v_-$ loop:  the edge differential of such a 0-trace will result in a state where those two loops are merged, and every loop is marked by $v_-$.  Suppose the adjacent $v_-$ loop was numbered $j$; then $\delta_j$ will also have an edge differential from that 0-trace with the same output on that edge differential.  In short, every such nonzero edge differential appears twice in $\beta_1$:  the only question is whether they appear with opposite sign in the total differential of $\beta_1$.

\begin{figure}[ht!]
\centering
\includegraphics{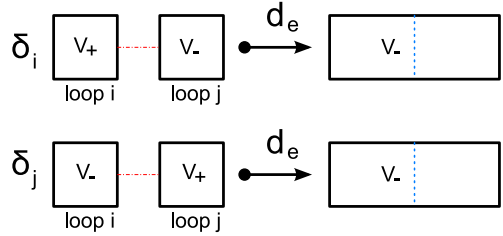}
\caption{$\delta_i$ and $\delta_j$ sharing a common 0-smoothing.}
\label{figure-beta_1}
\end{figure}

By construction, we already have that $\delta_i$ and $\delta_j$ have opposite sign in the sum for $\beta_1$, since loops $i$ and $j$ are adjacent.  But note that $\delta_i$ and $\delta_j$ have the same underlying state, so the signs of the edge differentials for the same edge (coming from the 0-smoothing between loop $i$ and loop $j$) will be the same.  So, since the two enhanced states occur with opposite sign in $\beta_1$, each such edge pair will cancel out in the total differential, as desired.  So, $\beta_1$ is a cycle.  

Now, we want to construct $\gamma_1$ so that $d \gamma_1 = \beta_1$.  Let $c$ be the crossing associated to a 1-smoothing joining loop 1 to the 1-block.
Let $\sigma_c$ be the state which matches the Seifert state, except that $c$ is now 0-smoothed.  The result is that the loop of the 1-block is merged together with loop 1 in $\sigma_c$; by the definition of the 1-block, there were no 0-smoothings between loop 1 and the 1-block loop in the Seifert state, so there is only a single 0-pinchtrace in $\sigma_c$. 

Let $\mu_1$ be $\sigma_c$ marked with a $v_+$ on the merged loop, and $v_-$ on every other loop.  Let $\mu_i$ for $i > 1$ be $\sigma_c$ with the merged loop marked $v_-$, and the other loops marked as in $\delta_i$.  In other words, $\mu_i$ has a $v_+$ on loop $i$ in $\sigma_c$ and every other loop is marked by $v_-$.

Define $\mu$ by:
\begin{equation*}
\mu = \sum \limits_{i=1}^{n} (-1)^{\epsilon_i} \mu_i
\end{equation*}
We claim that $d \mu = \beta_1 \pm \alpha$, where the sign is determined by $(-1)^{\epsilon_1}$.

First, we will assume that the crossings are ordered so that the crossings 1-smoothed in the Seifert state come last, and that the first 1-smoothing which appears in the ordering is that associated to crossing $c$.  We claim that this ordering of crossings guarantees that the signs associated to the edge differentials out of $\sigma_c$ will all be positive.

Recall that the sign for an edge differential $e$ is $(-1)^{|e|}$, where $|e|$ counts the number of crossings 1-smoothed which come before the * crossing of edge $e$ in the crossing ordering.  The set of possible * positions for an edge differential out of $\sigma_c$ corresponds to the set of 0-smoothings of the Seifert state, together with the extra crossing $c$ which is 0-smoothed in $\sigma_c$ but not the Seifert state.  Thus $|e|=0$ for every choice of *, and all edge differential signs are positive as claimed.

Consider the edge differential $d_c$ for $c$, applied to $\mu$.  Since $c$ is a 0-pinchtrace in $\sigma_c$, this edge differential takes the $\Delta$ form.  Applied to $\mu_1$, this differential returns $\alpha + \delta_1$ (this is the $v_+ \longrightarrow v_+ \otimes v_- + v_- \otimes v_+$ case); applied to $\mu_i$ for $i > 1$, this differential returns $\delta_i$ (this is the $v_- \longrightarrow v_- \otimes v_-$ case).  So, $d_c \mu = \beta_1 \pm \alpha$.
 
Now we need to show that nonzero edge differentials from crossings other than $c$ come as cancelling pairs in $\mu$, as was the case for $\beta_1$.  Other than $c$, all 0-traces are mergetraces; 0-smoothings between two $v_-$ loops will thus have zero edge differentials.  The nonzero edge differentials will come from a $v_+$ loop joining with $v_-$ loop along a 0-smoothing; each such edge differential will appear exactly twice, once from marking $v_-$ on one loop and $v_+$ on the other, and the second time from choosing the opposite markings.  Since the two loops in question are connected by a 0-trace in the Seifert state, the edge differentials appear with opposite sign in $\mu$, similar to the case we had before with edge differentials from $\beta_1$.  So, $d \mu = \beta_1 \pm \alpha$ as claimed.

Assume that $[ \alpha ]$ is trivial; then there exists $\gamma$ so that $d \gamma = \alpha$.  Similarly, $\gamma_1 = \mu \mp \gamma$ now satisfies $d \gamma_1 = \beta_1$.  Both $\alpha$ and $\beta_1$ have quantum grading $s-1$, so the same holds for $\gamma$ and $\gamma_1$.  It follows that $d' \gamma = \alpha + \upsilon$ and $d' \gamma_1 = \beta_1 + \upsilon_1$ for $\upsilon, \upsilon_1$ of quantum grading $s+3$.

Consider $\sz - d'(\gamma_0 + \gamma_1 + \gamma)$.  This is homologous to $\sz$, so $s(\sz - d'(\gamma_0 + \gamma_1 + \gamma)) = s-1$.  But, $\sz - d'(\gamma_0 + \gamma_1 + \gamma) = \tau - \upsilon_0 - \upsilon_1 - \upsilon$, so it has no monomials of degree $s-1$; therefore, $s(\sz) > s-1$, a contradiction.  It follows that $\alpha$ could not be a trivial cycle.
\end{proof}

\begin{remark}
Regarding the hypothesis on the loop in the 1-block, if that loop were marked by $v_-$ and there were more than one loop in the state, then the associated state cycle would necessarily be trivial by Proposition \ref{onetracepair-trivial}.  Spelling this out, since this is a knot, the loop of the 1-block must be connected to some 0-tracing loop.  By definition, these two loops are only connected by 1-traces, because if any 0-trace touched our loop in the 1-block, it would not be a loop of the 1-block.  The 0-tracing loop must also be marked by $v_-$, so with the 1-block loop marked by $v_-$ the hypotheses of Proposition \ref{onetracepair-trivial} apply and the state cycle must be trivial.
\end{remark}

\begin{figure}[ht!]
\centering
\includegraphics{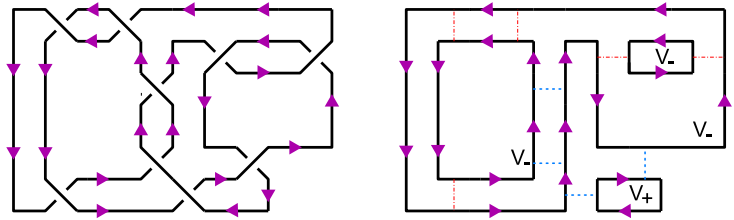}
\caption{The Seifert state cycle for $9_{42}$ has a single loop in its 1-block, and quantum grading $-1$.}
\label{figure-9_42-seifert-oet}
\end{figure}

\begin{example}
\label{example-9_42-seifert}
The $s$ invariant for $9_{42}$ is 0.  For the usual Rolfsen diagram of $9_{42}$, the state cycle associated to the Seifert state has quantum grading $-1 = s-1$ and a single loop in its 1-block, as seen in Figure \ref{figure-9_42-seifert-oet}.  So, by Theorem \ref{thm-s-singleloop}, this state cycle is nontrivial.
\end{example}

So, how about other state cycles based on the Seifert state with quantum grading $(0, s-1)$?  There is a big jump in difficulty when one adds even a second loop marked by $v_+$, but we conjecture that this relationship holds in general:

\begin{conj}
Suppose $\alpha$ is a state cycle associated to the Seifert state, and its quantum grading is $s-1$.  Then $\alpha$ represents a nontrivial homology class.
\end{conj}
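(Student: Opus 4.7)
The plan is to extend the strategy of Theorem~\ref{thm-s-singleloop} to state cycles with multiple loops in the 1-block marked by $v_+$. Assume toward contradiction that $\alpha = d\gamma$ for some chain $\gamma$ of quantum grading $s-1$. The goal is to construct a chain $\gamma^{\ast}$ of quantum grading at most $s-1$ whose Lee differential $d'\gamma^{\ast} = d\gamma^{\ast}+\Phi\gamma^{\ast}$ absorbs every monomial of $\sz$ of quantum grading at most $s-1$, so that $\sz - d'\gamma^{\ast}$ is homologous to $\sz$ in Lee homology yet has every monomial in quantum grading at least $s+1$, contradicting Rasmussen's identity $s(\sz)=s-1$.

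The first step would be to decompose $\sz$ according to the number of $v_+$ factors: write $\sz = \sum_{j \geq 0} \sz^{(j)}$, where $\sz^{(j)}$ is the signed sum (with signs coming from the 2-coloring of the Seifert state graph $\Gamma_{\sigma_s}$) over all markings of $\sigma_s$ with exactly $j$ loops marked by $v_+$. Each $\sz^{(j)}$ lives in a single quantum grading, and the pairwise-cancellation argument used for $\beta_1$ in Theorem~\ref{thm-s-singleloop} — that every edge differential between a $v_+$ and a $v_-$ loop appears twice with opposite signs dictated by the 2-coloring — should show directly that each $\sz^{(j)}$ is itself a Khovanov cycle. If $k$ denotes the number of $v_+$ factors of $\alpha$, then $\sz^{(j)}$ has quantum grading $s-1-2(k-j)$, so the grading-$(\leq s-1)$ portion of $\sz$ is exactly $\sz^{(0)} + \sz^{(1)} + \cdots + \sz^{(k)}$, with $\alpha$ appearing as one of the monomials making up $\sz^{(k)}$.

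The central step would then be to build, inductively in $j$, explicit chains $\mu_j$ whose Khovanov boundary $d\mu_j$ equals $\sz^{(j-1)}$ modulo a correction lying entirely in $\sz^{(j)}$, and at $j=k$ modulo $\alpha$ itself. Each $\mu_j$ would be built by 0-smoothing one of the 1-traces meeting the 1-block — mirroring the role played by the crossing $c$ in Theorem~\ref{thm-s-singleloop} — marking the newly merged loop by $v_+$, and summing over choices of such a 1-trace and over positions of the remaining $v_+$'s with signs dictated by the 2-coloring. Once assembled, these $\mu_j$'s together with the hypothesized $\gamma$ satisfying $d\gamma = \alpha$ would telescope into the chain $\gamma^{\ast}$ needed to deliver the contradiction.

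The main obstacle I foresee is controlling the new edge differentials that appear when one 0-smooths a 1-trace in the presence of multiple simultaneous $v_+$ markings. In the $k=1$ case, the edge differentials out of $\mu$ separate cleanly into a single pinch differential that delivers $\beta_1 \pm \alpha$ together with merge differentials between a $v_+$ and a $v_-$ loop that come in sign-cancelling pairs; the fact that at most one $v_+$ meets any given 0-trace at a time is crucial. For $k>1$, the merge rule $v_+ \otimes v_+ \mapsto v_+$ activates whenever two $v_+$ loops happen to share a 0-trace, producing same-grading terms whose cancellation is not automatic, and these unwanted terms must be arranged to lie cleanly inside the previously constructed $\mu_{j'}$ with $j'<j$ for the induction to close. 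Pushing this through will likely require a more conceptual input — perhaps identifying the $\sz^{(j)}$ as generators of a Koszul-style auxiliary complex built from $\Gamma_{\sigma_s}$ and its 2-coloring, or else a direct geometric argument working with the Seifert-surface cobordism — and this combinatorial bookkeeping is where the conjecture is likely genuinely hard rather than a routine extension of the $k=1$ argument.
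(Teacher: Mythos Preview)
The statement you are attempting to prove is stated in the paper as a \emph{conjecture}, not a theorem; the paper offers no proof. Immediately before stating it, the author writes: ``There is a big jump in difficulty when one adds even a second loop marked by $v_+$, but we conjecture that this relationship holds in general.'' So there is no paper proof against which to compare your proposal.

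That said, your proposal is not a proof either, and you essentially acknowledge this yourself. Your strategy --- decompose $\sz$ by the number of $v_+$ factors, show each $\sz^{(j)}$ is a cycle, and then inductively build chains $\mu_j$ whose boundaries telescope through the $\sz^{(j)}$ down to $\alpha$ --- is the natural generalization of the paper's $k=1$ argument in Theorem~\ref{thm-s-singleloop}. The obstacle you identify is precisely the one the author alludes to: once two or more loops carry $v_+$ simultaneously, the merge rule $\mu(v_+ \otimes v_+) = v_+$ produces nonzero edge-differential terms that do not come in sign-cancelling pairs from the 2-coloring, and these terms have the same quantum grading as the input rather than being pushed to a different level. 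Your hope that such terms will land cleanly inside previously constructed $\mu_{j'}$ is plausible but unsubstantiated; making that work is the content of the conjecture, not a bookkeeping detail. In short, your outline correctly locates the difficulty, but does not resolve it, and neither does the paper.
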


\section{Quasipositive Modification}
\label{q-p-mod}

In the last section, we saw a simple procedure, positive modification, that allowed one to lift a nontrivial state cycle of one diagram to a nontrivial state cycle of another diagram.  A benefit of lifting from a state cycle to a state cycle is that the process can be iterated, allowing one to construct families of knots or links which each have nontrivial lifts of these cycles.  This section will focus on a more interesting generalization of this procedure, quasipositive modification, which still offers the ability to lift state cycles to state cycles.  

Roughly speaking, positive modification is the insertion of a positive crossing into a special place limited by the state cycle of interest.  Similarly, quasipositive modification is the process of ``gluing'' in a quasipositive braid in a way that is compatible with a state cycle.  A braid is \emph{quasipositive} if, in terms of the braid group generators $\tau_i$, it can be written in the form $\prod_k \omega_k \tau_{i_k} \omega_k^{-1}$, for some sequence of braid words $\omega_k$ and positive crossings $\tau_{i_k}$.  We will call the $\tau_{i_k}$ from such a presentation the \emph{central positive crossings} of the braid word, to distinguish them from the positive crossings that might occur in the $\omega_i$.

\begin{figure}[ht!]
\centering
\includegraphics{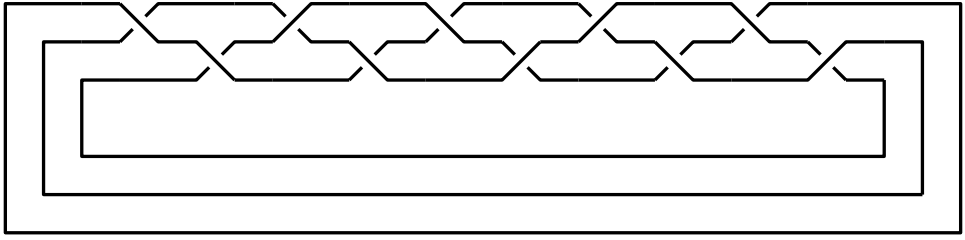}
\caption{Closure of the quasipositive braid for the mirror image of $8_{20}$.}
\label{figure-8_20}
\end{figure}

\begin{example}
\label{example-q-p-8_20}
The mirror image of the knot $8_{20}$ has a diagram of the form $ (\tau_1 \tau_2 \tau_1^{-1} \underline{\tau_2} \tau_1 \tau_2^{-1} \tau_1^{-1})( \tau_2 \underline{\tau_1} \tau_2^{-1}) $.  Here, the breakdown into each factor $\omega_i \tau_{u_k} \omega_i^{-1}$ is marked by parenthesis, with the central positive crossings underlined.  See Figure \ref{figure-8_20}.
\end{example}

For further examples, Baader has classified which prime knots up to 10 crossings can be realized as quasipositive braids, and Appendix A of ~\cite{baader-thesis} has a full list of knots up to 10 crossings which are positive and quasipositive, with quasipositive braid words for those which are honestly quasipositive.  Be aware that this table does not distinguish mirror images - usually a Jones polynomial calculation will establish that the braid in question is for the mirror image of the knot, as is the case here for $8_{20}$.

When we glue in a quasipositive braid, orientation will be an issue, as we want the resulting diagram to keep the orientations of the crossings of the quasipositive braid.  We will also want to place restrictions based on the state cycle we wish to lift.  This leads to the next iteration of state-based definitions:

\begin{figure}[ht!]
\centering
\includegraphics{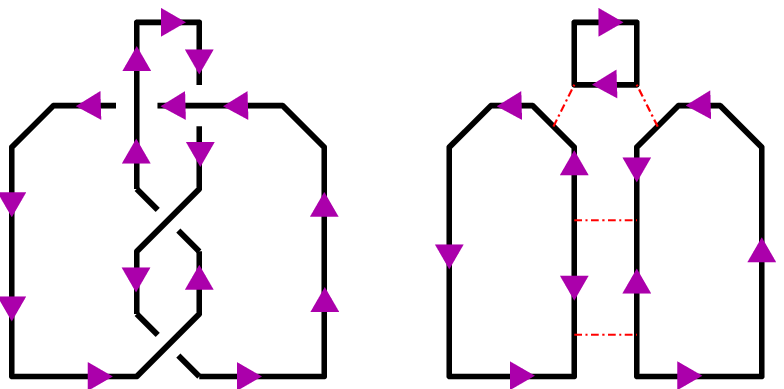}
\caption{Oriented diagram for the figure 8, and the OT state for its all zero state.}
\label{figure-ot-figure8}
\end{figure}

\begin{defn}
Let $D$ be an oriented link diagram.  An \emph{Oriented Traced State}, or \emph{OT State}, is a traced state for $D$, for which every arc between traces has been marked by the orientation of that arc in the original diagram.  Similarly, an \emph{Oriented Enhanced Traced State}, or \emph{OET State}, is an ET state marked by this orientation information.  If $\alpha$ is an ET state, denote the associated OET state by $<\alpha>$.
\end{defn}

As shown in Figure \ref{figure-ot-figure8}, the orientations of the subarcs of a loop in an OT state will not in general give a consistent orientation for this loop.  But, these oriented arcs will help specify the region we will be gluing in quasipositive braids, for quasipositive modification.

\begin{defn}
Let $D$ be an oriented link diagram, and $<\sigma>$ an OET state for that diagram representing a nontrivial homology class.  A collection of oriented arcs from $<\sigma>$,  $A_\sigma$, is said to be \emph{braid-parallel with respect to $<\sigma>$} if:
\begin{enumerate}
\item Each of the arcs comes from separate loops in $\sigma$.
\item Each of the arcs is marked by $v_-$ in $\sigma$. 
\item The arcs can be joined by a line transverse to each which touches only the arcs of $A_\sigma$ and meets each arc in the same orientation.
\end{enumerate}
\end{defn}

Roughly speaking, the braid-parallel arcs are what we will replace by a quasipositive braid, for quasipositive modification.  The matching orientation of each arc agrees with how each braid strand is oriented in a braid, so that when replacing the braid-parallel arcs by the braid, positive and negative crossings in the braid will remain positive and negative in the new diagram.  Alternatively, one can replace the transverse line by a ``box'' around the arcs, so that inside the box, the arcs look like the trivial braid on $n$ strands.  See Figure \ref{figure-ot-trefoil-boxed}.

\begin{figure}[ht!]
\centering
\includegraphics{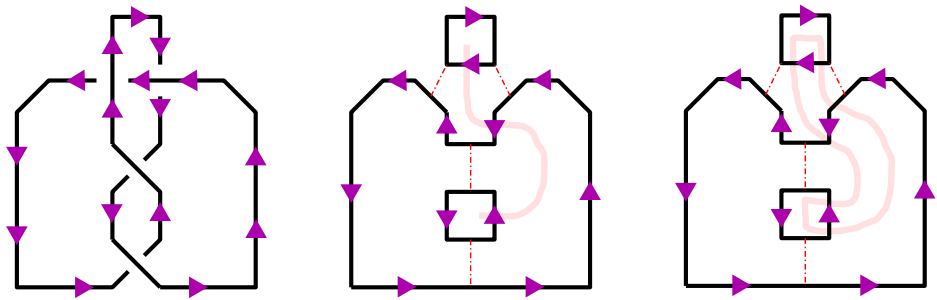}
\caption{Oriented diagram for a figure-8 look-alike version of the positive trefoil, OT states for the all zero state, and a collection of 3 braid-parallel arcs marked by a transverse line, then a box.}
\label{figure-ot-trefoil-boxed}
\end{figure}

\begin{defn}
\label{def-q-p-mod}
Given oriented diagrams $D$ and $D'$, and an OET state $< \sigma >$ of $D$, we say that $D'$ is a quasipositive modification on $D$ compatible with $<\sigma>$ if there is a collection of arcs of $D$ braid-parallel with respect to $<\sigma>$ and a quasipositive braid $B$ on as many strands so that replacing the arcs in $D$ with the quasipositive braid results in $D'$.
\end{defn}

\begin{figure}[ht!]
\centering
\includegraphics{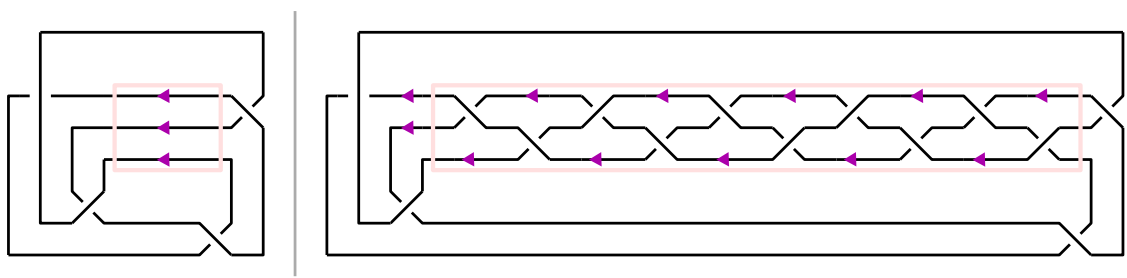}
\caption{On the left, the figure-8 look-alike diagram of the positive trefoil is isotoped so that the braid-parallel arcs shown in Figure \ref{figure-ot-trefoil-boxed} are actually parallel.  On the right is the quasipositive modification of this diagram by the mirror of $8_{20}$.}
\label{figure-q-p-trefoil}
\end{figure}

\begin{example}
\label{example-q-p-trefoil}
Let $D$ be the figure-8 look-alike diagram of the positive trefoil (same diagram as Figure \ref{figure-ot-trefoil-boxed}).  This is a + adequate diagram, so let $\alpha_0$ be the ET state for the all-zero state where each loop is marked by $v_-$.  Figure \ref{figure-ot-trefoil-boxed} has a collection of 3 arcs braid-parallel with respect to $<\alpha>$, so let $D'$ be the diagram obtained by replacing those arcs with the quasipositive braid for the mirror of $8_{20}$.  Then $D'$ is a quasipositive modification of $D$ compatible with $\alpha$.  See Figure \ref{figure-q-p-trefoil}.
\end{example}

Alternatively, one can view quasipositive modification as a procedure where one does a series of Reidemeister II moves followed by adding positive crossings, in such a way that removing the positive crossings yields a diagram that can be simplified, using Reidemeister II only, to the original diagram.  The compatibility condition with a state cycle then limits where one can do the Reidemeister moves and add positive crossings; from this perspective, quasipositive modification is a series of positive modifications on a Reidemeister II modified diagram.

The first main benefit of quasipositive modification with respect to a nontrivial state cycle is that the state cycle can be lifted to a nontrivial state cycle for the new diagram, via an associated Jacobsson homomorphism.

\begin{thm}
\label{lifting-q-p}
Let $D$ be an oriented diagram and $<\alpha>$ an OET state for a nontrivial state cycle.  Suppose $D'$ is gotten from $D$ by quasipositive modification compatible with $<\alpha>$, and that $\Psi$ is the associated Jacobsson homomorphism from $Kh(D')$ to $Kh(D)$.  Then there exists a state cycle $\widetilde{\alpha}$ so that $\Psi(\widetilde{\alpha}) = \pm \alpha$.  If $B$ is the quasipositive braid associated to this modification, then $\widetilde{\alpha}$ is the ET state where:
\begin{itemize}
\item All crossings from $D$ are smoothed as in $\alpha$.
\item Negative crossings from $B$ are 1-smoothed, positive crossings from $B$ are 0-smoothed.
\item Every loop in $\widetilde{\alpha}$ is marked the same as $\alpha$. 
\end{itemize}
\end{thm}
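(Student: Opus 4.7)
The plan has two main stages. First, I would verify that $\widetilde{\alpha}$ as defined is actually a state cycle by checking the conditions of Proposition \ref{zeromergingcycle}. Second, I would decompose the cobordism underlying $\Psi$ into a sequence of simpler Jacobsson maps (iterated positive modifications followed by Reidemeister II moves) and track $\widetilde{\alpha}$ through each step.

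For the first stage, the key geometric observation is that every crossing of the inserted quasipositive braid $B$ is smoothed in $\widetilde{\alpha}$ according to its \emph{oriented resolution} (positive crossings 0-smoothed, negative crossings 1-smoothed). Consequently, with these smoothings $B$ collapses to $n$ parallel vertical strands inside the insertion box, so the loop structure and loop markings of $\widetilde{\alpha}$ exactly coincide with those of $\alpha$, each braid strand simply continuing one braid-parallel arc. Because every generator $\tau_j^{\pm 1}$ acts between two \emph{adjacent} strands, and the braid-parallel conditions guarantee that distinct arcs belong to distinct loops of $\alpha$ each marked by $v_-$, every new 0-trace contributed by a positive crossing of $B$ is a mergetrace between two loops marked by $v_-$. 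The pre-existing 0-traces from $D$ keep their mergetrace and $v_-$-$v_-$ status because $\alpha$ was already a state cycle, so Proposition \ref{zeromergingcycle} applies and $\widetilde{\alpha}$ is a cycle.

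For the second stage, I would write $B = \prod_k \omega_k \tau_{i_k} \omega_k^{-1}$ and realize the cobordism from the link of $D'$ to that of $D$ in two phases: 0-resolve the central positive crossings $\tau_{i_k}$ one at a time, then cancel the resulting $\omega_k \omega_k^{-1}$ factors by Reidemeister II moves, working from the inside out. Each 0-resolution step locally satisfies the hypotheses of Lemma \ref{positive-mod-lemma}, since the central crossing being removed is a positive crossing whose 0-trace connects two distinct 0-tracing loops of the current intermediate state cycle, both marked $v_-$; applying the lemma inside the larger diagram pushes the state cycle forward unchanged except for the loss of that crossing. Each Reidemeister II cancellation acts on a $\tau_j \tau_j^{-1}$ pair in which $\tau_j$ is 0-smoothed and $\tau_j^{-1}$ is 1-smoothed, precisely the oriented-resolution configuration, so the R2 Jacobsson map of Figure \ref{figure-jacobsson-r2} restricts on this state to the obvious identification with the corresponding state in the simplified diagram. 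Composing all these maps then yields $\Psi(\widetilde{\alpha}) = \pm \alpha$.

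The main obstacle I expect is the careful verification of this last claim, namely that the Reidemeister II Jacobsson map acts as the obvious identification on the particular oriented-resolution state appearing in the intermediate state cycles. This amounts to translating Jacobsson's Viro-style chain maps into the Bar-Natan convention used here and inspecting the relevant case of Figure \ref{figure-jacobsson-r2}, with special attention to the fact that both braid strands meeting the R2 region are marked $v_-$ so that no spurious terms appear. Once that local check is in hand, the remaining work is iterative bookkeeping and sign-tracking, both of which can be absorbed into the $\pm$ of the conclusion.
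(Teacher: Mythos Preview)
Your proposal is correct and follows essentially the same approach as the paper's own proof: first verify that the oriented-resolution smoothing of $B$ leaves the loop structure of $\alpha$ intact so that Proposition~\ref{zeromergingcycle} applies to $\widetilde{\alpha}$, then decompose the cobordism into 0-resolutions of the central positive crossings (handled by Lemma~\ref{positive-mod-lemma}) followed by Reidemeister~II cancellations of the remaining $\omega_k\omega_k^{-1}$ pairs, checking via Figure~\ref{figure-jacobsson-r2} that the R2 map preserves the $v_-$ markings on both strands up to sign. The paper confirms your anticipated ``main obstacle'' by pointing directly to the top-left case of Figure~\ref{figure-jacobsson-r2}, so your plan needs no further ingredients.
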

\begin{proof}
The basic strategy here is to first ``remove'' the central positive crossings, via Lemma \ref{positive-mod-lemma}, then use Reidemeister II moves to get rid of the remaining conjugate pairs from the braid word.  At each stage, the loops, and the markings on the loops, should remain the same:  the only difference is that there are less traces from crossings, since we are progressively simplifying the diagram.

\begin{figure}[ht!]
\centering
\includegraphics{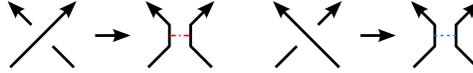}
\caption{Illustration of how the oriented resolution looks at the ET state level for positive, negative crossings respectively.}
\label{figure-oriented-resolution}
\end{figure}

The first thing to check is that $\widetilde{\alpha}$ makes sense:  it needs to have the same underlying loop structure as $\alpha$, with additional traces coming from all the extra crossings in $D'$.  Because of the orientation condition on the collection of braid-parallel arcs, the braid segment from the quasipositive modification retains all the crossing signs of the original braid:  positive crossings remain positive, negative crossings remain negative.  Checking the cases shown in Figure \ref{figure-oriented-resolution}, we see that the smoothing choice given for $\widetilde{\alpha}$ locally results in the original braid-parallel arcs, with extra traces coming from the additional crossings.  So, the loops of $\widetilde{\alpha}$ are exactly the same as for $\alpha$, and the marking choice gives us the entirety of an ET state.

All of the new crossings in $D'$ go between the braid-parallel arcs, and the choice of resolutions has the traces of those crossings going between two arcs of the collection.  Since each arc comes from its own loop in $\alpha$ and $\widetilde{\alpha}$ has the same loop structure, each of the new traces are mergetraces in $\widetilde{\alpha}$ between loops marked by $v_-$.  In particular, the new 0-traces are mergetraces between two loops marked by $v_-$, and the old 0-traces remain mergetraces between loops marked by $v_-$, since the underlying loop structure of $\widetilde{\alpha}$ is the same as that of $\alpha$ and none of the original traces have been altered.  It follows from Proposition \ref{zeromergingcycle} that $\widetilde{\alpha}$ represents a state cycle.

At each step, whether we 0-resolve a central positive crossing, or perform a Reidemeister II move, the loop structure will remain the same:  we will just be removing traces going from ET state to ET state.  So, as long as the markings on the loops in question do not change, we will have a sequence of Jacobsson homomorphisms going from $\widetilde{\alpha}$ to $\pm \alpha$ as desired.  The Positive Modification lemma (\ref{positive-mod-lemma}) shows this holds for resolving the central positive crossings, so it suffices to check this situation holds for the Reidemeister II moves.

For the Reidemeister II moves, at each step we have a negative and positive crossing adjacent in the diagram.  At the state level, these have been 1-smoothed and 0-smoothed, and the respective traces are both mergetraces going between the same two arcs.  This puts us in the situation of the top left part of Figure \ref{figure-jacobsson-r2}, where we see that the markings of the two arcs remain the same after the Jacobsson homomorphism is applied, with a possible sign change on the resulting state cycle.  So, the Reidemeister II homomorphisms act as claimed, and the resulting image of the last homomorphism will be $\pm \alpha$.
\end{proof}

\begin{remark}
This is a generalization of Lemma \ref{positive-mod-lemma}; the corresponding result in Plamenevskaya's case is that the contact invariant of a quasipositive braid is nontrivial (Corollary 1 of ~\cite{olga}).  A feature of our construction is that a quasipositive modification can be compatible with multiple nontrivial state cycles, a fact we will exploit in Section \ref{case-9_42} to construct families of H-thick knots.  Additionally, since it takes nontrivial state cycles to nontrivial state cycles, it is a process that can be iterated.
\end{remark}

Suppose the braid associated to the quasipositive modification has $k$ central positive crossings.  Then the cobordism associated to this modification has Euler characteristic $-k$, so that the quantum grading of a lift is $k$ higher than the quantum grading of the original state cycle.  As was the case with positive modification, this shift is the same for every state cycle compatible with the quasipositive modification, because the Jacobsson homomorphism induced by cobordism $\Sigma$ is a $(0, \chi(\Sigma))$ map:

\begin{cor}
Suppose $<\alpha>$ and $<\beta>$ are both compatible with the same quasipositive modification.  Then the relative grading difference of $\alpha$ and $\beta$ also holds for the lifts $\widetilde{\alpha}$ and $\widetilde{\beta}$.  In particular, if $\alpha$ and $\beta$ lie on distinct diagonals of $Kh(D)$, then $\widetilde{\alpha}$ and $\widetilde{\beta}$ lie on distinct diagonals of $Kh(D')$.
\end{cor}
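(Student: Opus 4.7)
The plan is to deduce this as an almost immediate consequence of Theorem \ref{lifting-q-p} by tracking the bigrading shift of the Jacobsson homomorphism $\Psi$. The key observation is that the cobordism $\Sigma$ realizing the quasipositive modification depends only on the data $D$, $D'$, and the braid $B$, not on the choice of compatible state cycle. Its Euler characteristic is $\chi(\Sigma) = -k$, where $k$ is the number of central positive crossings of $B$, so $\Psi\colon Kh(D') \to Kh(D)$ has bidegree $(0, -k)$ universally.

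First I would apply Theorem \ref{lifting-q-p} to produce lifts $\widetilde{\alpha}$ and $\widetilde{\beta}$ satisfying $\Psi(\widetilde{\alpha}) = \pm \alpha$ and $\Psi(\widetilde{\beta}) = \pm \beta$. Since $\Psi$ preserves homological grading and shifts quantum grading by $-k$, we have $t(\widetilde{\alpha}) = t(\alpha)$ and $q(\widetilde{\alpha}) = q(\alpha) + k$, with the same relations for $\beta$. Subtracting yields $t(\widetilde{\alpha}) - t(\widetilde{\beta}) = t(\alpha) - t(\beta)$ and $q(\widetilde{\alpha}) - q(\widetilde{\beta}) = q(\alpha) - q(\beta)$, which is exactly the claimed preservation of relative bigrading. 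The diagonal statement then follows by observing that the $\delta$-grading $\delta = 2t - q$ shifts uniformly by $-k$ under the lift, so if $\alpha$ and $\beta$ occupy distinct diagonals, their lifts do as well, with the same separation.

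If one wants to double-check this without invoking the bidegree of $\Psi$, one can compute the shift directly from the explicit description of $\widetilde{\alpha}$ in Theorem \ref{lifting-q-p}. Letting $n_B^+$ and $n_B^-$ count the positive and negative crossings of $B$ (so $n_B^+ - n_B^- = k$), the smoothing rules give $h(\widetilde{\sigma}) = h(\sigma) + n_B^-$, while $n_+$ and $n_-$ of the ambient diagram increase by $n_B^+$ and $n_B^-$ respectively, and the $v_\pm$ counts are unchanged since loops and markings transfer verbatim. Plugging into the bigrading formulas from the review section recovers the same $(0, k)$ shift. There is really no obstacle here: the content of the corollary is that this shift depends only on the modification and not on the particular state cycle, and this is manifest in either approach.
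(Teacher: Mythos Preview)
Your proposal is correct and follows essentially the same approach as the paper: the corollary is presented there without a formal proof, justified only by the preceding paragraph noting that the cobordism has Euler characteristic $-k$ and hence the Jacobsson homomorphism is a uniform $(0,-k)$ map independent of the compatible state cycle. Your write-up simply spells this out more carefully, and your optional direct bigrading computation is a nice sanity check that the paper omits.
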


In other words, if one can find nontrivial state cycles in $D$ on $n$ different diagonals which are compatible with a quasipositive modification, the lifts guarantee that $D'$ will also have homological width at least $n$.  Unfortunately, without more tools it can be difficult to find that many nontrivial state cycles. 

However, another feature of quasipositive modification is that the associated Jacobsson homomorphism also lifts state cycle Lee pairs:

\begin{displaymath}
\xymatrix{
 & \Phi(\widetilde{\alpha}) \ar[dd]^{\Psi_J} \\
 \widetilde{\alpha} \ar[dd]_{\Psi_J} \ar[ur]^{\Phi}& \\
 & \Phi(\alpha) \\
 \alpha \ar[ur]^{\Phi} & \\
}
\end{displaymath}

\begin{thm}
\label{lifting-lee}
Let $D$ be an oriented diagram, and $<\alpha>$ an OET state for a nontrivial state cycle of $D$.  Suppose $D'$ is a diagram gotten by quasipositive modification on $D$ compatible with $\alpha$, and that $\widetilde{\alpha}$ is the lift of $\alpha$.  Then $\Psi_J(\Phi_{Lee}(\widetilde{\alpha})) = \Phi_{Lee}(\Psi_J(\widetilde{\alpha})) = \Phi_{Lee}(\alpha)$.
\end{thm}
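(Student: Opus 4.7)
The second equality $\Phi_{Lee}(\Psi_J(\widetilde{\alpha})) = \Phi_{Lee}(\alpha)$ follows immediately from Theorem \ref{lifting-q-p}, which gives $\Psi_J(\widetilde{\alpha}) = \pm\alpha$ at the chain level, together with the $\Q$-linearity of $\Phi_{Lee}$ (the sign is absorbed into the $\pm$ convention). My plan for the substantive first equality $\Psi_J(\Phi_{Lee}(\widetilde{\alpha})) = \Phi_{Lee}(\alpha)$ is to expand $\Phi_{Lee}(\widetilde{\alpha})$ as a signed sum over the 0-mergetraces of $\widetilde{\alpha}$ and then apply $\Psi_J$ term by term, splitting these 0-mergetraces into two classes: (A) those inherited from 0-mergetraces of $\alpha$ (coming from crossings of $D$), and (B) the new 0-mergetraces introduced by positive crossings of the quasipositive braid $B$.

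Because $\widetilde{\alpha}$ is a state cycle, Proposition \ref{zeromergingcycle} tells us that every outgoing Khovanov edge of $\widetilde{\alpha}$ is a 0-mergetrace between two $v_-$ loops, so the only nonzero contributions to $\Phi_{Lee}(\widetilde{\alpha})$ come from the $\mu_\Phi$ entry $v_- \otimes v_- \mapsto v_+$. Writing $\widetilde{\alpha}_t$ for the chain obtained by 1-smoothing trace $t$ and replacing the two joined $v_-$ loops with a single $v_+$ loop, this yields $\Phi_{Lee}(\widetilde{\alpha}) = \sum_t \varepsilon_t\,\widetilde{\alpha}_t$ for cube signs $\varepsilon_t \in \{\pm 1\}$. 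For each class-(A) term, the modification at $t$ lies in a local region of $D$ disjoint from the inserted braid, so the same sequence of elementary Jacobsson moves making up $\Psi_J$ in the braid region (the saddle-plus-R1+ composite of Lemma \ref{positive-mod-lemma} for each central positive crossing, together with the R2 cancellations of the $\omega_k,\,\omega_k^{-1}$ conjugate pairs) acts on $\widetilde{\alpha}_t$ exactly as it does on $\widetilde{\alpha}$ in the proof of Theorem \ref{lifting-q-p}, and hence carries $\widetilde{\alpha}_t$ to $\pm\alpha_t$, precisely the $t$-contribution to $\Phi_{Lee}(\alpha)$.

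The heart of the argument, and the main expected obstacle, is showing $\Psi_J(\widetilde{\alpha}_b) = 0$ for each class-(B) term. In $\widetilde{\alpha}_b$ the positive braid crossing $b$ has been 1-smoothed (opposite to its smoothing in $\widetilde{\alpha}$) and the local merged loop is marked $v_+$. If $b$ is a central positive crossing, the first Jacobsson step touching $b$ is the 1-handle plus R1+ composite of Lemma \ref{positive-mod-lemma}, and I plan to verify directly from Figures \ref{figure-jacobsson-morse-1} and \ref{figure-jacobsson-r1} that this composite annihilates the $v_+$ configuration. If $b$ instead lies inside an $\omega_k\,\tau_{u_k}\,\omega_k^{-1}$ factor with partner $b^{-1}$, the first relevant step is an R2 Jacobsson map applied to the $(1,1)$ smoothing-pair (rather than the $(0,1)$-pair present in $\widetilde{\alpha}$), and inspection of Figure \ref{figure-jacobsson-r2} should show that the corresponding entry vanishes on the relevant markings. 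Carefully tracking the Jacobsson formulas, the cube signs, and any interaction between a class-(A) merging and the braid-parallel arcs through this case analysis will yield $\Psi_J(\Phi_{Lee}(\widetilde{\alpha})) = \pm\Phi_{Lee}(\alpha)$ with sign matching Theorem \ref{lifting-q-p}.
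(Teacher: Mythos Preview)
Your plan is correct and matches the paper's own proof almost exactly. The paper likewise expands $\Phi_{Lee}(\widetilde{\alpha})$ into a sum of ET states indexed by the 0-mergetraces of $\widetilde{\alpha}$ and then runs the same three-way case analysis: terms coming from central positive crossings of the braid (your class~(B), first subcase) die under the 1-handle\,+\,R1$^+$ composite because the relevant crossing is now 1-smoothed and that smoothing is not in the domain of the R1$^+$ map; terms coming from noncentral positive braid crossings (your class~(B), second subcase) die under the corresponding R2 map because both crossings of the R2 pair are 1-smoothed; and terms coming from 0-traces of $D$ (your class~(A)) survive through every braid-region move to the matching summand of $\Phi_{Lee}(\alpha)$, with a single global sign applied uniformly. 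One small caution on your phrasing: ``the modification at $t$ lies in a local region of $D$ disjoint from the inserted braid'' is not quite accurate, since $t$ may merge two of the braid-parallel loops and the resulting $v_+$ loop then threads through the braid region; what actually makes the class-(A) argument go through is that the \emph{smoothing pattern} at the braid crossings is unchanged in $\widetilde{\alpha}_t$, so each elementary Jacobsson map lands in its ``preserving'' case regardless of the loop markings.
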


\begin{proof}
By Theorem \ref{lifting-q-p} we have a state cycle lift of $\alpha$, $\widetilde{\alpha}$.  The first thing to consider is how $\Phi$ acts on these two state cycles.  In both cases, every 0-trace is a mergetrace between two loops marked by $v_-$, so the edge maps from $\Phi$ will be of the form:
\begin{align*}
\mu_\Phi: \bigcirc \phantom{\otimes} \bigcirc & \longrightarrow \bigcirc \\
v_- \otimes v_- &\longrightarrow v_+
\end{align*}
Each of these edge maps targets a different state, so $\Phi(\alpha)$ and $\Phi(\widetilde{\alpha})$ will be a sum of ET states, each of which has a merged loop marked by $v_+$ corresponding to the associated 0-mergetrace.

Furthermore, regardless of whether the Lee spectral sequence converges at $E_2$ in $D'$, $\Phi(\widetilde{\alpha})$ will still be a cycle, since $\Phi$ is a homomorphism on $Kh(D')$.  We want to show that this maps down to $\pm \Phi(\alpha)$ under the Jacobsson homomorphism for the quasipositive modification.  This can be broken down into three cases, by the kinds of terms in $\Phi(\widetilde{\alpha})$:  image states coming from 0-traces of central positive crossings, image states coming from 0-traces of other positive crossings in the braid, and image states coming from 0-traces of $\alpha$.  The claim is that under the Jacobsson homomorphisms, the first two kinds of terms vanish, and the last terms survive to the equivalent terms of $\Phi(\alpha)$.

\begin{figure}[ht!]
\centering
\includegraphics{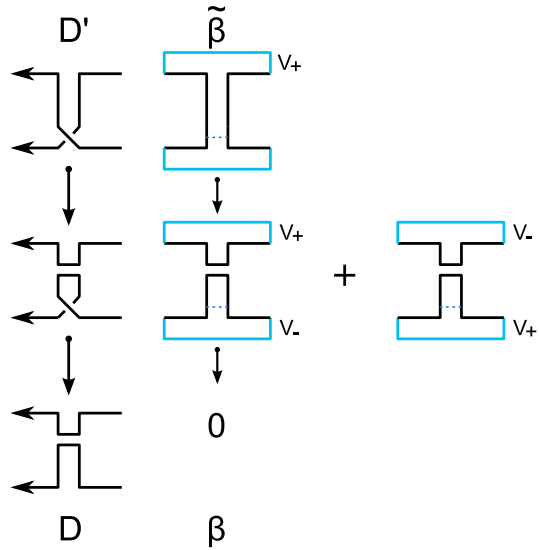}
\caption{ET states and diagrams showing how a term coming from a central positive crossing vanishes under the Jacobsson homomorphism associated to 0-resolving that crossing.}
\label{figure-lee-case1}
\end{figure}

Case 1:  Suppose $\widetilde{\beta} \in \Kh(D')$ is the $\Phi$ edge image of a 0-trace of a central positive crossing.  Jacobsson homomorphisms commute up to sign, so it suffices to show $\widetilde{\beta}$ vanishes under the 0-resolution of this central positive crossing.  As shown in Figure \ref{figure-lee-case1}, this ET state survives the homomorphism for the 1-handle, but vanishes under the homomorphism for positive Reidemeister I.  Consulting Figure \ref{figure-jacobsson-r1}, one sees that each state where the 1-trace of that positive crossing occurs is not listed, which means that it is sent to 0 under this Reidemeister I homomorphism.

\begin{figure}[ht!]
\centering
\includegraphics{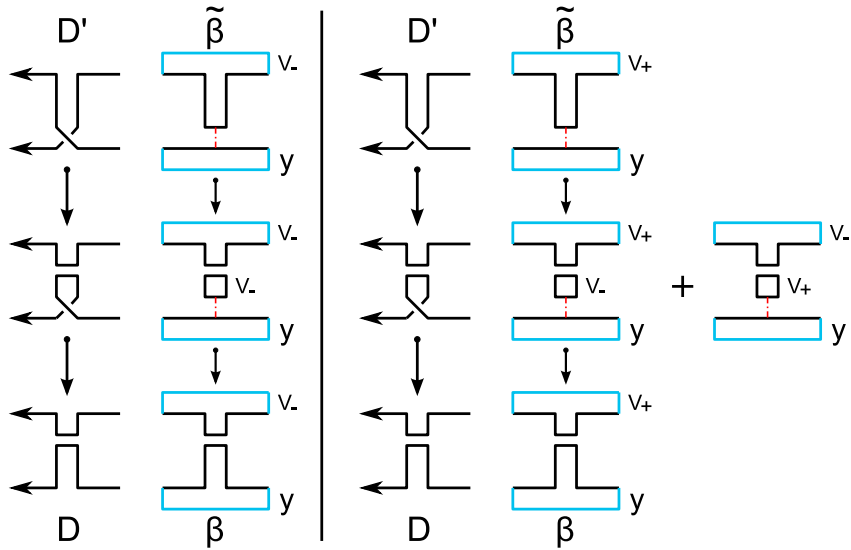}
\caption{ET states and diagrams showing how ET states which have 0-traces for a positive crossing behave under 0-resolution of that positive crossing.}
\label{figure-lee-case2a}
\end{figure}

Case 2:  Suppose $\widetilde{\beta} \in \Kh(D')$ is the $\Phi$ edge image of a 0-trace of a noncentral positive crossing from the braid.  Figure \ref{figure-lee-case2a} shows that $\widetilde{\beta}$ is ``preserved'' under 0-resolution of every central positive crossing, in the sense that the underlying loop markings and state remain the same under each such Jacobsson homomorphism.  To see that it vanishes under the Reidemeister II move which cancels this 1-smoothed crossing, consider the lefthand side of Figure \ref{figure-jacobsson-r2}.  $\widetilde{\beta}$ will have 1-smoothed both the positive and negative crossing of this Reidemeister II pair, but the only nonzero images under the Jacobsson homomorphism come from 0-smoothing one of the two crossings and 1-smoothing the other.  So, $\widetilde{\beta}$ vanishes under this Jacobsson homomorphism as claimed. 

Case 3:  Suppose $\widetilde{\beta} \in \Kh(D')$ is the $\Phi$ edge image of a 0-trace not coming from the braid.  As in Case 2, $\widetilde{\beta}$ is preserved under the 0-resolution of every central positive crossing.  As for the Reidemeister II moves, since the 1-smoothed crossing does not occur on any of the Reidemeister II pairs, we fall into the top lefthand case of Figure \ref{figure-jacobsson-r2}, which preserves the markings of the loops involved each time the homomorphism is applied, up to sign.  So, the net result is $\pm \Phi_e(\alpha)$, where $e$ is the associated edge.

The last thing one must deal with is whether there are any global problems with all the sign changes that are taking place in the Jacobsson homomorphisms.  But, each of the homomorphisms will apply the same sign change to all of the surviving terms, so that we really do have $\Psi(\Phi(\widetilde{\alpha})) = \pm \Phi(\alpha)$ as claimed. 
\end{proof}

\begin{example}
\label{example-q-p-lee}
Suppose $D$ is a quasipositive diagram for a knot of homological width less than 4, with at least one negative crossing.  As seen in Proposition \ref{lee-sigma0}, if $\alpha_0$ is the usual + adequate all-zero cycle, then $\alpha_0$ and $\Phi(\alpha_0)$ are nontrivial homology classes.  Therefore, $\widetilde{\alpha_0}$ and $\Phi(\widetilde{\alpha_0})$ are nontrivial homology classes in $D'$.  This theorem thus provides a method of lifting the two adjacent diagonals these homology classes lie on, so that there is a diagonal that lies above the diagonal for $\widetilde{\alpha_0}$.

If $D'$ remained + adequate, this would follow immediately from Khovanov's analysis of the Krull-Schmidt decomposition of the Khovanov chain complex  in ~\cite{khov-patterns}, but $D'$ as a diagram does not remain + adequate under quasipositive modification.  Without knowing that $\widetilde{\alpha_0}$ is minimal in $Kh(D')$, one cannot \emph{a priori} tell whether the second diagonal lies above or below the diagonal for $\widetilde{\alpha_0}$. 

Furthermore, this process of lifting such Lee pairs can be iterated even when the resulting $D'$ is not + adequate:  for nontriviality of the lifts, all we require is that $[ \alpha ]$ and $[ \Phi(\alpha) ]$ are both nontrivial homology classes.
\end{example}

\section{Families of H-thick knots via Quasipositive Modification}
\label{case-9_42}

In this section, we examine the state cycles of a diagram of the knot $9_{42}$ and exhibit how to perform several compatible quasipositive modifications.  In this way, we construct several families of H-thick knots which cannot be detected by Khovanov's thickness criterion, and a sequence of prime knots and links related by quasipositive modification for which width is increasing.  Finally, we discuss other potential sources of base knots for quasipositive modification.

\subsection{The Base Knot $9_{42}$}

$9_{42}$ is an H-thick knot, and this subsection will show three homology classes, one in each of its diagonals, which we can lift by quasipositive modification.  Table \ref{table-9_42} lists its rational Khovanov homology according to the Knot Atlas ~\cite{bn-atlas}, with the three homology classes we will lift marked by shading.

\begin{table}[htbp]
\begin{center}
\setlength\arrayrulewidth{2pt}
\setlength\doublerulesep{2pt}
\begin{tabular}{|q||q|q|q|q|q|q|q|} \hline
 & -4 & -3 & -2 & -1 & \phantom{-}0 & \phantom{-}1 & \phantom{-}2 \\ \hline\hline
\phantom{-}7  	&   	 &   	 & 		 &  	 &  	 &  	 & 	1  \\ \hline
\phantom{-}5  	&   	 &   	 &  	 &  	 &  	 &  	 &     \\ \hline
\phantom{-}3  	&   	 &   	 & 	 	 & 		 & 	   1 &		1 &    \\ \hline
\phantom{-}1  	&   	 &   	 &  	 &	   1 &		1 &  	 &    \\ \hline
-1 				&   	 &		 & 		 &	   1 &{\co}1 &  	 &    \\ \hline
-3 				&   	 &{\co}1 &		1 &  	 &  	 &  	 &    \\ \hline
-5 				& 	     &   	 &  	 &  	 &  	 &  	 &    \\ \hline
-7 				& {\co}1 &   	 &   	 &  	 &  	 &  	 &    \\ \hline
\end{tabular}
\end{center}
\caption{Rational Khovanov homology of $9_{42}$.  Homology classes which will be lifted under quasipositive modification have been shaded.}\label{table-9_42}
\end{table}

The first thing to note is that the ``usual'' diagram from Bar-Natan's Knot Atlas ~\cite{bn-atlas} for $9_{42}$ is + adequate; an isotoped form of this diagram is shown in Figure \ref{figure-9_42-sigma0}.  This gives us a state cycle representative, $\alpha_0$, for the minimal quantum and homological graded entry of its rational Khovanov homology, (-4, -7).  Following Example \ref{example-q-p-lee}, this gives us a second nontrivial, liftable homology class from its Lee pair at bigrading (-3,-3), since $9_{42}$ has width 3.

\begin{figure}[ht!]
\centering
\includegraphics{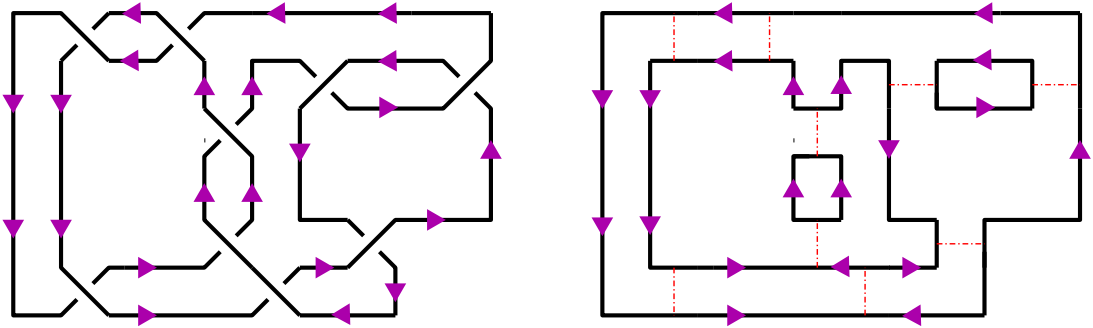}
\caption{Usual diagram for $9_{42}$ and the all-zero OT state, showing this diagram is + adequate.}
\label{figure-9_42-sigma0}
\end{figure}

For the third homology class, we would like to find a state cycle representative for the ``thick'' diagonal of $9_{42}$'s homology.  There is only one nontrivial homology entry for that diagonal, at bigrading (0, -1).  The s-invariant of $9_{42}$ is 0, which would give us a representative for this class if we had an explicit form of this generator.  But, we lack a general form for this generator in terms of the Khovanov chain generators.

However, the ``Seifert state'', gotten by smoothing the diagram according to the rules for constructing the canonical Seifert surface, is an adequate state.  And, marking it as a state cycle, with $v_-$ on the 0-tracing loops and $v_+$ on the 1-block, gives a state cycle $\alpha_s$ of the desired bigrading, (0, -1).  Since there is only one loop in its 1-block, and its quantum grading is $s-1$, Theorem \ref{thm-s-singleloop} guarantees this state cycle is nontrivial.  Figure \ref{figure-9_42-seifert} shows the associated OET state for this state cycle.  The ET state has marking $v_- \otimes v_- \otimes v_- \otimes v_+$, 4 loops, and 4 1-smoothings in a 9 crossing diagram with 4 negative and 5 positive crossings, for those wanting to verify the bigrading. 

\begin{figure}[ht!]
\centering
\includegraphics{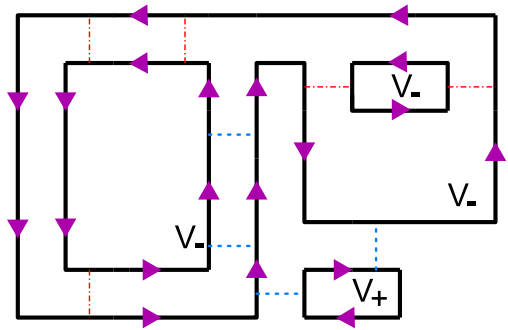}
\caption{OET for $\alpha_s$, the Seifert state cycle for $9_{42}$.}
\label{figure-9_42-seifert}
\end{figure}

To lift these three homology classes, we need to choose a set of braid-parallel arcs compatible with both state cycles, and a quasipositive braid.  The next two subsections will do this for two variants of this diagram, and analyze two families based on these choices of braid-parallel arcs.  

\subsection{Adding Positive Twists}
\label{example-9_42-postwist}

Looking over the OET states for $\alpha_0$ and $\alpha_s$, it is easy to find pairs of braid-parallel arcs compatible with both states:  Figure \ref{figure-9_42-bp-pair} shows the pair we will examine here.  Since there are only two arcs, whatever quasipositive braid we would use for the modification will in fact be Reidemeister equivalent to the braid word $\tau_1^n$:  $n$ positive half-twists between those two strands.  By the quasipositive modification theorems, this gives a simple family of thick links $K_n$, where $n$ is the number of half-twists. 

\begin{figure}[ht!]
\centering
\includegraphics{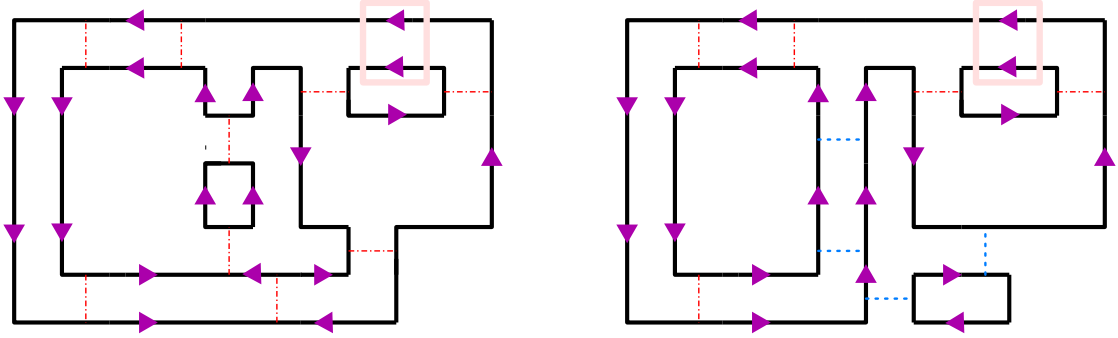}
\caption{OET states for $\alpha_0$ and $\alpha_s$, with a pair of compatible braid parallel arcs marked.}
\label{figure-9_42-bp-pair}
\end{figure}

Using SnapPea ~\cite{snappea}, one can check that the surgery link associated to this positive twisting is hyperbolic.  Using Thurston's hyperbolic Dehn surgery theorem ~\cite{thurston-gt3m}, it follows that all but a finite number of such $K_n$ must be hyperbolic.  So, all but a finite number of the $K_{2n}$ must be prime knots, showing that this family cannot be obtained by taking connect sums of something with a thick knot.

Furthermore, adding positive twists lends itself well to a recursive formula for the Jones polynomial of these knots.  Recall that the skein formula for the Jones polynomial $V(L)$ is given by:

\begin{equation}
q^{-1} V(L_+) - q V(L_-) = (q^{1/2} - q^{-1/2}) V(L_o) 
\label{eq-jones-skein}
\end{equation}

Here, $L_+$, $L_-$ and $L_o$ are the link where a crossing is circled and replaced by a positive crossing, a negative crossing, and the oriented smoothing, respectively.  To apply this to our $K_n$, note that changing one of the added positive half-twists to a negative half-twist will cancel the next positive half-twist, leaving $K_{n-2}$, while the oriented resolution of a positive half-twist just reduces the number of positive half-twists by 1, yielding $K_{n-1}$.  So, we can rewrite \eqref{eq-jones-skein} as the following recursion:

\begin{equation}
V(K_n) = q^2 V(K_{n-2}) + (q^{3/2} - q^{1/2}) V(K_{n-1}) 
\label{eq-postwist-recursion}
\end{equation}

Using KnotTheory' ~\cite{bn-knottheory}, one can verify the following calculations, which begin the recursion and suggest that the Jones polynomials of the $K_{2n}$ knots is alternating:

\begin{align*}
V(K_0) &= q^3 - q^2 + q - 1 + q^{-1} - q^{-2} + q^{-3} \\
V(K_1) &= q^{9/2} - q^{7/2} + q^{5/2} -2 q^{3/2} + q^{1/2} - 2 q^{-1/2} + q^{-3/2} - q^{-5/2} \\
V(K_2) &= q^6 - q^5 + q^4 - 2 q^3 + 2 q^2 - 2q + 2 - q^{-1} + q^{-2} \\
V(K_3) &= q^{15/2} - q^{13/2} + q^{11/2} - 2 q^{9/2} + 2 q^{7/2} - 3 q^{5/2} + 2 q^{3/2} - 2 q^{1/2} + q^{-1/2} - q^{-3/2} \\
V(K_4) &= q^9 - q^8 + q^7 - 2 q^6 + q^5 - 3 q^4 + 3 q^3 - 2 q^2 + q - 1 + q^{-1} 
\end{align*}

In fact, this pattern continues, leading to:

\begin{thm}
\label{thm-postwist-jones-alt}
The Jones polynomial of the knots $K_{2n}$ is alternating, in the following sense:  if $V(K_{2n}) = \sum a_t q^t$, then either all the $a_{2k}$ terms are positive and the $a_{2k+1}$ terms negative, or vice versa, with no zero coefficients within $ \Span(V(K_{2n}))$.
\end{thm}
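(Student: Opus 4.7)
The plan is to analyze the integer coefficients of $V(K_n)$ directly via the skein recursion \eqref{eq-postwist-recursion}. Write
\[V(K_n) = \sum_{k=0}^{n+6} (-1)^k a_k^{(n)} q^{T_n - k}, \qquad T_n = 3 + \tfrac{3n}{2},\]
so that the theorem amounts to showing $a_k^{(n)} > 0$ for every $k$ in the span $[0, n+6]$ and every even $n$. Adopt the convention $a_k^{(m)} = 0$ for $k$ outside $[0,m+6]$. Extracting the coefficient of $q^{T_n - k}$ on both sides of \eqref{eq-postwist-recursion} (and using that $T_{n-2}+2 = T_n-1$ and $T_{n-1}+\tfrac32 = T_n$) yields the clean coefficient-level recursion
\[a_k^{(n)} = a_k^{(n-1)} + a_{k-1}^{(n-1)} - a_{k-1}^{(n-2)}, \qquad n \geq 2.\]

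The minus sign prevents a naive positivity induction. To sidestep this, I would study the vertical differences $E_k^{(n)} := a_k^{(n)} - a_k^{(n-1)}$. Substituting the coefficient recursion yields
\[E_k^{(n)} = a_{k-1}^{(n-1)} - a_{k-1}^{(n-2)} = E_{k-1}^{(n-1)},\]
so $E_k^{(n)}$ depends only on $k - n$. Iterating gives $E_k^{(n)} = f(k-n+1)$ for every $n \geq 1$, where the single-variable function $f(j) := E_j^{(1)} = a_j^{(1)} - a_j^{(0)}$ is determined entirely by the base data $V(K_0)$ and $V(K_1)$. Telescoping in $n$ then produces the explicit formula
\[a_k^{(n)} = a_k^{(0)} + \sum_{j = k-n+1}^{k} f(j).\]

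With this formula the positivity becomes a direct check. A short calculation from the stated $V(K_0)$ and $V(K_1)$ shows that $f$ is nonnegative everywhere, is supported on $\{3,5,7\}$, and in particular satisfies $f(7) = 1$. For $k \in [0,6]$ the anchor term $a_k^{(0)} = 1$ already gives positivity. For $k \in [7, n+6]$ the anchor term vanishes, but in this regime $k \geq 7$ and $k - n + 1 \leq 7$, so the summation window $[k-n+1, k]$ contains the index $7$ and the sum picks up at least $f(7) = 1$. Hence $a_k^{(n)} \geq 1$ throughout the span, and the signs $(-1)^k$ deliver exactly the alternating behavior claimed for $V(K_{2n})$.

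The main obstacle is identifying the right quantity to track: positivity of $a_k^{(n)}$ itself is not preserved by any obvious monotone argument, but the observation that the vertical differences of the coefficient arrays satisfy the diagonal shift $E_k^{(n)} = E_{k-1}^{(n-1)}$ reduces the entire question to a finite computation in the two base polynomials. The argument in fact works uniformly for every $n \geq 0$; the restriction to even $n$ in the statement is only because integer powers of $q$ occur precisely when $n$ is even.
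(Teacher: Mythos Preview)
Your argument is correct and considerably cleaner than the paper's own proof. The coefficient recursion $a_k^{(n)} = a_k^{(n-1)} + a_{k-1}^{(n-1)} - a_{k-1}^{(n-2)}$ is right (it drops out of \eqref{eq-postwist-recursion} exactly as you describe), the diagonal shift $E_k^{(n)} = E_{k-1}^{(n-1)}$ follows immediately, and the telescoped formula
\[
a_k^{(n)} = a_k^{(0)} + \sum_{j=k-n+1}^{k} f(j)
\]
checks out against the base data (and indeed catches what appears to be a typo in the listed $V(K_4)$: the coefficients of $q^5$ and $q$ should be $2$, not $1$). The positivity argument covering $k\in[0,6]$ via the anchor and $k\in[7,n+6]$ via $f(7)=1$ is complete, and you correctly note the vanishing outside $[0,n+6]$, pinning down the span.

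The paper takes a quite different route: after the same change of variables it runs a multi-claim induction, first establishing the leading and trailing powers and coefficients, then proving a monotonicity inequality $|a_j^n|\geq |a_{j-3}^{n-1}|$ between consecutive polynomials, and finally carrying out a careful sign-by-sign analysis of the recursion to show alternation is preserved. Your approach bypasses all of this by producing a closed formula for every coefficient in terms of the two base polynomials; positivity then becomes a finite check on the support of $f$. What you gain is brevity and an explicit expression for the coefficients; what the paper's approach offers is a template that transfers with little change to the related family $K(9_{42},q_1,n)$ treated later, where only the leading and trailing data differ.
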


\begin{proof}
The proof is by induction on the number of half-twists.  For convenience, we will consider everything in terms of the renormalization $P(K_n)(x) = V(K_n)(x^2)$: this way we can describe the odd $n$ case without worrying about all the half-powers.  In this renormalization, \eqref{eq-postwist-recursion} becomes:

\begin{equation}
P(K_n) = x^4 P(K_{n-2}) + (x^3 - x) P(K_{n-1}) 
\label{eq-p-recursion}
\end{equation}

Rewriting the base examples in terms of this renormalization, we have:

\begin{align*}
P(K_0) &= x^6 - x^4 + x^2 - 1 + x^{-2} - x^{-4} + x^{-6} \\
P(K_1) &= x^9 - x^7 + x^5 -2 x^3 + x - 2 x^{-1} + x^{-3} - x^{-5} \\
P(K_2) &= x^{12} - x^{10} + x^8 - 2 x^6 + 2 x^4 - 2x^2 + 2 - x^{-2} + x^{-4} \\
P(K_3) &= x^{15} - x^{13} + x^{11} - 2 x^{9} + 2 x^{7} - 3 x^{5} + 2 x^{3} - 2 x +  x^{-1} - x^{-3} \\
P(K_4) &= x^{18} - x^{16} + x^{14} - 2 x^{12} + x^{10} - 3 x^8 + 3 x^6 - 2 x^4 + x^2 - 1 + x^{-2} 
\end{align*}

Furthermore, as a consequence of the skein relations for the Jones polynomial, we know that $P(K_{2n})$ will have only even powers of $x$, while $P(K_{2n+1})$ will only have odd powers, since the respective link familes have an odd and even number of components, respectively.  To account for these two cases at once, we will consider the following induction hypothesis:

\begin{induction}
For all $n \leq k$, $P(K_n)$ has leading coefficient $+1$; if the leading power is $t$, then coefficients $a_j^n$ of $P(K_n)$ will be positive for $j \equiv t \pmod{4}$, and negative for $j \equiv t + 2 \pmod{4}$.  Coefficients of powers $i$ within $\Span(P(K_{k+1}))$ so that $i \equiv t \pmod{2}$ are nonzero.  The leading power $t$ of $P(K_n)$ will be $3 (n+2)$, and the trailing power will be $n-6$. 
\end{induction}

We will build this up by a few other inductive claims first.

\begin{claim}
$P(K_n)$ has leading power $3 (n+2)$ and leading coefficient $+1$.
\label{claim-leadingpower}
\end{claim}

By inspection, it is clear this holds for the base cases $P(K_0)$ and $P(K_1)$.  So, assume everything holds for $n \leq k$, and consider $P(K_{k+1})$.  The leading powers of $P(K_{k})$ and $P(K_{k-1})$ are $3k+6$ and $3k+3$ respectively, so by the skein formula, the highest powers each will contribute to $P(K_{k+1})$ are $3 + (3k+6) = 3k + 9$ and $4 + (3k + 3) = 3k + 7$ respectively.  Clearly $3k+9$ will be the highest power of the result, and because the sole such term comes from $P(K_{k})$ and is multiplied by $+1$, the resulting coefficient will remain $+1$ by induction.

\begin{claim}
$P(K_n)$ has trailing power $n-6$ and trailing coefficient $(-1)^n$.
\label{claim-trailingpower}
\end{claim}

By inspection, this holds for $P(K_0)$ and $P(K_1)$.  Now, assume this holds for all $ n \leq k $.  Observe that $-x P(K_k)$ will contribute lowest power $k-5$, while $x^4 P(K_{k-1})$ will contribute lowest power $k-3$.  So, the trailing term will come from $-x P(K_k)$, and the coefficient will be $-1$ multiplied by the trailing coefficient of $P(K_k)$, namely $(-1)*(-1)^k = (-1)^{k+1}$.

\begin{claim}
Denote the coefficient of $x^j$ in the expansion of $P(K_n)$ by $a_j^n$.  Then $ \| a_j^n \| \geq \| a_{j-3}^{n-1} \| $.
\label{claim-shiftbythree}
\end{claim}

\begin{proof}
Inspecting the first few examples, it is clear this holds for the base cases.  To account for the general case, we need to rewrite the recursion in terms of the coefficients.  For a fixed power $x^j$ and polynomial $P(K_n)$, \eqref{eq-p-recursion} becomes:

\begin{equation}
a_j^n = a_{j-4}^{n-2} + a_{j-3}^{n-1} - a_{j-1}^{n-1}
\label{eq-coefficient-recursion}
\end{equation} 

So, assume $ \| a_j^n \| \geq \| a_{j-3}^{n-1} \| $ holds for $n < k$.  Expanding by \eqref{eq-coefficient-recursion}, we get:

\begin{equation}
\| a_j^k \| = \| a_{j-4}^{k-2} + a_{j-3}^{k-1} - a_{j-1}^{k-1} \|
\label{eq-claim2-step1}
\end{equation}

We want to compare this to $ \| a_{j-3}^{k-1} \| $, which appears as a summand in \eqref{eq-claim2-step1}.  Let's first analyze the relative sign distribution inside the absolute value sign:  we will normalize so that $a_{j-3}^{k-1}$ is positive, multiplying the interior of the absolute value by $-1$ if needed.  $a_{j-1}^{k-1}$ is a coefficient from the same polynomial which differs by an $x^2$, so it must appear with the opposite sign from $a_{j-3}^{k-1}$.  So, $- a_{j-1}^{k-1}$ is positive when $a_{j-3}^{k-1}$ is positive.

On the other hand, $a_{j-4}^{k-2}$ comes from $P(K_{k-2})$, an $x^3$ factor lower term than $a_{j-1}^{k-1}$, which is negative.  We know from Claim \ref{claim-leadingpower} that the leading power of $P(K_{k-1})$ is $3k+3$, while that of $P(K_{k-2})$ is $3k$.  So, $a_{j-4}^{k-2}$ must match the negative sign of $a_{j-1}^{k-1}$.  This means that within the absolute value sign of \eqref{eq-claim2-step1}, when $a_{j-3}^{k-1}$ and  $- a_{j-1}^{k-1}$ are both positive terms, $a_{j-4}^{k-2}$ will be negative.

The upshot is that if $\| a_{j-1}^{k-1} \| \geq \| a_{j-4}^{k-2} \|$, then $ a_{j-4}^{k-2} - a_{j-1}^{k-1} \geq 0$.  In such a case, $ a_{j-4}^{k-2} - a_{j-1}^{k-1} $ and $ a_{j-3}^{k-1} $ have the same sign under the absolute value sign, giving us the following equality:

\begin{align}
\| a_{j-4}^{k-2} + a_{j-3}^{k-1} - a_{j-1}^{k-1} \| & = 
\| a_{j-4}^{k-2} - a_{j-1}^{k-1} \| + \| a_{j-3}^{k-1} \| \\
& \geq \| a_{j-3}^{k-1} \|
\end{align}

But, $\| a_{j-1}^{k-1} \| \geq \| a_{j-4}^{k-2} \|$ is simply an index shift of the induction hypothesis, completing the proof of this claim.
\end{proof}

We're now ready to tackle the last part of the induction.  The claims have already dealt with the leading and trailing coefficients and powers, so we have only to show the alternating signs of the coefficients.

Note first that all the terms from the $x^3 P(K_{k})$ expansion will match the parity of the terms from the $-x P(K_{k})$ expansion, since $x^3$ and $-x$ differ by an even power.  However, all terms from the expansion of $x^4 P(K_{k-1})$ will have the opposite sign parity, since the leading power is $3k+7$, which differs by 2 from the lead power of $x^3 P(K_{k})$.

To account for this discrepancy, we need to show that, for each power, we get a larger coefficient sum from $(x^3 - x) P(K_{k})$ than from $x^2 P(K_{k-1})$.  As in Claim \ref{claim-shiftbythree}, we will show this on a coefficient by coefficient level.  We want to show that the coefficient of $x^j$ in the expression $(x^3 - x) P(K_{k})$ is greater in absolute value than that of $x^k$ in the expansion of $x^2 P(K_{k-1})$.  At the coefficient level, this becomes:

\begin{equation}
\| a_{j-3}^k - a_{j-1}^k \| > \| a_{j-4}^{k-1} \|
\label{eq-alternating-coeffs}
\end{equation}

We then expand the lefthand side of this using the recursion relation, getting:    

\begin{align}
 & \| a_{j-3}^k - a_{j-1}^k \| \\
=& \| (a_{j-7}^{k-2} + a_{j-6}^{k-1} - a_{j-4}^{k-1}) - ( a_{j-5}^{k-2} + a_{j-4}^{k-1} - a_{j-2}^{k-1}) \| \\
=& \| a_{j-6}^{k-1} - 2 a_{j-4}^{k-1} + a_{j-2}^{k-1} + a_{j-7}^{k-2} - a_{j-5}^{k-2} \| \label{eq-abs-result}
\end{align}

Now, consider the relative parity of each term in \eqref{eq-abs-result}.  We would like to compare this to $a_{j-4}^{k-1}$, so let's assume that coefficient is negative (if it were positive, we could multiply the whole sum in the absolute value by $-1$).
 Clearly, $ - 2 a_{j-4}^{k-1}$ will be positive with this sign choice, and ultimately we would like to break off one of these copies as a separate absolute value part, as we did in Claim \ref{claim-shiftbythree}.
 
$a_{j-6}^{k-1}$ differs in power from this by 2, so it has the opposite sign, becoming positive.  The same thing happens to $ a_{j-2}^{k-1} $.  In contrast, $a_{j-7}^{k-2}$ comes from the next lower polynomial, and differs by a power of 3, so it has the same negative sign as $a_{j-4}^{k-1}$.  On the other hand, $a_{j-5}^{k-2}$ differs from $a_{j-7}^{k-2}$ by a power of 2, so must be positive:  this makes $-a_{j-5}^{k-2}$ a negative number.  In summary, our sign choice yields the following parity in the absolute value sign:

\begin{equation}
\| \underbrace{a_{j-6}^{k-1}}_+ - \underbrace{2 a_{j-4}^{k-1}}_- + \underbrace{a_{j-2}^{k-1}}_+ + \underbrace{a_{j-7}^{k-2}}_- - \underbrace{a_{j-5}^{k-2}}_+ \|
\label{eq-signanalysis}
\end{equation}

Termwise, the left three terms become positive, while the right two are negative.  If we are to use our earlier trick of separating out one of the ``positive'' terms, $- a_{j-4}^{k-1}$, we need to check that the remaining difference remains ``positive''.  Namely, we need to show:

\begin{equation}
\| a_{j-6}^{k-1} - a_{j-4}^{k-1} + a_{j-2}^{k-1} \| \geq
\| a_{j-7}^{k-2} - a_{j-5}^{k-2} \|
\label{eq-subclaim-arewedoneyet}
\end{equation}

But, by induction using \eqref{eq-alternating-coeffs}, we already know that:

\begin{equation}
\| a_{j-6}^{k-1} - a_{j-4}^{k-1} \| > \| a_{j-7}^{k-2} \|
\label{eq-savedbyinduction}
\end{equation}

And, Claim \ref{claim-shiftbythree} gives us the remaining piece, suitably shifted:

\begin{equation}
\| a_{j-2}^{k-1} \| \geq \| a_{j-5}^{k-2} \|
\label{eq-thisiswhywedidclaim2}
\end{equation}

Now, thanks to all the lefthand terms of \eqref{eq-subclaim-arewedoneyet} being positive, we can break apart the absolute value with equality, and then apply \eqref{eq-savedbyinduction} and \eqref{eq-thisiswhywedidclaim2} to verify the inequality of \eqref{eq-subclaim-arewedoneyet}:

\begin{align}
& \| a_{j-6}^{k-1} - a_{j-4}^{k-1} + a_{j-2}^{k-1} \| \\
= & \| a_{j-6}^{k-1} - a_{j-4}^{k-1}\|  +  \|a_{j-2}^{k-1} \| \\
> & \| a_{j-7}^{k-2} \| + \| a_{j-5}^{k-2}\| \\
= & \| a_{j-7}^{k-2} - a_{j-5}^{k-2} \|
\end{align}

The very last equality comes, again, from our earlier sign analysis in \eqref{eq-signanalysis}.  Now, since \eqref{eq-subclaim-arewedoneyet} holds, we can break up our earlier absolute value from \eqref{eq-abs-result} with equality, obtaining:

\begin{align}
 & \| a_{j-3}^k - a_{j-1}^k \| \\
=& \| a_{j-6}^{k-1} - 2 a_{j-4}^{k-1} + a_{j-2}^{k-1} + a_{j-7}^{k-2} - a_{j-5}^{k-2} \| \\
= & \| a_{j-6}^{k-1} - a_{j-4}^{k-1} + a_{j-2}^{k-1} + a_{j-7}^{k-2} - a_{j-5}^{k-2} \| + \| a_{j-4}^{k-1} \| \\
> & \| a_{j-4}^{k-1} \|
\end{align}

This verifies \eqref{eq-alternating-coeffs}, completing the claim that the coefficients of $P(K_{k+1})$ alternate in the appropriate fashion.  The remaining claim to verify is that the appropriate coefficients within $\Span(P(K_{k+1}))$ are nonzero.  This follows largely from Claim \ref{claim-shiftbythree}:  this tells us that coefficients of powers 3 higher than nonzero coefficients of $P(K_k)$ will be nonzero, since their magnitude is greater than that of nonzero coefficients of $P(K_k)$.  This covers coefficients of powers $(3k+6)+3) = 3k+9$ down to $(k-6) + 3 = k-3$, leaving only the trailing coefficient, of $x^{k-5}$ in question.  But by Claim \ref{claim-trailingpower}, we know this coefficient is nonzero.
\end{proof}

\begin{cor}
$K_{2n}$ cannot be detected as thick using the alternating Jones polynomial test of Khovanov.
\end{cor}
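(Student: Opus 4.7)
The plan is to apply Theorem \ref{thm-postwist-jones-alt} together with the contrapositive of Khovanov's alternating Jones polynomial criterion, which states that the Jones polynomial of an H-thin knot is alternating; equivalently, the only way this criterion can \emph{detect} thickness is by exhibiting a non-alternating Jones polynomial. So the proof is essentially a one-line appeal: Theorem \ref{thm-postwist-jones-alt} has already established that the coefficients of $V(K_{2n})$ strictly alternate in sign across consecutive nonzero powers of $q$ within $\Span(V(K_{2n}))$, with no internal zero coefficients, which is precisely the standard notion of an alternating Jones polynomial used in Khovanov's test. Hence the test gives no information, i.e.\ cannot detect thickness, for any member of the family $\{K_{2n}\}$.

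The one bookkeeping point I would address is the parity issue: $V(K_{2n})$ is supported only on powers of $q$ of a single parity (since $K_{2n}$ has an odd number of components matching $9_{42}$ as a knot), so ``alternating'' means alternating across the actual support, as formulated in Theorem \ref{thm-postwist-jones-alt}. This matches the convention under which Khovanov's criterion is stated. Beyond this, there is no obstacle: the substantive work was done in proving Theorem \ref{thm-postwist-jones-alt}, and the present corollary is just its logical consequence. The conceptual payoff, which I would emphasize in a concluding sentence, is that the $K_{2n}$ are nonetheless provably H-thick by Theorem \ref{lifting-q-p} and Theorem \ref{lifting-lee} applied to the three nontrivial state cycles exhibited for $9_{42}$, so this family witnesses that quasipositive modification strictly extends the reach of Khovanov's alternating-Jones test.
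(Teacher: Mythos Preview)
Your proposal is correct and matches the paper's approach: the paper gives no explicit proof for this corollary, treating it as an immediate consequence of Theorem~\ref{thm-postwist-jones-alt} together with Khovanov's criterion that H-thin knots have alternating Jones polynomial. Your additional remarks on parity and on the thickness coming from Theorems~\ref{lifting-q-p} and~\ref{lifting-lee} are accurate elaborations, though more than the paper itself provides.
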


\begin{cor}
$\Span(V(K_{2n})) = 2n+6$
\end{cor}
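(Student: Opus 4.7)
The plan is to read this off directly from the two coefficient claims established inside the proof of Theorem \ref{thm-postwist-jones-alt}. Claim \ref{claim-leadingpower} gives the leading power of $P(K_m)$ as $3(m+2)$ with leading coefficient $+1$, and Claim \ref{claim-trailingpower} gives the trailing power as $m-6$ with nonzero trailing coefficient $(-1)^m$. Specializing to $m = 2n$, the leading power of $P(K_{2n})$ is $6n+6$ and the trailing power is $2n-6$, both realized by nonzero coefficients.

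Next I would invoke the renormalization $P(K_m)(x) = V(K_m)(x^2)$. Under this substitution, every power of $x$ appearing in $P(K_{2n})$ corresponds to a power of $q$ in $V(K_{2n})$ equal to half of it. In particular, the leading power of $V(K_{2n})$ is $3n+3$ and the trailing power is $n-3$, each carrying a nonzero coefficient inherited from the corresponding claim. Therefore
\begin{equation*}
\Span(V(K_{2n})) = (3n+3) - (n-3) = 2n+6,
\end{equation*}
which is exactly the asserted formula.

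There is no real obstacle here: the corollary is just a bookkeeping consequence of the leading and trailing power computations done earlier. The only minor point to keep straight is the factor of $2$ between the variable in $P$ and the variable in $V$, which must be applied to both endpoints of the span.
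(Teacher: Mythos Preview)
Your proof is correct and follows essentially the same approach as the paper: both read off the leading power $6n+6$ and trailing power $2n-6$ of $P(K_{2n})$ from Claims \ref{claim-leadingpower} and \ref{claim-trailingpower}, then halve the resulting span via the substitution $P(x)=V(x^2)$ to obtain $2n+6$.
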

\begin{proof}
In terms of $x$ coefficients, we know the leading power of $P(K_{2n})$ is $6n +6$ and the trailing coefficient is $2n-6$, so $\Span(P(K_{2n})) = 4n+12$.  It follows from the change of variables that $\Span(V(K_{2n})) = 2n+6$.
\end{proof}

\begin{thm}
For each $n$, $K_{2n}$ is not an adequate knot.
\end{thm}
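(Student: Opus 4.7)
The plan is to argue by contradiction, combining the Kauffman--Murasugi--Thistlethwaite span bound with the extremal structure of Khovanov homology for $+$-adequate diagrams.

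Suppose $K_{2n}$ admits an adequate diagram $D^*$; after removing nugatory crossings we may assume $D^*$ is reduced, with positive/negative crossing counts $n_\pm^*$ and $s_0^*$ loops in its all-zero state. The Kauffman--Murasugi--Thistlethwaite theorem (the span of the Jones polynomial equals the crossing number of any reduced adequate diagram) combined with the preceding corollary gives $c(D^*) = n_+^* + n_-^* = \Span(V(K_{2n})) = 2n+6$.

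The original $(2n+9)$-crossing diagram for $K_{2n}$ is $+$-adequate: its all-zero state is the all-zero state of $9_{42}$ (with $s_0 = 4$, visible in Figure \ref{figure-9_42-sigma0}) together with $2n$ additional $0$-traces between two braid-parallel arcs, and these arcs remain in distinct loops after $0$-smoothing of positive crossings, so all the new traces are mergetraces. Hence $\alpha_0$ of Example \ref{sigma0} is a nontrivial state cycle at bigrading $(-4,\, 2n-7)$. Using the inequality $s_\sigma - h(\sigma) \le s_0$ (each $1$-smoothing step either preserves $s-h$ or drops it by $2$, and the first step out of the all-zero state of a $+$-adequate diagram is forced to drop it since the relevant $0$-trace is a mergetrace), I would verify that $\alpha_0$ is the \emph{unique} chain generator realizing the minimum quantum grading. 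Consequently $Kh(K_{2n})$ at $q_{\min} = 2n-7$ has rank $1$ and is supported only in homological grading $t = -4$.

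Applying the same analysis to the $+$-adequate diagram $D^*$ shows that its own $\alpha_0^*$ is the unique minimum-$q$ generator, at bigrading $(-n_-^*,\, -s_0^* + n_+^* - 2n_-^*)$. Khovanov invariance forces this to agree with $(-4,\, 2n-7)$, giving $n_-^* = 4$. Together with $n_+^* + n_-^* = 2n+6$, this yields $n_+^* = 2n+2$ and then $s_0^* = n_+^* - 2n_-^* + 7 - 2n = 1$. But $s_0^* = 1$ with $c(D^*) = 2n+6 \ge 6$ crossings forces every $0$-trace of $D^*$ to join the unique loop of the all-zero state to itself, i.e., to be a pinchtrace; this contradicts the $+$-adequacy of $D^*$, completing the argument.

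The main technical subtlety is the uniqueness of the minimum-$q$ chain generator for $+$-adequate diagrams (used to pin down not just $q_{\min}$ but the homological grading at which it is achieved); once that is established the remainder is purely arithmetic plus a direct appeal to the definition of adequacy.
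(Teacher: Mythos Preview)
Your argument is correct and takes a genuinely different route from the paper's. The paper works with the two-variable Kauffman polynomial: it sets up a skein recursion for $\Lambda(K_n)$ (identifying the $1$-smoothing of an added twist as a twisted mirror of $8_{19}$), shows by induction that the extremal coefficient of $F(K_n)$ at maximal $l-m$ is $-za^{2n+3}$ for $n\ge 2$, and then invokes Stoimenow's criterion that a negative extremal coefficient obstructs $-$-adequacy. Your argument stays entirely in the Jones/Khovanov world: you combine the preceding span corollary with the Kauffman--Murasugi--Thistlethwaite theorem to pin down the crossing number of a hypothetical reduced adequate diagram, and then use the uniqueness of the minimum-$q$ chain generator in a $+$-adequate diagram (which is indeed the all-$v_-$ enhancement of $\sigma_0$, by the monotonicity of $s_\sigma - h(\sigma)$ you sketch) to read off $n_-^*$ and $s_0^*$ exactly, forcing $s_0^*=1$ and contradicting $+$-adequacy. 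Your approach is lighter and more in keeping with the paper's Khovanov-theoretic theme, and it avoids the Kauffman polynomial entirely; on the other hand, the paper's computation actually establishes the stronger fact that every $K_n$ (even or odd) fails to be $-$-adequate, and it does not depend on the alternating-Jones analysis that produced the span corollary. A minor remark: an adequate diagram is automatically reduced (a nugatory crossing produces a pinchtrace in one of the two extreme states), so your reduction step is already free.
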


\begin{proof}
Since the Jones polynomial of these knots begins and ends with $\pm 1$, we need to analyze the Kauffman polynomial to determine adequacy conditions.  What we will show is that $K_{2n}$ is not - adequate, and hence admits no adequate diagram.

Following the conventions of Thistlethwaite and Stoimenow, the Kauffman polynomial we consider will differ from that listed on Bar-Natan's Knot Atlas by the substitution $a \rightarrow a^{-1}$.  Recall that the Kauffman polynomial $F(K)(a, z)$ is defined by the following relations, where $D$ is the chosen diagram for $K$, $D_+$ is a diagram where a circled crossing is positive, $D_-$ is that same diagram with the crossing replaced by a negative crossing, $D_0$ replaces the crossing by a 0-smoothing, and $D_1$ replaces it by a 1-smoothing:

\begin{align}
F(K)(a, z) &=  a^{w(D)} \Lambda(D) \\
\Lambda(D_+) + \Lambda(D_-) &= z ( \Lambda(D_0) + \Lambda(D_1)) \label{lambda-recursion}\\
\Lambda(\text{positive RI twist}) &= a^{-1} \Lambda(|)\\
\Lambda(\text{negative RI twist}) &= a \Lambda(|) \\
\Lambda(\bigcirc) &= 1
\end{align}

Looking over the ingredients of \eqref{lambda-recursion}, we know that if $D_+$ focuses on one of the positive half-twists of $K_{n}$, then as with the Jones recursion, $D_-$ is $K_{n-2}$, and $D_0$ is $K_{n-1}$.  $D_1$ is the new quantity to understand, but it turns out that 1-smoothing the leftmost half-twist results in the mirror of $8_{19}$ with $n$ negative Reidemeister I twists added.  This leads to the following $\Lambda$ recursion:

\begin{equation}
\Lambda(K_n) = - \Lambda(K_{n-2}) + z \Lambda(K_{n-1}) + z a^{n} \Lambda(8_{19}!)
\label{recursion-k_n-lambda}
\end{equation}

The writhe of our diagram for $K_{n}$ is easily seen to be $1+n$, as $9_{42}$ has 4 negative crossings, and 5 positive crossings in its usual diagram.  We will be concerned with coefficients of $F(D)$, since this is a knot invariant and can give obstructions to the knot being - adequate.  So, translating \eqref{recursion-k_n-lambda} into the Kauffman polynomial, we obtain:

\begin{equation}
F(K_n) = - a^{1+n} \Lambda(K_{n-2}) + z a^{1+n} \Lambda(K_{n-1}) + z a^{1+2n} \Lambda(8_{19}!)
\label{recursion-k_n-kauffman}
\end{equation}

Let $[F(D)]_{(m, l)}$ denote the coefficient of $z^m a^l$ in $F(D)$.  By (2.10) of Stoimenow \cite{stoimenow-adequate},  if $l-m$ is the maximum integer so that $[F(D)]_{(m, l)}$ is nonzero, then if any such coefficient is negative, $K$ cannot be - adequate (note that Stoimenow's A-semiadequate corresponds to our convention for - adequate).  So, for the recursion, we just need to see how these maximal terms carry over for each term.

For our base case, KnotTheory' ~\cite{bn-knottheory} can be used to show that $F(K_0)$ is given by:

\begin{align*}
F(K_0) =& a^{-1} z^7 + z^7 a + a^-2 z^6 + z^6 a^2 + 2 z^6 - 5 a^{-1} z^5 - 5 z^5 a - 5 a^{-2} z^4 \\
& - 5 z^4 a^2 - 10 z^4 + 6 a^{-1} z^3 + 6 z^3 a + 6 a^{-2} z^2 + 6 z^2 a^2 + 12 z^2 - 2 a^{-1} z \\
&- 2 z a - 2 a^{-2} - 2 a^2 - 3
\end{align*}

So, the maximum $l-m$ term here is $-2 a^2$, and the knot is not - adequate.  The writhe for the standard diagram is $+1$, so dividing by $a$, one gets that the maximum $l-m$ term for $\Lambda(K_0)$ is $-2a$.  This will prove useful for the recursion.

Next, the Kauffman bracket for $K_1$ is:

\begin{align*}
F(K_1) =& 3 + 3 a^2 + a^4 - 2 a^{-1} z^{-1} - 3 a z^{-1} - a^3 z^{-1} + 8 a^{-1} z + 15 a z \\
&+ 8 a^3 z + a^5 z - 2 z^2 - 7 a^2 z^2 - 5 a^4 z^2 - 11 a^{-1} z^3 - 
 27 a z^3 - 16 a^3 z^3\\
&- 5 z^4 + 5 a^4 z^4 + 6 a^{-1} z^5 + 17 a z^5 + 
 11 a^3 z^5 + 5 z^6 + 4 a^2 z^6 - a^4 z^6\\
&- a^{-1} z^7 - 3 a z^7 - 2 a^3 z^7 - z^8 - a^2 z^8
\end{align*}

The maximum $l-m$ terms are $a^4 - a^3 z^{-1} + a^5 z$.  One of the terms is negative, so this link is also not - adequate.  The corresponding maximum $l-m$ terms from $\Lambda(K_1)$ are given by $a^2 - a z^-1 + a^3 z$. 

For the recursion, one can calculate that $\Lambda(8_{19}!)$ is:

\begin{align*}
\Lambda(8_{19}!) =&-5 - 5 a^{-2} - a^2 + 5 a^{-1} z + 5 a z + 10 z^2 + 10 a^{-2} z^2 - 
 5 a^{-1} z^3 \\
&- 5 a z^3 - 6 z^4 - 6 a^{-2} z^4 + a^{-1} z^5 + a z^5 + z^6 + a^{-2} z^6
\end{align*}

Thus, the term for which $l-m$ is maximal is $-a^2$, which has a negative coefficient.  We can now use these values to examine how the maximal $l-m$ terms behave under the recursion, for both $\Lambda$ and $F$:

\begin{claim}
For $n \geq 2$, the maximal $l-m$ term for $F(K_n)$ is $-z a^{2n+3}$, and the maximal $l-m$ term for $\Lambda(K_n)$ is $-z a^{n+2}$.
\end{claim}

\begin{proof}
Proof of the claim is a simple induction, using the recursion and the base values established above.  For our base case $n=2$, the recursion tells us that:

\begin{align*}
\max_{l-m} F(K_2) &\geq -a^3 \max_{l-m} \Lambda(K_0) + z a^3 \max_{l-m} \Lambda(K_1) + z a^5 \max_{l-m} \Lambda(8_{19}!) \\
&= -a^3(-2a) + z a^3 (za^3 - az^{-1} + a^2) + z a^5 (-a^2)\\
&= 2a^4 + z^2 a^6 - a^4 + z a^5 - z a^7 
\end{align*}

So, the maximum $l-m$ term for $F(K_2)$ is $-za^7$, and dividing by the writhe, the maximum $l-m$ term for $\Lambda(K_2)$ is $-za^4$, which matches the claim.  We need to analyze $K_3$ also before the induction:

\begin{align*}
\max_{l-m} F(K_3) &\geq -a^4 \max_{l-m} \Lambda(K_1) + z a^4 \max_{l-m} \Lambda(K_2) + za^7 \max_{l-m} \Lambda(8_{19}!) \\
&= -a^4 (a^2 - az^{-1} + a^3 z) + za^4 (-za^4 ) + za^7 (-a^2) \\
&= -a^6 + a^5 z^{-1} - a^7 z - z^2 a^8 - z a^9
\end{align*}

Again, the $-za^9$ term is maximal for $F(K_3)$, and correspondingly $-za^5$ is maximal for $\Lambda(K_3)$.  We can now catch the rest of the cases by induction:

\begin{align*}
\max_{l-m} F(K_n) &\geq -a^{1+n} \max_{l-m} \Lambda(K_{n-2}) + z a^{1+n} \max_{l-m} \Lambda(K_{n-1}) - z a^{2n+3} \\
&= -a^{1+n}(-za^{n}) + za^{1+n}(-za^{n+1}) - za^{2n+3} \\
&= za^{2n+1} -z^2 a^{2n+2} - z a^{2n+3}
\end{align*}

So, the maximal $l-m$ term for $F(K_n)$ is $-za^{2n+3}$; dividing by the writhe ($1+n$), we get that the maximal $l-m$ term for $\Lambda(K_n)$ is $-za^{n+2}$ as claimed.
\end{proof}

The claim tells us that the maximal $l-m$ term for the Kauffman polynomial of every $K_n$ has a negative coefficient, so the corresponding knot or link cannot admit a - adequate, and hence adequate, diagram.
\end{proof}

\subsection{Setup for modification by a 3-braid}

For a more complicated example of quasipositive modification, we need to find a trio of braid-parallel arcs.  While there is no such trio in the previous diagram, we can do a positive stabilization to get a new + adequate diagram for $9_{42}$ which now has a set of three braid-parallel arcs compatible with the two state cycles.  See Figure \ref{figure-9_42-bp-trio}.

\begin{figure}[ht!]
\centering
\includegraphics{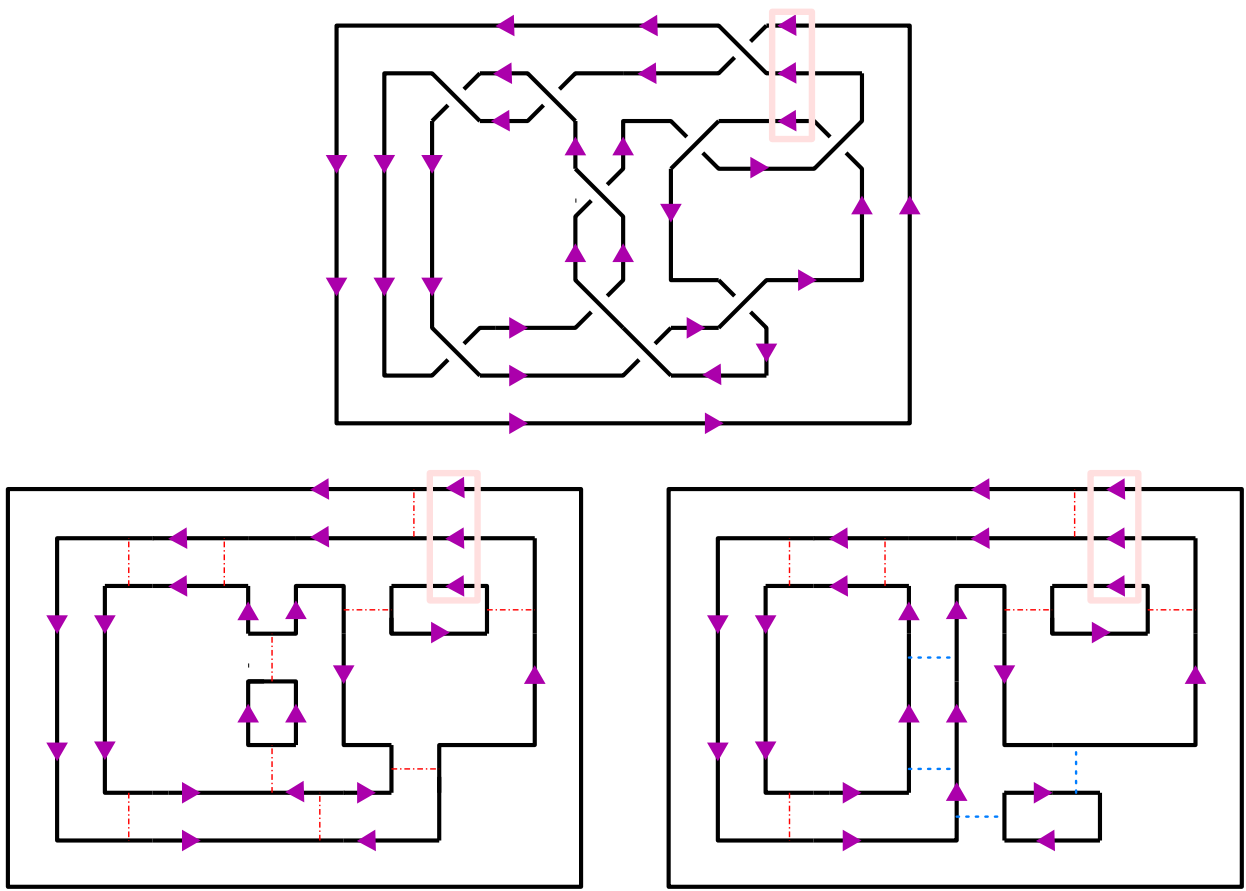}
\caption{Positive stabilization of $9_{42}$ and the new OET states for $\alpha_0$ and $\alpha_s$, with a trio of braid parallel arcs marked.}
\label{figure-9_42-bp-trio}
\end{figure}

Any quasipositive 3-braid can now be glued in for a quasipositive modification.  This gives a lot of variety in kinds of H-thick knots and links one can construct by modification of $9_{42}$; in the next two subsections, we will consider two such families.

\subsection{Conjugating a Positive Crossing}

For another family of thick knots which are prime, not adequate, and have alternating Jones polynomial, we can simply conjugate a positive crossing with a 3-braid word of choice.  For this example, we will conjugate by $ w_1 = \tau_1 \tau_2 \tau_1^{-1}$, to modify by the quasipositive braid word $ q_1 = \tau_1 \tau_2 \tau_1^{-1} \tau_2 \tau_1 \tau_{2}^{-1} \tau_1^{-1}$, choosing the same 3 braid-parallel arcs as in Figure \ref{figure-9_42-bp-trio}.  Note that inserting multiple copies of this braid word consecutively is isotopic by Reidemeister II cancellation to conjugating multiple positive crossings by $w_1$.

\begin{figure}[ht!]
\centering
\includegraphics{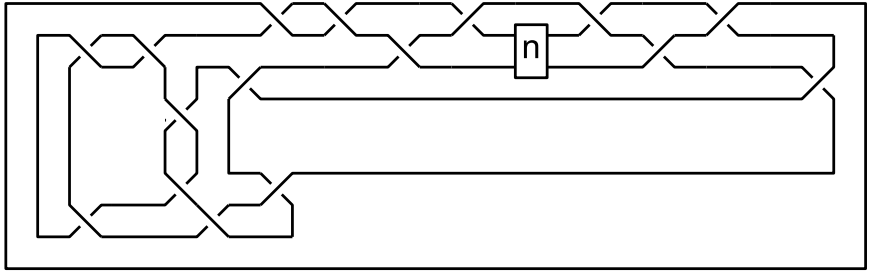}
\caption{Diagram of $K(9_{42}, q_1, n)$; the $n$ represents $n$ positive half-twists.}
\label{figure-9_42-word2}
\end{figure}

So, consider the family formed by successively doing quasipositive modification of $9_{42}$ by the 3-braid word $q_1$, denoted $K(9_{42}, q_1, n)$.  It turns out that this family is closely related to that of Subsection \ref{example-9_42-postwist}, though comparison of the Jones polynomial suggests these families are distinct.

Using SnapPea ~\cite{snappea}, one can again check that the surgery link associated to this positive twisting is hyperbolic.  Using Thurston's hyperbolic Dehn surgery theorem ~\cite{thurston-gt3m}, it follows that all but a finite number of such $K(9_{42}, q_1, n)$ must be hyperbolic.  Hence almost every member is prime.  Next, calculating the Jones polynomial via KnotTheory' ~\cite{bn-knottheory} for the first three members of the family, one sees that their Jones polynomials are alternating:

\begin{align*}
V(K(9_{42},q_1,1)) &= -2 q^{-5/2} + 3 q^{-3/2} - 5 q^{-1/2} + 5 q^{1/2} - 6 q^{3/2} + 
 6 q^{5/2} - 5 q^{7/2} + 4 q^{9/2} - 2 q^{11/2}\\
 &+ q^{13/2} - q^{15/2}\\
V(K(9_{42},q_1,2)) &= 2 q^{-2} - 4 q^{-1} + 7 - 9 q + 10 q^2 - 11 q^3 + 10 q^4 - 8 q^5 + 6 q^6 -  3 q^7 + 2 q^8 - q^9 \\
V(K(9_{42},q_1,3)) &= -2 q^{-3/2} + 4 q^{-1/2} - 8 q^{1/2} + 11 q^{3/2} - 14 q^{5/2} + 
 15 q^{7/2} - 15 q^{9/2} + 13 q^{11/2} - 10 q^{13/2} \\
 &+ 7 q^{15/2} - 4 q^{17/2} + 2 q^{19/2} - q^{21/2} \\
\end{align*}

This pattern holds in general, yielding:

\begin{thm}
The Jones polynomial of $K(9_{42}, q_1, n)$ is alternating, with no gaps.
\end{thm}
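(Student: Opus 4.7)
The plan is to mirror the proof of Theorem~\ref{thm-postwist-jones-alt}. The key observation is that because successive copies of $q_1$ reduce by Reidemeister II to $w_1 \tau_2^n w_1^{-1}$, applying the Jones skein relation to one of the central $\tau_2$ crossings yields exactly the same recursion as in the $K_n$ family: switching a positive $\tau_2$ to a negative one cancels an adjacent $\tau_2$ via Reidemeister II, reducing $n$ by $2$; the oriented smoothing removes one $\tau_2$, reducing $n$ by $1$. Thus I expect to derive
\[
V(K(9_{42}, q_1, n)) = q^2 V(K(9_{42}, q_1, n-2)) + (q^{3/2} - q^{1/2}) V(K(9_{42}, q_1, n-1)).
\]

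First I would pass to the renormalization $P(K)(x) = V(K)(x^2)$, converting this into
\[
P(K(9_{42}, q_1, n)) = x^4 P(K(9_{42}, q_1, n-2)) + (x^3 - x) P(K(9_{42}, q_1, n-1)),
\]
which is identical in form to equation~\eqref{eq-p-recursion}. I would then verify the base of the induction using the three sample Jones polynomials already computed for $n=1,2,3$ together with $P(9_{42})$ as the $n=0$ case: each should have leading coefficient $+1$, a predictable trailing coefficient, signs alternating on powers of the appropriate parity, and no zero coefficients within its span.

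The induction itself would then run essentially verbatim from the earlier proof, since the three auxiliary claims there (about leading powers and coefficients, trailing powers and coefficients, and the shift-by-three magnitude comparison) depend only on the form of the recursion together with a sign-parity analysis, not on the specific base knot. Once these are re-established with the new initial data, the same telescoping absolute-value argument applied to the coefficient-level recursion $a_j^n = a_{j-4}^{n-2} + a_{j-3}^{n-1} - a_{j-1}^{n-1}$ establishes both the alternation and the no-gaps conditions for all $n$.

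The main obstacle is purely computational: confirming that the four base cases $n = 0,1,2,3$ satisfy the precise quantitative inequalities required to seed the induction, most notably the Claim~\ref{claim-shiftbythree}-type magnitude comparison between coefficients of $P(K(9_{42}, q_1, n))$ and $P(K(9_{42}, q_1, n-1))$ differing by three powers. Direct inspection of the polynomials already listed above suggests this holds. A secondary bookkeeping task is to adjust the explicit formulas for the leading and trailing powers---which were $3(n+2)$ and $n-6$ for $K_n$---to match the values dictated by the Jones polynomial of $9_{42}$ and the recursion; these numerical changes do not affect the structure of the argument.
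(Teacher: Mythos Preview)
Your proposal is correct and takes essentially the same approach as the paper: the same recursion, the same renormalization, and the same induction carried over from Theorem~\ref{thm-postwist-jones-alt} with only the initial data changed. One small correction to your stated expectations: in this family the leading coefficient is $-1$ (not $+1$) and the trailing coefficient is $(-1)^n \cdot 2$, with leading power $3n+12$ and trailing power $n-6$; as you already anticipated, this only requires a global sign flip in the parity bookkeeping and does not alter the structure of the argument.
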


\begin{proof}
The proof is almost the same as that of Theorem \ref{thm-postwist-jones-alt}.  Doing the change of variables, the Jones polynomials of the first three members can be renormalized to:

\begin{align*}
P(K(9_{42}, q_1,1)) &= -2 x^{-5} + 3 x^{-3} - 5 x^{-1} + 5 x^{1} - 6 x^{3} + 
 6 x^{5} - 5 x^{7} + 4 x^{9} - 2 x^{11} 
 + x^{13} - x^{15}\\
P(K(9_{42}, q_1,2)) &= 2 x^{-4} - 4 x^{-2} + 7 - 9 x^2 + 10 x^4 - 11 x^6 + 10 x^8 - 8 x^{10} + 6 x^{12}
-  3 x^{14} + 2 x^{16} - x^{18} \\
P(K(9_{42}, q_1,3)) &= -2 x^{-3} + 4 x^{-1} - 8 x^{1} + 11 x^{3} - 14 x^{5} + 
 15 x^{7} - 15 x^{9} + 13 x^{11} - 10 x^{13} + 7 x^{15} - 4 x^{17}\\
 &+ 2 x^{19} - x^{21} \\
\end{align*}

The only difference of note is the recursion for the leading and trailing powers and coefficients:  in this family, the leading power is $12+3n$ and leading coefficient is $-1$; the trailing power is $n-6$ and trailing coefficient is $(-1)^n * 2$.  Otherwise, the proofs of the corresponding claims from Theorem \ref{thm-postwist-jones-alt} follow without change.
\end{proof}

\begin{cor}
$K(9_{42}, q_1, n)$ is not adequate.
\end{cor}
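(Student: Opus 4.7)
The plan is to mimic the Kauffman polynomial strategy used to show $K_{2n}$ is not adequate. By (2.10) of Stoimenow \cite{stoimenow-adequate}, it suffices to identify the maximum $l-m$ term in the Kauffman polynomial $F(K(9_{42}, q_1, n))$ and exhibit a negative coefficient there; this rules out $-$ adequacy, hence adequacy.

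First I would set up an inductive recursion for the Kauffman bracket $\Lambda(K(9_{42}, q_1, n))$ analogous to (\ref{recursion-k_n-lambda}). I would apply the skein relation $\Lambda(D_+) + \Lambda(D_-) = z(\Lambda(D_0) + \Lambda(D_1))$ at the central positive crossing of the leftmost inserted copy of $q_1 = w_1 \tau_2 w_1^{-1}$, where $w_1 = \tau_1 \tau_2 \tau_1^{-1}$. Replacing this central $\tau_2$ by $\tau_2^{-1}$ makes the word $w_1 \tau_2^{-1} w_1^{-1}$ trivial after Reidemeister~II cancellations inside $w_1 w_1^{-1}$, so $D_-$ is isotopic to $K(9_{42}, q_1, n-1)$. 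The smoothings $D_0$ and $D_1$ leave diagrams consisting of the remaining $(n-1)$ copies of $q_1$ attached to a controllable base piece, which I would identify (I expect something of $8_{19}$ flavor, as in the earlier proof) by a direct KnotTheory' computation.

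With the recursion in hand, the second step is to compute $F(K(9_{42}, q_1, n))$ for small $n$ using KnotTheory' to pin down the maximum $l-m$ terms of $\Lambda(K(9_{42}, q_1, 0))$ and $\Lambda(K(9_{42}, q_1, 1))$, together with the corresponding terms of the base diagrams from $D_0$ and $D_1$. Then I would induct as in the earlier claim: assume the maximum $l-m$ term of $\Lambda(K(9_{42}, q_1, n-1))$ has the form $-z a^{g(n-1)}$ for an explicit linear $g$, plug into the recursion, verify that the $\Lambda(D_0)$ and $\Lambda(D_1)$ contributions sit at strictly lower $l-m$ degree (or that any competing top-degree terms cancel against each other), and confirm that the top term of $\Lambda(K(9_{42}, q_1, n))$ is $-z a^{g(n)}$. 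Normalizing by $a^{w(D)}$ then yields the desired negative leading $l-m$ coefficient of $F(K(9_{42}, q_1, n))$, completing Stoimenow's criterion.

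The main obstacle will be the bookkeeping for the smoothings $D_0$ and $D_1$: because the chosen crossing sits inside a length-seven braid word rather than being a solitary half-twist, these two nontrivial smoothings will not collapse as cleanly as they did in the $K_n$ case, and I expect to need explicit braid-relation manipulations (or perhaps a joint induction across a small collection of parallel families) before the top $l-m$ term of the recursion can be extracted in closed form. Once that bookkeeping is under control, the sign analysis parallels the $K_{2n}$ argument almost line for line, and the corollary follows.
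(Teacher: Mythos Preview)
Your proposed Kauffman-polynomial induction is in principle a valid strategy, but it is enormously harder than what is actually needed here, and you have overlooked the one-line argument the paper uses. In the preceding theorem you already established that the trailing coefficient of $V(K(9_{42},q_1,n))$ is $(-1)^n\cdot 2$. Since an adequate link must have both extreme Jones coefficients equal to $\pm 1$ (Proposition~1 of Lickorish--Thistlethwaite~\cite{lickthistle-poly}), non-adequacy follows immediately. The reason the $K_{2n}$ family required the full Kauffman-polynomial analysis was precisely that its trailing Jones coefficient \emph{was} $\pm 1$, so the Lickorish--Thistlethwaite obstruction was unavailable there; in the $q_1$ family that obstruction is available and does all the work.

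Your route would presumably succeed if carried out, but the bookkeeping you flag as ``the main obstacle'' is genuinely nontrivial (the $D_0$ and $D_1$ smoothings inside the conjugated word do not collapse to a single fixed link the way they did for a bare half-twist), and there is no reason to take it on when the Jones polynomial already settles the matter.
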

\begin{proof}
Because the trailing coefficient is $(-1)^n * 2$, $K(9_{42}, q_1, n)$ admits no adequate diagram by Proposition 1 of Lickorish and Thistlethwaite ~\cite{lickthistle-poly}.
\end{proof}

So, $ K(9_{42},q_1,2n)$ gives another infinite family of H-thick knots which is not detected by Khovanov's thickness criterion.  Presumably doing other such conjugations of a positive crossing will yield more families of this kind.

\subsection{Modification by the Mirror of $8_{20}$}

Now we will consider the case of gluing in multiple copies of the quasipositive mirror of $8_{20}$, which will form a different family of H-thick knots.  Call such a knot with $n$ copies of the mirror of $8_{20}$ glued in via this quasipositive modification $K(9_{42}, 8_{20}, n)$.  In actuality, this will not always return a knot:  in the case that $n \equiv 2 \pmod{3}$, the permutation of the strands gives a 3 component link, but otherwise $K(9_{42}, 8_{20}, n)$ will be a knot.

\begin{figure}[ht!]
\centering
\includegraphics{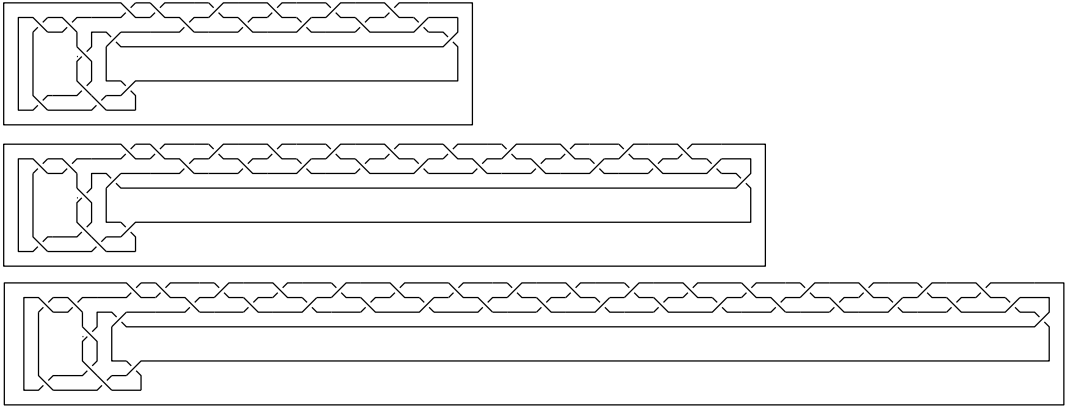}
\caption{First three members of family $K(9_{42}, 8_{20}, n)$.  Notice that the second member is a link, while the first and third are knots.}
\label{figure-9_42-qp8_20-trio}
\end{figure}

An interesting pattern in this family of H-thick links is that the width of successive members seems to be increasing.  Using JavaKh-v2, an update of Jeremy Green's program by Scott Morrison written for ~\cite{fgmw-spc4}, one can calculate that the width of the first four members is 3, 4, 5, and 6 respectively.  Furthermore, this increasing width is not coming from some well-hidden connect-sum operation hiding in the quasipositive modification:

\begin{prop}
For $n = \{ 1,2,3,4 \}$, $K(9_{42}, 8_{20}, n)$ is a prime link.  In particular, it is \emph{not} a nontrivial connect sum with a thick link as a summand.  
\end{prop}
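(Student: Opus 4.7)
The natural approach, following the paper's established pattern with SnapPea, is to verify that each of the four link complements $S^3 \setminus K(9_{42}, 8_{20}, n)$ for $n \in \{1,2,3,4\}$ admits a complete hyperbolic structure of finite volume, and then appeal to the standard fact that hyperbolic link complements are prime. Concretely, I would feed the four explicit diagrams (as in Figure \ref{figure-9_42-qp8_20-trio} together with the $n=4$ analogue) into SnapPea, triangulate each complement, and have SnapPea compute an approximate hyperbolic structure. Since this proposition claims primality for only four specific links rather than an infinite family, Thurston's hyperbolic Dehn surgery theorem is not needed here; one simply performs four independent computations.

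The reason hyperbolicity suffices is the following. A complete finite-volume hyperbolic 3-manifold is irreducible and atoroidal, and has no essential annuli other than boundary-parallel ones. If a link $L = L_1 \# L_2$ were a nontrivial connect sum, the 2-sphere realizing the decomposition would intersect $L$ in two points and thus yield a properly embedded essential annulus in the link complement (the ``swallow--follow'' annulus for the connect-sum sphere), contradicting hyperbolicity. Similarly, a split link would give an essential 2-sphere in the complement. Hence hyperbolicity rules out both splittings and connect-sum decompositions, in particular excluding the possibility that the link is a nontrivial connect sum having a thick link as a summand.

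The main obstacle is that SnapPea's hyperbolic structures are computed numerically and so, strictly speaking, only give strong evidence rather than a rigorous proof. To upgrade the computation to a proof I would use a verified computation, for example via Harrison--Hoffman--Kawauchi--Nakagawa--Odani--Ozeki--Takato (HIKMOT) or SnapPy's \texttt{verify} routines, which produce an interval-arithmetic certificate that Newton's method applied to the gluing equations converges to an actual solution within numerical tolerances. Running this verification on each of the four triangulations produces a rigorous proof of hyperbolicity, and hence of primality, for $n = 1, 2, 3, 4$.

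Finally, I would remark that this argument is entirely parallel to the primality arguments already given in Subsections \ref{example-9_42-postwist} and in the $q_1$-family, the only difference being that here one does not invoke Thurston's Dehn surgery theorem to pass from finitely many verified cases to an infinite family; instead, one verifies each of the four cases in the proposition individually.
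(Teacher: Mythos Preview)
Your approach is sound in principle but differs substantially from the paper's. The paper does \emph{not} verify hyperbolicity of these four specific complements; instead it uses prime tangle decomposition following Lickorish. Each link is cut into two tangles: Tangle~A is a fixed small tangle (a twisted variant of Lickorish's example~(a), shown prime because one arc is a knotted spanning arc yet a closure by the trivial tangle is unknotted), and Tangle~B is the complicated remainder. Primality of Tangle~B is established by checking that both arcs are individually unknotted and that closing by a trivial tangle yields a link of bridge number greater than~$2$. For $n=1$ that closure is identified as the mirror of $8_{21}$; for $n=2,3,4$ the paper computes the Khovanov homology of the closure, finds it H-thick (width $3,4,5$), and concludes it cannot be $2$-bridge since $2$-bridge links are alternating and hence H-thin.

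Your route has two practical weaknesses relative to the paper's. First, it is contingent on all four complements actually being hyperbolic; you have not carried out the verification, and for the $3$-component case $n=2$ it is not \emph{a priori} obvious that no essential torus appears. The tangle argument needs no such hypothesis. Second, the paper's method stays within the toolkit already developed (Khovanov homology computations), giving a pleasing internal consistency, whereas yours imports verified hyperbolic-structure software that the paper nowhere relies on for individual links---its earlier SnapPea uses are only to certify hyperbolicity of a \emph{surgery parent}, after which Thurston's theorem handles all but finitely many fillings without needing rigorous numerics for any specific one. That said, if the verified computations go through, your argument is shorter and perfectly valid.
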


\begin{proof}
The method of proof is to use prime tangle decomposition, following Lickorish ~\cite{lickorish-prime}.  We will first examine the case of $K(9_{42}, 8_{20}, 1)$; primeness for the other links will follow the same general pattern.

\begin{figure}[ht!]
\centering
\includegraphics{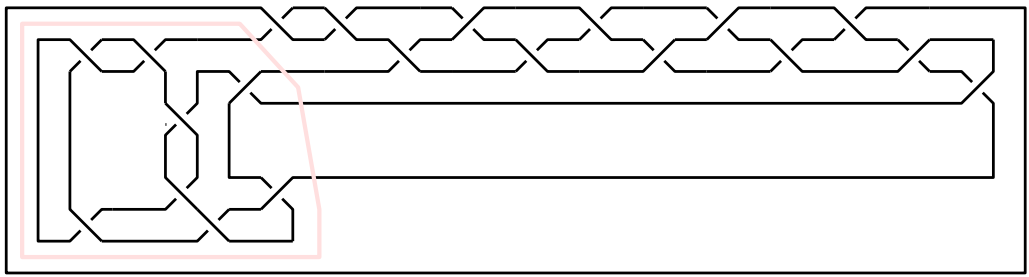}
\caption{$K(9_{42}, 8_{20}, 1)$ broken into prime tangles.  The enclosed tangle is Tangle A; the exterior is Tangle B.}
\label{figure-9_42-qp-prime-decomposition}
\end{figure}

Figure \ref{figure-9_42-qp-prime-decomposition} illustrates a breakdown of $K(9_{42}, 8_{20}, 1)$ into two tangles.  If we can show that both tangles are prime, then the original knot is prime by Theorem 1 of ~\cite{lickorish-prime}.

Tangle A is a variant on prime tangle (a) of Lickorish, which just has two extra positive half-twists added.  Lickorish's same argument shows Tangle A is prime - we add the untangle as shown in Figure \ref{figure-9_42-qp-tangle-a}, and get an unknot with no nontrivial summand.  As in (a), Tangle A itself is not the untangle, because one of its arcs is a knotted spanning arc of the ball; primeness of Tangle A follows.

\begin{figure}[ht!]
\centering
\includegraphics{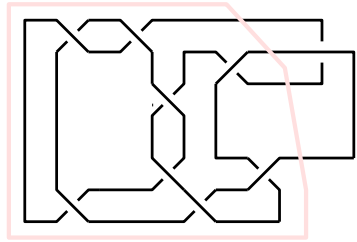}
\caption{If we add the untangle to Tangle A as shown here, we get an unknot with no nontrivial summand.  To see the unknotting, look at rightmost part and just start undoing twists via Reidemeister I.}
\label{figure-9_42-qp-tangle-a}
\end{figure}

\begin{figure}[ht!]
\centering
\includegraphics{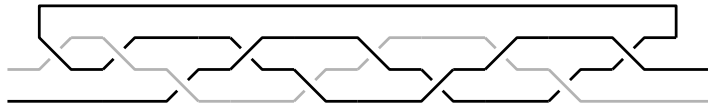}
\caption{An isotoped version of Tangle B.  The grey and black parts are both unknotted arcs, when viewed by themselves.}
\label{figure-9_42-qp-tangle-b-arcs}
\end{figure}

Primeness of Tangle B follows Lickorish's example (c).  Each of the two arcs of the tangle are unknotted, seen by examining Figure \ref{figure-9_42-qp-tangle-b-arcs}.  And, by adding an untangle to Tangle B as shown in Figure \ref{figure-9_42-qp-tangle-b-3bridge}, we get a 3-bridge knot, the mirror of $8_{21}$ (the Jones polynomial is enough to determine this identification).  Because the bridge number of this composition is higher than 2, Tangle B could not have been untangled.  So, Tangle B is prime.

\begin{figure}[ht!]
\centering
\includegraphics{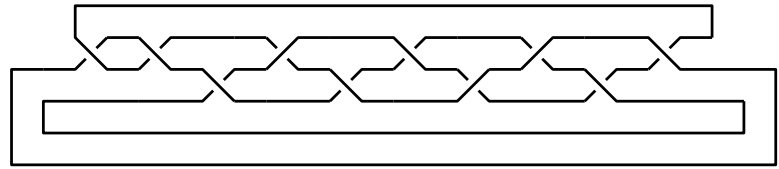}
\caption{Tangle B composed with an untangle.  This is a 10 crossing diagram, and only one knot of 10 crossings or less matches its Jones polynomial, so it is the mirror of $8_{21}$.}
\label{figure-9_42-qp-tangle-b-3bridge}
\end{figure}

Now let's consider similar tangle decompositions to prove the primeness in the cases $n=2,3,4$.  The idea is to make Tangle A the same, and consider what happens to the more complicated Tangle B cases.  The only new thing one must check is that adding an untangle to the new variants of Tangle B are not 2-bridge.  To show this, one can calculate the Khovanov homology for these links.  In each case, the Khovanov homology is H-thick, of width 3, 4, and 5 respectively.  But, 2-bridge links have alternating diagrams, which means their Khovanov homology must be H-thin.  So, the bridge index of these two links is higher than 2, and the new Tangle B's are prime, as desired.  
\end{proof}

\begin{remark}
Note that via this prime tangle decomposition, we end up using H-thickness of some subtangles to prove these knots and links are prime.  A better understanding of why the width seems to increase when adding the mirror of $8_{20}$ would also give a proof that this full family of links is prime.
\end{remark}

On the other hand, these links have nonalternating Jones polynomial, so their thickness (but not increasing width) can be detected by Khovanov's thickness criteria.  For reference, the Jones polynomials of the first four in this family are listed below, calculated using KnotTheory' ~\cite{bn-knottheory}:

\begin{align*}
V(K(9_{42}, 8_{20}, 1)) &= 2 q^{-2} - 3q^{-1} + 5 - 5 q + 4 q^2 - 4 q^3 + 2 q^4 - q^5 + q^7 \\
V(K(9_{42}, 8_{20}, 2)) &= - q^{-3} + 3q^{-2} - 5q^{-1} + 9 - 11 q + 16 q^2 - 17 q^3 + 19 q^4 - 17 q^5\\
 &+ 14 q^6 - 10 q^7 + 6 q^8 - 2 q^9 + q^{11} - q^{12}\\
V(K(9_{42}, 8_{20}, 3)) &= q^{-4} - 4q^{-3} + 10q^{-2} - 20q^{-1} + 34 - 51 q + 69 q^2 - 85 q^3 + 98 q^4\\
&- 104 q^5 + 105 q^6 - 97 q^7 + 84 q^8 - 67 q^9 + 48 q^{10} - 31 q^{11} + 16 q^{12} - 6 q^{13}\\
&+ 2 q^{15} - 2 q^{16} + q^{17} \\
V(K(9_{42}, 8_{20}, 4)) &= - q^{-5} + 5q^{-4} - 15q^{-3} + 35q^{-2} - 68q^{-1} + 118 - 183 q + 264 q^2 - 353 q^3 \\
&+ 444 q^4 - 526 q^5 + 587 q^6 - 620 q^7 + 619 q^8 - 585 q^9 + 522 q^{10} - 438 q^{11} + 343 q^{12} \\
&- 250 q^{13} + 166 q^{14} - 98 q^{15} + 49 q^{16} - 17 q^{17} + q^{18} + 5 q^{19} - 5 q^{20} + 3 q^{21} - q^{22} \\
\end{align*}

For completeness, one might ask whether these links have adequate diagrams.  Unfortunately, the Kauffman polynomial, which provides most of the obstructions to being adequate, is too computationally intensive to calculate even for the 20 crossing diagram of the $n=1$ case.  For $n=1$, the last coefficient of the Jones polynomial is not $\pm 1$, guaranteeing there is no adequate diagram for this first knot by Proposition 1 of Lickorish and Thistlethwaite ~\cite{lickthistle-poly}.  But, subsequent Jones polynomials for the next 3 cases all have first and last Jones coefficient $\pm 1$, leaving only the Kauffman polynomial obstructions.

\begin{conj}
For each natural number $n$, $K(9_{42}, 8_{20}, n)$ admits no adequate diagram.
\end{conj}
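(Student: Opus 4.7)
The plan is to parallel the Kauffman polynomial obstruction argument already carried out in Subsection \ref{example-9_42-postwist} for the family $K_{2n}$. Since an adequate diagram is by definition both $+$ adequate and $-$ adequate, it suffices to rule out $-$adequacy, which by Stoimenow's criterion (equation (2.10) of \cite{stoimenow-adequate}) is obstructed as soon as the coefficient of the maximum $l-m$ term of $F(K)$ is negative. The $n=1$ case is already handled, since its trailing Jones coefficient fails to be $\pm 1$; so the task is to establish the Kauffman obstruction for all $n \geq 2$ by induction.

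I would set up the induction by skeining at one of the two central positive crossings of the leftmost appended copy of the $8_{20}$ braid, using $\Lambda(D_+)+\Lambda(D_-)=z(\Lambda(D_0)+\Lambda(D_1))$. The key algebraic feature of quasipositivity is that for a conjugated factor $\omega \tau_{i}\omega^{-1}$, the oriented smoothing $\tau_{i}\mapsto \mathrm{id}$ permits the $\omega,\omega^{-1}$ factors to cancel by Reidemeister II. Thus $D_0$ at a central positive crossing reduces to $K(9_{42}, 8_{20}, n)$ with that single quasipositive factor excised, and performing this skein at both central positive crossings of a single $8_{20}$ block produces $K(9_{42},8_{20},n-1)$. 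The $D_-$ branch replaces the factor by $\omega \tau_i^{-1}\omega^{-1}$, and the $D_1$ branch yields an auxiliary diagram outside the family. The induction hypothesis would assert that the maximum $l-m$ term of $\Lambda(K(9_{42}, 8_{20}, n))$ is a single negative monomial whose $a$-power grows linearly in $n$; the induction step tracks the leading $l-m$ contributions of $-a^w \Lambda(D_-)$, $z a^w \Lambda(D_0)$, and $z a^w \Lambda(D_1)$, where $w$ is the writhe of $D_+$, and would show that the $z a^w\Lambda(D_0)$ term dominates and inherits the negative sign.

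The main obstacle will be controlling the $D_1$ side branch. In the $K_{2n}$ proof, the 1-smoothing of a positive half-twist collapsed to a familiar diagram (the mirror of $8_{19}$ with Reidemeister I twists), giving a closed-form $\Lambda$ that could be manipulated explicitly. Here, 1-smoothing a crossing inside $8_{20}$ does not simplify to any standard link. One approach is to iteratively skein each $D_1$ side term at its own remaining central positive crossings, producing an expanding recursion tree, and to show by writhe accounting that every leaf of the tree contributes to $l-m$ strictly below the leading degree of the main $D_0$ branch, so that no cancellation can destroy the negative leading coefficient. A second route is to compute the maximum $l-m$ terms of the finitely many side link types that arise once, by explicit Kauffman calculation for small instances, and then close the induction. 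Either way, the crux is a uniform bound showing the $D_1$ branch is strictly subdominant; once this is in hand, the inductive sign analysis proceeds exactly as in the proof for $K_n$.
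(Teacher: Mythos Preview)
The statement you are attempting to prove is labelled a \emph{Conjecture} in the paper, and the paper gives no proof of it. Immediately preceding the conjecture, the author explains why: the Kauffman polynomial obstruction ``is too computationally intensive to calculate even for the 20 crossing diagram of the $n=1$ case,'' and for $n \geq 2$ the Jones polynomial endpoints are $\pm 1$, so only the Kauffman obstruction remains. In other words, there is no proof in the paper to compare your proposal against; the author left this open precisely because the Kauffman route was not tractable.

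Your proposal is structurally sensible --- it mirrors the successful $K_n$ argument --- but it runs directly into the obstacle the paper already flags. To launch the induction you need, at minimum, the maximal $l-m$ term of $\Lambda(K(9_{42},8_{20},n))$ for the first one or two values of $n$, together with the maximal $l-m$ terms of all the $D_-$ and $D_1$ side links that arise when skeining inside an $8_{20}$ block. These are computations on diagrams of 20 or more crossings, which the author states are out of reach. Your ``second route'' (compute the finitely many side link types explicitly) is exactly this blocked computation. Your ``first route'' (iteratively skein the side terms and bound their $l-m$ degree by writhe accounting alone) is more promising in principle, but you have not supplied the actual bound, and there is no reason given to expect the $D_1$ branches to be strictly subdominant --- in the $K_n$ case the $D_1$ term $za^{2n+3}$ was in fact the \emph{dominant} term, not subdominant, and its sign had to be computed directly from $\Lambda(8_{19}!)$. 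So the crux you identify is real, and neither of your suggested resolutions closes it without the very computations the paper could not perform.
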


\subsection{Other Base Knots}

$9_{42}$ is not the only valid base knot for constructing H-thick knots by quasipositive modification:  $10_{132}$, $10_{136}$, and $10_{145}$ all have the same setup of a + adequate diagram, and a ``thick'' state cycle representative in the third diagonal, for a knot of width 3.  Note that for all these knots, $9_{42}$ included, the ``thick'' state cycle representative has bigrading $(0, s-1)$, where $s$ is Rasmussen's $s$ invariant.  Also, in each case, the $s$ invariant is smaller than the signature of the knot.

One can also find examples with a slightly different setup, by direct computation.  $10_{161}$ has a - adequate diagram for its standard minimal diagram, but there is a state cycle representative for its lowermost corner homology class at $(-9, -23)$.  Its width is 3, so even though the diagram is not + adequate, the same methods will give us 2 diagonals from the Lee pairing of this data.  And, there is a third off diagonal state cycle which is nontrivial, at bigrading $(-3,-13)$.  For these state cycles, nontriviality was checked by a Java program written by the author.

In general, though, this program is limited to calculating nontriviality for diagrams of 10 crossings or less, restricting the number of H-thick base knots that can be examined by direct computation.  Better methods of checking nontriviality for state cycles are needed, if one is to extend the selection of base knots for such thick families.

\bibliographystyle{hep}
\bibliography{knotbib}
\end{document}